\newtheorem{thm}{Theorem}[section]
\newtheorem{lem}[thm]{Lemma}
\newtheorem{prop}[thm]{Proposition}
\newtheorem{thmintro}{Theorem}
\theoremstyle{definition}
\newtheorem{defn}[thm]{Definition}
\newtheorem{rem}[thm]{Remark}
\newcommand{\Z}{\mathbb Z}
\newcommand{\Q}{\mathbb Q}
\newcommand{\R}{\mathbb R}
\newcommand{\C}{\mathbb C}
\newcommand{\mc}{\mathcal}
\def\Irr{{\rm Irr}}
\newcommand{\mr}{\mathrm}
\newcommand{\enuma}[1]{\begin{enumerate}[\textup{(}a\textup{)}] {#1} \end{enumerate}}
\newcommand{\nr}{\mathrm{nr}}
\newcommand{\Rep}{\mathrm{Rep}}
\newcommand{\der}{\mathrm{der}}
\newcommand{\Hom}{\mathrm{Hom}}
\newcommand{\End}{\mathrm{End}}
\newcommand{\Gal}{\mathrm{Gal}}
\begin{document}

\title[Standard modules and intertwining operators for $p$-adic groups]
{Standard modules and intertwining operators\\ for reductive $p$-adic groups}
\date{\today}
\subjclass[2010]{20G25, 22E50, 22E30}
\maketitle
\vspace{4mm}

\begin{center}
{\Large Maarten Solleveld} \\[1mm]
IMAPP, Radboud Universiteit Nijmegen\\
Heyendaalseweg 135, 6525AJ Nijmegen, the Netherlands \\
email: m.solleveld@science.ru.nl
\end{center}
\vspace{4mm}

\begin{abstract}
Consider a reductive group $G$ over a non-archimedean local field. The Galois
group Gal$(\C/\Q)$ acts naturally on the category of smooth complex 
$G$-representations. We prove that this action stabilizes the class of 
standard $\C G$-modules. This generalizes and relies on an analogous result
from \cite{KSV} about essentially square-integrable representations.

Other important objects in the proof of our main result are intertwining
operators between parabolically induced $G$-representations, and the 
associated Harish-Chandra $\mu$-functions. We determine an explicit
formula for the $\mu$-function of any irreducible representation of
any Levi subgroup of $G$.
\end{abstract}
\vspace{4mm}

\tableofcontents

\section{Introduction}

This paper is a sequel to \cite{KSV}. That project started with the question:
which classes of representations of reductive $p$-adic groups $G$ are stable
under the action of $\Gal (\C / \Q)$? By default, the representations that we
consider here are smooth and on complex vector spaces. The motivation for such
questions is twofold.

Firstly, it relates to L-functions. One may hope to prove statements of the
kind 
\[
\text{if } L(s,\pi) = 0 \text{ for some } s \in \frac{1}{2} \Z \text{, then }
L (s, \gamma \cdot \pi) = 0 \text{ for } \gamma \in \Gal (\C / \Q).
\]
This could apply to representations $\pi$ of reductive groups over local fields
or of adelic reductive groups (and of course one needs reductive groups for
which L-functions of irreducible representations are defined). For general
linear groups, this has been studied in \cite{KrCl}.

Secondly, in algebro-geometric investigations related to reductive $p$-adic 
groups it is often beneficial to use representations not over $\C$ but over 
$\overline{\Q_\ell}$ for a prime number $\ell \neq p$. Here we are thinking in
particular of the Fargues--Scholze program \cite{FaSc}, of the generalized
Springer correspondence \cite{Lus1} and of geometric graded Hecke algebras
\cite{AMS}. One may wonder whether certain results about $\C$-representations
obtained via $\overline{\Q_\ell}$-representations depend on $\ell$ or on the 
choice of a field isomorphism $\C \cong \overline{\Q_\ell}$. Any two such 
field isomorphisms differ by composition with an element of $\Gal (\C / \Q)$, 
so one wants to understand which properties of $\C$-representations
preserved by $\Gal (\C / \Q)$.

It is clear that the action of $\Gal (\C / \Q)$ on $G$-representations
preserves irreduci\-bi\-li\-ty, and it is easy to see that it preserve cuspidality.
However, this action does in ge\-ne\-ral not preserve analytic notions like
unitarity, temperedness or square-integrability modulo center. The main results
of \cite{KSV} say that $\Gal (\C / \Q)$ stabilizes
\begin{itemize}
\item the class of essentially square-integrable $G$-representations,
\item the class of elliptic (virtual) $G$-representations.
\end{itemize}
In this paper we focus on a larger class of representations, that of
standard $\C G$-modules.
Let $Q = M U_Q$ be a parabolic subgroup of $G$ and let $\tau$ be an irreducible
tempered $M$-representation. Let $\nu \in \Hom (M, \R_{>0})$ be strictly positive 
with respect to $Q$ (for the precise condition see Section \ref{sec:2}). 
By definition, a standard $\C G$-module is a $G$-representation of the form 
$I_Q^G (\tau \otimes \nu)$, with $(Q,\tau,\nu)$ as above.
The importance of standard modules stems from the Langlands classification
(which for $p$-adic groups is not due to Langlands):

\begin{thmintro}\label{thm:7.1} \textup{\cite[\S VII.4]{Ren}}
\enuma{
\item Every standard $\C G$-module $I_Q^G (\tau \otimes \nu)$ has a unique
irreducible quotient, which we call $\mc L (Q,\tau \otimes \nu)$.
\item Every irreducible $G$-representation $\pi$ arises as the quotient of
a standard $\C G$-module $\pi_{st}$.
\item If $I_{Q'}^G (\tau' \otimes \nu')$ is a standard module and
$\mc L (Q,\tau \otimes \nu) \cong \mc L (Q',\tau' \otimes \nu')$, then
there exists a $g \in G$ such that $g Q g^{-1} = Q'$, $g M g^{-1} = M'$
and $g (\tau \otimes \nu) \cong \tau' \otimes \nu'$.
\item The maps $I_Q^G (\tau \otimes \nu) \mapsto \mc L (Q, \tau \otimes \nu)$
and $\pi \mapsto \pi_{st}$ set up a bijection between $\Irr (\C G )$ and the
set of standard $\C G$-modules (up to isomorphism).
\item The set of standard $\C G$-modules (up to isomorphism) forms a $\Z$-basis of
the Grothendieck group of the category of finite length $G$-representations.
}
\end{thmintro}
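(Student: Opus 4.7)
The plan is to follow the classical Langlands strategy, adapted to the smooth representation setting of a reductive $p$-adic group. The central tool is the standard intertwining operator
\[
J_{\bar Q | Q}(\tau \otimes \nu) : I_Q^G(\tau \otimes \nu) \longrightarrow I_{\bar Q}^G(\tau \otimes \nu),
\]
which, thanks to the strict $Q$-positivity of $\nu$, is given by an absolutely convergent integral and is known to be nonzero. For (a), I would define $\mc L(Q, \tau \otimes \nu)$ as the image of this operator, and then verify that any irreducible quotient $\pi$ of $I_Q^G(\tau \otimes \nu)$ contains $\tau \otimes \nu$ as a constituent of its Jacquet module along $U_Q$ via Frobenius reciprocity; that the positivity of $\nu$ makes this exponent extremal among all cuspidal exponents of $\pi$; and that this extremality forces $\pi$ to coincide with the image of $J_{\bar Q | Q}$. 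This yields both existence and uniqueness simultaneously.

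For (b), given an irreducible $\pi$, I would select a ``most dominant'' cuspidal exponent: choose a parabolic $P = L U_P$ minimal such that the Jacquet module along $U_P$ contains an irreducible constituent $\sigma \otimes \nu'$ with $\nu'$ maximal in an appropriate partial order on $\Hom(L, \R_{>0})$. One then groups the exponents sharing this dominant $\nu'$-component to enlarge $L$ to the Levi $M$ of a parabolic $Q = MU_Q$, on which the resulting data assemble into $\tau \otimes \nu$ with $\tau$ tempered and $\nu$ strictly $Q$-positive; the stability results for the tempered class from \cite{KSV} serve as the natural model for this assembly. Frobenius reciprocity then produces a surjection $I_Q^G(\tau \otimes \nu) \twoheadrightarrow \pi$, and (a) identifies $\pi$ with $\mc L(Q, \tau \otimes \nu)$.

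For (c) one recovers $(Q, \tau, \nu)$ from $\mc L(Q, \tau \otimes \nu)$: the Levi $M$ is read off as the stabilizer of the dominant exponent in the Jacquet module, and $\tau$ is extracted as the corresponding isotypic piece, both canonical up to $G$-conjugation. Part (d) is then a formal consequence of (a)--(c). For (e), the key observation is a triangular decomposition in the Grothendieck group:
\[
[I_Q^G(\tau \otimes \nu)] = [\mc L(Q, \tau \otimes \nu)] + \sum_{(Q', \tau', \nu')} m_{(Q', \tau', \nu')} \, [\mc L(Q', \tau' \otimes \nu')],
\]
where every $\nu'$ appearing is strictly smaller than $\nu$ in the partial order used above. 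This triangularity follows from the geometric lemma applied to Jacquet modules of parabolically induced representations; the resulting transition matrix is unitriangular with integer entries, hence invertible over $\Z$, so the standard modules form a $\Z$-basis.

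The step I expect to be the main obstacle is (c): one must show that the dominant exponent, and hence the triple $(Q, \tau, \nu)$, is determined by the isomorphism class of $\mc L(Q, \tau \otimes \nu)$ only up to $G$-conjugation. This requires a well-defined partial order on cuspidal exponents together with a proof that the constituents assembled on the enlarged Levi $M$ form a genuinely irreducible tempered representation, rather than a proper subquotient of one; the latter is where the nontrivial analytic input of Harish-Chandra's theory of the tempered dual enters.
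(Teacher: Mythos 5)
Your sketch is the classical Langlands-classification argument (image of the convergent intertwining operator $J_{\bar Q|Q}(\tau\otimes\nu)$, extremality of the exponent $\nu$, assembly of Langlands data from a dominant exponent, and unitriangularity of the transition matrix in the Grothendieck group), which is precisely the proof in the reference the paper cites; the paper itself does not reprove Theorem \ref{thm:7.1} but quotes it from \textup{\cite[\S VII.4]{Ren}}. The only points to tidy are routine: quotients of $I_Q^G(\tau\otimes\nu)$ are detected in the Jacquet module along the \emph{opposite} parabolic (Bernstein's second adjunction) or via the contragredient, not along $U_Q$ as stated, and the appeal to \cite{KSV} in step (b) is out of place --- the assembly of the tempered datum rests on Casselman's criterion and Langlands' combinatorial lemma, not on Galois-stability results.
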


It is expected that in categorical versions of the local Langlands correspondence,
standard modules behave better than irreducible $G$-representations. The reason
should be that non-elliptic standard modules always come in families (because $\nu$ 
can vary continuously), which does not hold for irreducible representations.

\subsection{Main results}

\begin{thmintro}\label{thm:B}
The action of $\Gal (\C / \Q)$ on the category of smooth complex 
$G$-repre\-sen\-ta\-tions stabilizes the class of standard $\C G$-modules.
\end{thmintro}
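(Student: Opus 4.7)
My plan is to reduce the statement to the essentially square-integrable case handled in \cite{KSV} via the structure theorem for tempered representations, and then to repackage the Galois-twisted data into standard module form using intertwining operators and the Harish-Chandra $\mu$-functions treated elsewhere in this paper. Parabolic induction is defined by operations on smooth sections involving no transcendental scalar manipulations, and hence commutes canonically with $\Gal(\C/\Q)$: for every $M$-representation $\rho$ one has $\gamma I_Q^G(\rho) \cong I_Q^G(\gamma \rho)$. By Harish-Chandra's classification of tempered representations, $\tau$ is a direct summand of $I_P^M(\sigma)$ for some parabolic $P = L U_P$ of $M$ and irreducible discrete series $\sigma$ of $L$; the projection formula then exhibits $\tau \otimes \nu$ as a direct summand of $I_P^M(\sigma \otimes \nu|_L)$, whose inducing datum $\sigma \otimes \nu|_L$ is irreducible essentially square-integrable of $L$. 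Applying the main theorem of \cite{KSV}, the $L$-representation $\gamma(\sigma \otimes \nu|_L)$ is again essentially square-integrable, and can be written uniquely as $\sigma^* \otimes \chi^*$ with $\sigma^*$ a discrete series of $L$ and $\chi^* \in X_{\nr}^+(L)$. Exactness of $\gamma$ combined with induction in stages then exhibits $\gamma \pi_{st}$ as a direct summand of $I_{PU_Q}^G(\sigma^* \otimes \chi^*)$.

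The next step is to repackage the data $(L, \sigma^*, \chi^*)$ as standard module data. The exponent $\log \chi^* \in \mathfrak{a}_L^*$ determines a unique Levi $M' \supseteq L$ of $G$, namely the one generated by $L$ and the root subgroups whose coroots annihilate $\log \chi^*$, together with a unique parabolic $Q' = M' U_{Q'}$ of $G$ such that $\log \chi^*$, viewed in $\mathfrak{a}_{M'}^* \subseteq \mathfrak{a}_L^*$, lies in the open positive chamber for $Q'$. Then $\chi^* = \nu'|_L$ for a unique $\nu' \in X_{\nr}^+(M')$ strictly positive with respect to $Q'$. Choosing any parabolic $\tilde P$ of $M'$ with Levi $L$, the induced $M'$-representation $I_{\tilde P}^{M'}(\sigma^*)$ is tempered and decomposes via Harish-Chandra's $R$-group theory as a direct sum $\bigoplus_j \tau'_j$ of tempered irreducibles, so that induction in stages yields
\[
I_{PU_Q}^G(\sigma^* \otimes \chi^*) \;\cong\; \bigoplus_j I_{Q'}^G(\tau'_j \otimes \nu'),
\]
a direct sum of standard $G$-modules.

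The main obstacle lies in showing that $\gamma \pi_{st}$ agrees with one specific summand $I_{Q'}^G(\tau'_j \otimes \nu')$ rather than with some nontrivial combination; this amounts to verifying that the direct sum decomposition of $I_P^M(\sigma)$ into tempered constituents is tracked faithfully by $\gamma$ through all the identifications above, and that the intertwining isomorphism between parabolic inductions from different parabolics with Levi $L$ survives the Galois twist. Both reduce to understanding how $\gamma$ interacts with the normalized intertwining operators generating the $R$-group action, which is precisely where the Harish-Chandra $\mu$-functions enter: the explicit formula for $\mu$-functions of irreducible representations of Levi subgroups, established earlier in this paper, displays their compatibility with $\gamma$, forcing the poles and zeros of the relevant intertwining operators---and thereby the direct sum decomposition---to be preserved. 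This pins down the identification of $\gamma \pi_{st}$ as a standard module, completing the proof.
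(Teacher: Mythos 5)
Your overall strategy is the same as the paper's second proof (reduce to the essentially square-integrable case via \cite{KSV}, then re-adapt the parabolic to the new positivity using intertwining operators), but there is a genuine gap at the decisive step. You assert
\[
I_{PU_Q}^G(\sigma^* \otimes \chi^*) \;\cong\; \bigoplus\nolimits_j I_{Q'}^G(\tau'_j \otimes \nu'),
\]
where the right-hand side is, by induction in stages, induction of $\sigma^*\otimes\chi^*$ from a parabolic $\tilde P U_{Q'}$ with Levi $L$ chosen to make $\log\chi^*$ positive, while the left-hand side is induction from the \emph{original} parabolic $PU_Q$. Two parabolic inductions of the same essentially square-integrable representation from different parabolics with the same Levi have the same Jordan--H\"older content, but they need not be isomorphic: precisely when the relevant rank-one $\mu$-function has a pole at the point in question, the intertwining operator between them is not invertible and the unique irreducible quotient and subrepresentation are exchanged (this is the content of Proposition \ref{prop:2.1}.b). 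Your justification --- that the Galois-compatibility of the $\mu$-functions "forces poles and zeros to be preserved" --- does not address this: preservation of the pole is the easy part (it is \eqref{eq:3.7}), and it is exactly at such poles that changing the parabolic changes the isomorphism class. What is actually needed, and what the paper proves, is a sign-compatibility statement: for every root $\alpha$ at which $\mu_{M_\alpha,M}(\sigma)=\infty$, the pairings $\langle\alpha^\vee,\log\nu_\sigma\rangle$ and $\langle\alpha^\vee,\log\nu_{\gamma\sigma}\rangle$ have the same (nonzero) sign (Lemma \ref{lem:3.4}.a, which rests on the sub/quotient asymmetry in Proposition \ref{prop:2.1} together with Theorem \ref{thm:3.2}). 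Only because of this can one choose the new parabolic $P'$ adapted to $\nu_{\gamma\delta}$ so that no pole-root lies in $\Phi(U_P,A_M)\cap\Phi(U_{\overline{P'}},A_M)$, whence the normalized operator $J'_{P'|P}(\delta)$ is an isomorphism (Theorem \ref{thm:1.7} and Lemma \ref{lem:3.5}). In your construction $Q'$ and $\tilde P$ are chosen with no reference to the original parabolic, so the asserted isomorphism can fail as stated; this missing argument is the heart of the theorem, not a routine verification.

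Two smaller points. First, your opening claim that normalized parabolic induction commutes canonically with $\Gal(\C/\Q)$ is false: the normalization involves $\delta_P^{1/2}$, whose values lie in $(q_F^{1/2})^{\Z}$, and $\gamma$ may send $q_F^{1/2}$ to $-q_F^{1/2}$; the correct statement is $\gamma\cdot I_P^G(\pi)\cong I_P^G(\gamma\cdot\pi\otimes\epsilon_{P,\gamma})$ for a quadratic character $\epsilon_{P,\gamma}$ (Proposition \ref{prop:3.1}). This is repairable, since $\epsilon_{P,\gamma}$ is unitary and can be absorbed into the twisted representation, but it must be tracked. Second, the difficulty you single out as the "main obstacle" --- whether $\gamma\cdot\pi_{st}$ is one summand rather than a combination --- is comparatively harmless: $\gamma$ is an exact auto-equivalence, so $\gamma\cdot\pi_{st}$ is an indecomposable finite-length module, and once the displayed isomorphism is established, Krull--Schmidt identifies it with one of the (indecomposable) standard summands, exactly as in the paper's Theorem \ref{thm:7.11}.b.
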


Theorem \ref{thm:B} enables us to define standard $\overline{\Q_\ell} G$-modules
in an ambiguous way. Namely, we call a $G$-representation $\pi_\ell$ on a 
$\overline{\Q_\ell}$-vector space standard if the complex $G$-representation
obtained from $\pi_\ell$ via any field isomorphism $\C \cong \overline{\Q_\ell}$
is a standard $\C G$-module.

Essential ingredients for Theorem \ref{thm:B} are Harish-Chandra's
intertwining operators
\[
J_{P'|P}(\pi) : I_P^G (\pi) \to I_{P'}^G (\pi)
\quad \text{for finite length } L\text{-representations } \pi .
\]
In fact we need more properties than can be found in the literature, so we
further develop the theory of intertwining operators. Let $\pi \in \Irr (L)$
be an irreducible $L$-representation. The invertibility of $J_{P'|P}(\pi)$
is governed by Harish-Chandra's $\mu$-function $\mu_{G,L}(\pi)$ \cite{Wal}.
More precisely, $\mu_{G,L}(\pi \otimes \chi)$ is a rational function of an
unramified character $\chi \in X_\nr (L)$, and $J_{P'|P}(\pi)$ is invertible
if $\mu_{G,L}(\pi) \in \C^\times$. Usually $J_{P'|P}(\pi)$ is not invertible
if $\mu_{G,L}(\pi) \in \{0,\infty\}$.

\begin{thmintro}\textup{(see Proposition \ref{prop:1.2} and Theorem \ref{thm:1.4})}
\label{thm:C} \\
Let $M \subset L$ be a Levi subgroup and let $\sigma \in \Irr (M)$ be such that
$\pi \in \Irr (L)$ is a subquotient of $I_{MU}^L (\sigma)$, for some parabolic
subgroup $MU$ of $L$. 
\enuma{
\item There exists an explicit $c \in \R_{>0}$ such that
\[
\mu_{G,L}(\pi \otimes \chi) = c \, \mu_{G,M}(\sigma \otimes \chi) \mu_{L,M}(\sigma 
\otimes \chi)^{-1} \qquad \chi \in X_\nr (L).
\]
\item Suppose in addition that $\sigma$ is cuspidal. Then
\[
\mu_{G,L}(\pi \otimes \chi) = c \, \prod\nolimits_{M_\alpha} \mu_{M_\alpha,M}(\sigma 
\otimes \chi) \qquad \chi \in X_\nr (L),
\]
where the product runs over the Levi subgroups $M_\alpha \subset G$ which contain
$M$ as minimal Levi subgroup but are not contained in $L$. Moreover each term
$\mu_{M_\alpha,M}(\sigma \otimes \chi)$ admits an explicit formula as a rational
function of $\chi$.
}
\end{thmintro}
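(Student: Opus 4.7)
The plan is to exploit transitivity of parabolic induction along the chain $M \subset L \subset G$. Fix a parabolic $Q = MU_Q \subset L$ and a parabolic $P = LU_P \subset G$, so that $R := Q U_P$ is a parabolic of $G$ with Levi $M$ and $I_R^G \cong I_P^G \circ I_Q^L$. The Harish--Chandra intertwining operator between $R$ and its opposite factors, after identifying induced spaces via transitivity, as
\[
J_{\bar R | R}(\sigma \otimes \chi) \;=\; I_P^G\bigl( J_{\bar Q | Q}(\sigma \otimes \chi) \bigr) \circ J_{\bar P | P}\bigl( I_Q^L(\sigma \otimes \chi) \bigr).
\]
Combined with the defining relations $J_{\bar P|P} \circ J_{P|\bar P} = \mu_{G,L}(-)^{-1} \mathrm{id}$ and $J_{\bar Q|Q} \circ J_{Q|\bar Q} = \mu_{L,M}(-)^{-1} \mathrm{id}$, this transfers to a multiplicative identity among $\mu$-functions.

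\textbf{Proof of (a).} Choose a Zariski-open $\mathcal{U} \subset X_\nr(L)$ on which $I_Q^L(\sigma \otimes \chi)$ is irreducible, so that $\pi \otimes \chi \cong I_Q^L(\sigma \otimes \chi)$ (up to multiplicity). On $\mathcal{U}$, composing the factorisation above with its counterpart $J_{R|\bar R}$ and applying Schur's lemma to the inner and outer factors yields
\[
\mu_{G,M}(\sigma \otimes \chi) \;=\; c \cdot \mu_{L,M}(\sigma \otimes \chi) \cdot \mu_{G,L}(\pi \otimes \chi),
\]
with $c \in \R_{>0}$ absorbing the volume and formal-degree constants built into the identification $I_P^G \circ I_Q^L \cong I_R^G$. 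Both sides are rational functions of $\chi$, so by analytic continuation the identity holds on all of $X_\nr(L)$, in particular where $I_Q^L(\sigma \otimes \chi)$ is reducible and $\pi \otimes \chi$ is only a subquotient. Rearranging gives (a).

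\textbf{Proof of (b).} For cuspidal $\sigma$, the global $\mu$-function factors as a product of rank-one $\mu$-functions, by Silberger's / Knapp--Stein's classical reduction:
\[
\mu_{G,M}(\sigma \otimes \chi) \;=\; c_{G,M} \prod_{M_\alpha \subset G} \mu_{M_\alpha, M}(\sigma \otimes \chi),
\]
the product being over Levi subgroups $M_\alpha$ of $G$ having $M$ as a minimal Levi, and each factor admitting a closed rational expression in $\chi$ determined by the cuspidal data on the associated rank-one subgroup. Applying the same formula inside $L$ yields an analogous factorisation of $\mu_{L,M}(\sigma \otimes \chi)$ over those $M_\alpha$ contained in $L$. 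Substituting both into part (a) and cancelling the shared factors leaves precisely the announced product over $M_\alpha \not\subset L$, with the new constant equal to $c \cdot c_{G,M} c_{L,M}^{-1}$.

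\textbf{Main obstacle.} The delicate step is the passage from the generic identity (where $I_Q^L(\sigma \otimes \chi)$ is irreducible and can be identified with $\pi \otimes \chi$) to the general reducible parameter. One must verify that the scalar obtained by restricting the composed intertwining operator to the Jordan--H\"older block containing $\pi \otimes \chi$ is genuinely $\mu_{G,L}(\pi \otimes \chi)^{-1}$ for the chosen $\pi$, and that the constant $c$ is independent of $\chi$ and of the subquotient. This bookkeeping of normalisations of $J$-operators on reducible induced representations is the subtle content; the rank-one computations entering (b) are by comparison essentially $\mathrm{SL}_2$-calculations.
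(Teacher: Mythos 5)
Your overall architecture is the right one — factor the $M\subset G$ intertwining operator through $L$ via transitivity of induction, compare scalars, then in (b) expand $\mu_{G,M}$ and $\mu_{L,M}$ into corank-one factors and cancel — and part (b) matches the paper's argument (modulo quoting Silberger/Heiermann for the rank-one formulas, where the paper additionally needs Lemma \ref{lem:1.3} to place $\sigma$ in the position where the explicit rational expression of Theorem \ref{thm:6.2}.b applies). However, your proof of (a) has a genuine gap at exactly the point you flag as the ``main obstacle''. You propose to find a Zariski-open $\mathcal{U}\subset X_\nr(L)$ on which $I_Q^L(\sigma\otimes\chi)$ is irreducible, identify $\pi\otimes\chi$ with it there, and then extend by rationality. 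But for $\chi\in X_\nr(L)$ one has $I_Q^L(\sigma\otimes\chi|_M)\cong I_Q^L(\sigma)\otimes\chi$, and tensoring an $L$-representation with a character of $L$ is an equivalence, so the submodule lattice is independent of $\chi$: if $I_Q^L(\sigma)$ is reducible — which is precisely the case of interest, since otherwise $\pi\cong I_Q^L(\sigma)$ and there is nothing to prove — then $I_Q^L(\sigma\otimes\chi|_M)$ is reducible for \emph{every} $\chi\in X_\nr(L)$, and your set $\mathcal{U}$ is empty. Consequently the analytic-continuation step never gets started, and even if it did, it would only give an identity for the function $\chi\mapsto\mu_{G,L}(I_Q^L(\sigma)\otimes\chi)$; relating that to $\mu_{G,L}(\pi\otimes\chi)$, which is defined through the intertwining operators of the different representation $\pi$, is a separate claim you leave unverified.

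The paper closes this gap differently (Lemma \ref{lem:1.1}): one deforms in the \emph{larger} torus $X_\nr(M)$, where Sauvageot's genericity does make $I_Q^G(\sigma\otimes\chi')$ irreducible for generic $\chi'$, so the composition $J_{P|\bar P}\,J_{\bar P|P}$ on the full induced space $I_P^G\big(I_{Q\cap L}^L(\sigma\otimes\chi')\big)$ is a scalar for all $\chi'\in X_\nr(M)$ by rationality. Specializing $\chi'=\chi|_M$, one writes $\pi=\pi_1/\pi_2$ with $\pi_i\subset I_{Q\cap L}^L(\sigma)$ and observes that the $J$-operators for $\pi$ are obtained from those of the full induced representation by restriction to $\pi_1$ and passage to the quotient; since the composed operator is scalar on the whole space, the subquotient inherits the same scalar, giving $j_{G,L}(\pi\otimes\chi)=j_{G,L}\big(I_{Q\cap L}^L(\sigma)\otimes\chi\big)$ and hence (a), with $c=c(G|L)^2c(L|M)^2/c(G|M)^2$ coming from the multiplicativity of $\gamma(G|L)$ but not of $c(G|L)$. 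This restriction-to-subquotient mechanism is the missing content in your write-up. (Minor point: in your factorisation the outer functor should be $I_{\bar P}^G$, not $I_P^G$, for the composition to be defined.)
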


\subsection{Structure of the main proof} \

The initial step towards Theorem \ref{thm:B} is an alternative construction of
standard modules, from \cite{Sol1}. Let $P = L U_P$ be a parabolic subgroup of
$G$ and let $\delta$ be an irreducible essentially square-integrable 
$L$-representation. We say that $\delta$ is positive with respect to $P$ if
the absolute value of the central character of $\delta$ is so. In that case
$I_P^G (\delta)$ is a direct sum of standard $\C G$-modules $I_P^G (\delta)_\kappa$. 
(See Paragraph \ref{par:1.5} for meaning of $\kappa$.)
Moreover every standard $\C G$-module arises in this way, from essentially 
unique $(P,L,\delta)$.

Without the positivity condition on $\delta$, $I_P^G (\delta)$ is a direct sum
of so-called quasi-standard $\C G$-modules $I_P^G (\delta )_\kappa$ (Definition 
\ref{def:7.1}). Any quasi-standard $\C G$-module $I_P^G (\delta )_\kappa$
can be made into a standard $\C G$-module by adjusting $P$, but in general that 
changes the isomorphism class of the module. Since $\Gal (\C / \Q)$ preserves
essential square-integrability \cite{KSV}, $\Gal (\C / \Q)$ stabilizes the class
of quasi-standard $\C G$-modules (Lemma \ref{lem:7.9}).

From this point on we present two proofs of Theorem \ref{thm:B}, both of 
interest in their own way. The first method relies on an invariant $\mc N$
of $G$-representations $\pi$, which measures a distance from $\pi$ to the set
of parabolic inductions of unitary cuspidal representations of Levi subgroups
of $G$ (see Paragraph \ref{par:2.2}). It is known
from \cite{Sol1} that $\mc L (Q,\tau \otimes \nu)$ is the unique irreducible
subquotient of $I_Q^G (\tau \otimes \nu)$ which has the same $\mc N$-value as
$I_Q^G (\tau \otimes \nu)$. This enables us to characterize standard
$\C G$-modules as those quasi-standard $\C G$-modules which have an
irreducible quotient with the appropriate $\mc N$-value (Theorem \ref{thm:7.6}).
In contrast to the original definition, this characterization of standard
modules uses neither temperedness not positivity of characters.

We show that this configuration is preserved when we let any $\gamma \in 
\Gal (\C / \Q)$ act on a standard $\C G$-module. That leads to our first
proof of Theorem \ref{thm:B}, in Proposition \ref{prop:3.3}. However, this
proof is conditional: we assume that $\gamma$ preserves the $\mc N$-values
of all essentially square-integrable representations of Levi subgroups of $G$.
That property is not yet known, but it follows from the rationality of 
$q$-parameters for related Hecke algebras. Such rationality has been 
conjectured by Lusztig \cite{Lus2}, and has been checked in the large
majority of all cases \cite{SolParam,Oha}.

Our second proof of Theorem \ref{thm:B} uses that the parabolic subgroup $P$
in a quasi-standard $\C G$-module $I_P^G (\delta )_\kappa$ is often not unique.
Namely, for any other parabolic subgroup $P'$ with the same Levi factor $L$,
there exists an intertwining operator
\begin{equation}\label{eq:1}
J_{P'|P}(\delta) : I_P^G (\delta) \to I_{P'}^G (\delta) .
\end{equation}
Under mild conditions \eqref{eq:1} is an isomorphism, which entails that
$I_P^G (\delta )_\kappa$ is isomorphic to a quasi-standard direct summand
of $I_{P'}^G (\delta)$. 

With Theorem \ref{thm:C} one can reduce questions about intertwining operators
and $\mu$-functions to the cases of cuspidal representations, which can be 
analysed more easily. For instance, consider the corank one intertwining
operator $J_{M U_{-\alpha} | M U_\alpha}(\sigma \otimes \chi)$, where
$M U_{-\alpha}$ and $M U_\alpha$ are the parabolic subgroups of $M_\alpha$
with Levi factor $M$. It was already known that, if $\mu_{M_\alpha,M}(\sigma) = 0$,
then $J_{M U_{-\alpha} | M U_\alpha}(\sigma \otimes \chi)$ can be normalized
to an operator 
\[
J'_{M U_{-\alpha} | M U_\alpha}(\sigma \otimes \chi) : I_P^G (\sigma \otimes \chi)
\to I_{P'}^G (\sigma \otimes \chi) ,
\]
which is invertible for $\chi$ in a neighborhood of 1 in $X_\nr (M)$. In particular
$I_{M U_{-\alpha}}^{M_\alpha}(\sigma)$ is isomorphic to $I_{M U_\alpha}^{M_\alpha}
(\sigma)$ whenever $\mu_{M_\alpha,M}(\sigma) \neq \infty$. More generally, we
prove in Theorem \ref{thm:1.7} that (with the notations from Theorem \ref{thm:C})
\begin{equation}\label{eq:2}
I_P^G (\pi) \cong I_{P'}^G (\pi) \text{ unless } \mu_{M_\alpha,M}(\sigma) = \infty
\text{ for some } M_\alpha \text{ with } P \supset M U_\alpha \not\subset P' .
\end{equation}
This is used in our second proof of Theorem \ref{thm:B}. 

For a given $\gamma \in \Gal (\C / \Q)$ and 
$(P,L,\delta)$ as above, we construct a particular $P' = L U_{P'}$ such that
$\gamma \cdot \delta$ is positive with respect to $P'$. An explicit analysis of 
the corank one situation (Proposition \ref{prop:2.1}) reveals an asymmetry
between $I_{M U_\alpha}^{M_\alpha}(\sigma)$ and 
$I_{M U_{-\alpha}}^{M_\alpha}(\sigma)$ when $\mu_{M_\alpha,M}(\sigma) = \infty$,
the roles of the unique quotient and the unique subrepresentation differ.
Using that with as $\sigma$ a representative of the cuspidal support of $\delta$, 
we can arrange that all the $M_\alpha$ with 
$\mu_{M_\alpha,M}(\sigma) = \infty$ satisfy $M U_\alpha \subset P \cap P'$
or $M U_{-\alpha} \subset P \cap P'$. With \eqref{eq:2}, it follows that
a normalized version $J'_{P'|P}(\delta)$ of $J_{P'|P}(\delta)$ gives
isomorphisms 
\begin{equation}\label{eq:3}
I_P^G (\delta) \cong I_{P'}^G (\delta) \quad \text{and} \quad
\gamma \cdot I_P^G (\delta) \cong \gamma \cdot I_{P'}^G (\delta) .
\end{equation}
From there, we show in Theorem \ref{thm:7.11} that 
$\gamma \cdot I_P^G (\delta )_\kappa$ is a standard $\C G$-module.

\renewcommand{\theequation}{\arabic{section}.\arabic{equation}}
\counterwithin*{equation}{section}

\section{Notations} 

$F$: non-archimedean local field

$G$: $F$-rational points of a connected reductive group $\mc G$ defined over $F$

$G_\der$: $F$-rational points of derived subgroup of $\mc G$

$Z (G)$: center of $G$

$A_G$: maximal $F$-split torus in $Z(G)$

$\Rep (G)$: category of smooth complex $G$-representations

$\Irr (G)$: set of irreducible objects in $\Rep (G)$, up to isomorphism

$G^1$: subgroup of $G$ generated by all compact subgroups of $G$

$X_\nr (G) = \Hom (G/G^1,\C^\times)$: group of unramified characters of $G$

$L,M$: Levi subgroups of $G$

$P = L U_P$: parabolic subgroup of $G$ with Levi factor $L$ and unipotent radical $U_P$

$\overline{P} = L U_{\overline P}$: parabolic subgroup opposite to $P = L U_P$

$I_P^G$: normalized parabolic induction functor $\Rep (L) \to \Rep (G)$

\section{Intertwining operators and Harish-Chandra's $\mu$-functions} \label{sec:1}

We recall the definition of Harish-Chandra's intertwining operators. Consider two 
parabolic subgroups $P = L U_P$ and $P' = L U_{P'}$ with a common Levi 
factor $L$. Let $(\pi,V_\pi)$ be a $L$-representation. All the representations 
$I_P^G (\pi \otimes \chi)$ with $\chi \in X_\nr (L)$ can be realized on the same 
vector space, namely $\mathrm{ind}_{P \cap K_0}^{K_0} V_\pi$ for a good maximal compact
subgroup $K_0$ of $G$. This makes it possible to speak of objects on 
$I_P^G (\pi \otimes \chi)$ that vary regularly or
rationally as functions of $\chi \in X_\nr (L)$. Consider the intertwining operators
\begin{equation}\label{eq:6.1}
\begin{array}{llll}
J_{P' |P}(\pi \otimes \chi) : & I_P^G (\pi \otimes \chi) & \to & 
I_{P'}^G (\pi \otimes \chi) \\
 & f & \mapsto & \big[ g \mapsto \int_{U_P \cap U_{P'} \setminus U_{P'}} 
 f(ug) \, \textup{d} u \big]
\end{array}.
\end{equation}
For $\pi$ of finite length, this is well-defined as a family of $G$-homomorphisms depending
rationally on $\chi \in X_\nr (L)$ \cite[Th\'eor\`eme IV.1.1]{Wal}. There is an
alternative construction of \eqref{eq:6.1}, in \cite[proof of Th\'eor\`eme IV.1.1]{Wal}.
That construction works for representations with coefficients in any algebraically 
closed field of characteristic not~$p$, which has been exploited recently in \cite{MoTr}
to define intertwining operators in more general settings.

Let $\bar P = L U_{\bar P}$ be the parabolic subgroup opposite to $P = L U_P$.
We assume that $\pi$ is irreducible and we consider the composition 
\begin{equation}\label{eq:6.9}
J_{P | \bar P}(\pi \otimes \chi) J_{\bar P | P}(\pi \otimes \chi) :
I_P^G (\pi \otimes \chi) \to I_P^G (\pi \otimes \chi) \qquad \chi \in X_\nr (L) . 
\end{equation}
This depends rationally on $\chi$, and for generic $\chi$ the representation 
$I_P^G (\pi \otimes \chi)$ is irreducible \cite[Th\'eor\`eme 3.2]{Sau}. Therefore
\eqref{eq:6.9} is a scalar operator \cite[\S IV.3]{Wal}, say 
\begin{equation}\label{eq:6.36}
j_{G,L}(\pi \otimes \chi) \, \mathrm{id} \quad \text{with} \quad 
j_{G,L} : X_\nr (L) \pi \to \C \cup \{\infty\} .
\end{equation}
For purposes of harmonic analysis, the reciprocal of $j_{G,L}$ is often more 
convenient than $j_{G,L}$ itself. It usually rescaled by numbers
$\gamma (G|L), c(G|L) \in \Q_{>0}$ defined in \cite[p. 241]{Wal}. By 
definition \cite[\S V.2]{Wal} Harish-Chandra's $\mu$-function is
\begin{equation}\label{eq:1.3}
\mu_{G,L}(\pi \otimes \chi) = c(G|L)^2 \gamma (G|L)^2 j_{G,L}(\pi \otimes \chi )^{-1}.
\end{equation}
This $\mu$-function is especially important for essentially square-integrable
representations $\pi$, because then $\mu_{G,L}(\pi \otimes \chi)$ describes how the
Plancherel density on $\{ I_P^G (\pi \otimes \chi) : \chi \in X_\nr (L) \}$ varies
as a function of $\chi$ \cite{Wal}.

Let $A_L$ be the maximal split torus in $Z(L)$. The set of nonzero weights by which 
$A_L$ acts on the Lie algebra of $G$ is not necessarily a root system, but it is
always a generalized root system in the sense of \cite{DiFi}. In particular notions
like basis, positive roots and reduced roots still make sense. Let $\Phi (G,A_L)$ 
be the set of reduced roots of $(G,A_L)$ and let $\Phi (G,A_L)^+ = \Phi (U_P,A_L)$ 
be the subset of roots appearing in the Lie algebra of $P$. 
For $\alpha \in \Phi (G,A_L)^+$, let 
$U_\alpha$ (resp. $U_{-\alpha}$) be the root subgroup of $G$ for all positive
(resp. negative) multiples of $\alpha$. Let $L_\alpha$ be the Levi subgroup of
$G$ generated by $L \cup U_\alpha \cup U_{-\alpha}$. Then $L$ is a maximal proper
Levi subgroup of $L_\alpha$. Now \cite[IV.3.(5) and Lemma V.2.1]{Wal} say that
\begin{equation}\label{eq:1.4}
\begin{aligned}
& j_{G,L}(\pi) = \prod\nolimits_{\alpha \in \Phi (G,A_L)^+} j_{L_\alpha,L} (\pi) ,\\
& \mu_{G,L}(\pi) = \prod\nolimits_{\alpha \in \Phi (G,A_L )^+} \mu_{L_\alpha,L} (\pi). 
\end{aligned}
\end{equation}
With these $\mu$-functions one can check easily whether certain intertwining operators
are invertible.

\begin{lem}\label{lem:1.5}
Suppose that $\mu_{L_\alpha,L}(\pi) \notin \{0,\infty\}$ (or equivalently
$j_{L_\alpha,L}(\pi) \notin \{0,\infty\}$) for all
$\alpha \in \Phi (U_P,A_L) \cap \Phi (U_{\overline{P'}},A_L)$. Then
$J_{P'|P}(\pi) : I_P^G (\pi) \to I_{P'}^G (\pi)$ is invertible.
\end{lem}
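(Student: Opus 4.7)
The plan is to decompose $J_{P'|P}(\pi)$ into a composition of corank-one intertwining operators---one for each root in $S := \Phi(U_P,A_L) \cap \Phi(U_{\overline{P'}},A_L)$---and to verify invertibility factor by factor. The equivalence asserted in the hypothesis is immediate from \eqref{eq:1.3}, since $c(G|L)^2\gamma(G|L)^2$ is a positive rational number.

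First I would construct a gallery from $P$ to $P'$. Standard arguments in the setting of the generalized root systems of \cite{DiFi} produce an ordering $S = \{\alpha_1,\ldots,\alpha_n\}$ and a sequence of parabolic subgroups $P = P_0, P_1, \ldots, P_n = P'$, all with Levi factor $L$, such that $P_k$ differs from $P_{k-1}$ only by flipping the sign of $\alpha_k$; concretely, picking a generic linear functional positive on $\Phi(U_P,A_L)$ and negative on $\Phi(U_{\overline{P'}},A_L)$ and ordering the elements of $S$ by decreasing value yields such a chain. Since no root is ever flipped back, the integration domains in \eqref{eq:6.1} cascade without cancellation, so the cocycle property of Harish-Chandra's intertwining operators \cite[\S IV.3]{Wal} applies stepwise to give
\[
J_{P'|P}(\pi) \;=\; J_{P_n|P_{n-1}}(\pi) \circ \cdots \circ J_{P_1|P_0}(\pi).
\]

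Each factor $J_{P_k|P_{k-1}}(\pi)$ then equals the parabolic induction from $L_{\alpha_k}$ to $G$ (along any parabolic of $G$ with Levi $L_{\alpha_k}$ containing both $P_{k-1}$ and $P_k$) of the corank-one operator
\[
J^{L_{\alpha_k}}_{L U_{-\alpha_k} | L U_{\alpha_k}}(\pi) : I^{L_{\alpha_k}}_{L U_{\alpha_k}}(\pi) \to I^{L_{\alpha_k}}_{L U_{-\alpha_k}}(\pi),
\]
since the integral \eqref{eq:6.1} at this step collapses onto the single root subgroup $U_{\alpha_k}$. By \eqref{eq:6.36}, the composition of this corank-one operator with its opposite yields the scalar $j_{L_{\alpha_k},L}(\pi) \cdot \mathrm{id}$ in both orders. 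The hypothesis $j_{L_{\alpha_k},L}(\pi) \in \C^\times$ therefore exhibits $J^{L_{\alpha_k}}_{L U_{-\alpha_k} | L U_{\alpha_k}}(\pi)$ as invertible, with inverse $j_{L_{\alpha_k},L}(\pi)^{-1}\, J^{L_{\alpha_k}}_{L U_{\alpha_k} | L U_{-\alpha_k}}(\pi)$. Normalized parabolic induction is exact and sends invertibles to invertibles, so each $J_{P_k|P_{k-1}}(\pi)$ is invertible, and hence so is their composition $J_{P'|P}(\pi)$.

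The main obstacle is setting up the gallery decomposition rigorously in the framework of the generalized root systems of \cite{DiFi}, together with the bookkeeping needed to identify each $J_{P_k|P_{k-1}}(\pi)$ with the induction of its corank-one representative inside $L_{\alpha_k}$ via a Fubini-type manipulation of the integral \eqref{eq:6.1}. Once these structural points are in place, the lemma reduces to the rank-one scalar identity recorded above.
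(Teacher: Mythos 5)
Your proposal is correct and takes essentially the same route as the paper's proof: a gallery of parabolic subgroups from $P$ to $P'$ differing by one root at a time (the paper simply cites \cite[p.~279]{Wal} and IV.1.(12) for this and for the factorization $J_{P'|P}(\pi) = J_{P_d|P_{d-1}}(\pi)\circ\cdots\circ J_{P_1|P_0}(\pi)$), identification of each factor with $I_{L_{\alpha}P}^G$ applied to the corank-one operator $J_{LU_{-\alpha}|LU_\alpha}(\pi)$ (cited there as \cite[IV.1.(14)]{Wal} rather than rederived by a Fubini argument), and invertibility of the rank-one operator from the scalar identity $J_{LU_{\alpha}|LU_{-\alpha}}(\pi)J_{LU_{-\alpha}|LU_{\alpha}}(\pi) = j_{L_\alpha,L}(\pi)\,\mathrm{id} \in \C^\times\,\mathrm{id}$. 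The only slip is your one-line description of the gallery: a linear functional cannot be simultaneously positive on $\Phi(U_P,A_L)$ and negative on $\Phi(U_{\overline{P'}},A_L)$, since both sets contain $S$; the intended (and standard) construction is to move generically from a point in the chamber of $P$ to one in the chamber of $P'$, crossing one wall at a time, which is exactly what the paper imports from \cite{Wal}.
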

\begin{proof}
As noticed on \cite[p. 279]{Wal}, there exists a sequence of parabolic subgroups
$P = P_0, P_1, \cdots , P_d = P'$, all with Levi factor $L$, such that
$\Phi (P_i,A_L)$ and $\Phi (P_{i-1},A_L)$ differ by only one root and
$d = |\Phi (U_P,A_L) \cap \Phi (U_{\overline{P'}},A_L)|$. In this situation
\cite[IV.1.(12)]{Wal} says that
\begin{equation}\label{eq:1.9}
J_{P'|P}(\pi) = J_{P_d | P_{d-1}}(\pi) \circ \cdots \circ J_{P_1|P_0}(\pi) .
\end{equation}
It suffices to show that each $J_{P_i | P_{i-1}}(\pi)$ is invertible

Therefore we may assume that $\Phi (U_{P'},A_L) \cap \Phi (U_{\overline{P'}},A_L)$ 
consists of a single root $\alpha$. By \cite[IV.1.(14)]{Wal} we may identify
\begin{equation}\label{eq:1.10}
J_{P',P}(\pi) = I_{L_\alpha P}^G \big( J_{L U_{-\alpha} |
L U_\alpha} (\pi ) \big) : I_{L_\alpha P}^G ( I_{L U_\alpha}^L (\pi)) \to
I_{L_\alpha P}^G (I_{L U_{-\alpha}}^L (\pi)) .
\end{equation}
By assumption
\[
J_{L U_{\alpha} | L U_{-\alpha}} (\pi ) J_{L U_{-\alpha} | L U_\alpha} (\pi ) =
j_{L_\alpha,L} (\pi) \: \mr{id} \in \C^\times \: \mr{id} .
\]
Hence $J_{L U_{-\alpha} | L U_\alpha} (\pi)$ is invertible and \eqref{eq:1.10}
is invertible as well.
\end{proof}

\subsection{Silberger's formulas for the $\mu$-functions} \

In \cite{Sil3,Sil4} the functions $\mu_{G,L}(\pi)$ were determined, for essentially 
square-in\-te\-gra\-ble representations. We will provide a different argument to
arrive at the same formula in larger generality.

Let $M$ be a Levi subgroup of $L$ and $Q = M U_Q$ be a parabolic subgroup of $G$
with Levi factor $M$, such that $Q \subset P$. Then $Q \cap L$ is a parabolic
subgroup of $L$ with Levi factor $M$, $P = Q L$ and $\overline{P} = L \overline{Q}$.
We note that, since $P = L \ltimes U_P$:
\begin{equation}\label{eq:1.5}
U_Q = U_{Q \cap L} \ltimes U_P \qquad \text{and} \qquad 
U_{\overline{Q}} = U_{\overline Q \cap L} \ltimes U_{\overline P} .
\end{equation}

\begin{lem}\label{lem:1.1}
Suppose that $\sigma \in \Irr (M)$ and that $\pi$ is a subquotient of 
$I_{Q \cap L}^L (\sigma)$. Then $\mu_{G,L} \big( I_{Q \cap L}^L (\sigma) \otimes 
\chi \big)$ is defined for $\chi \in X_\nr (L)$,
and equals $\mu_{G,L}(\pi \otimes \chi)$.
\end{lem}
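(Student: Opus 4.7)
The strategy is to reduce both $\mu$-functions to a common scalar computed at the deeper Levi $M$, via induction in stages together with the corank-one decomposition \eqref{eq:1.9}.

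First I would verify that $\mu_{G,L}(I_{Q\cap L}^L(\sigma)\otimes\chi)$ makes sense. Since $\sigma$ is irreducible, the $L$-representation $I_{Q\cap L}^L(\sigma)\otimes\chi \cong I_{Q\cap L}^L(\sigma\otimes\chi|_M)$ has finite length, so by \cite[Th\'eor\`eme IV.1.1]{Wal} the operators $J_{\bar P|P}$ and $J_{P|\bar P}$ exist as rational families in $\chi \in X_\nr(L)$. Transitivity of parabolic induction identifies
\[
I_P^G(I_{Q\cap L}^L(\sigma)\otimes\chi) = I_Q^G(\sigma\otimes\chi|_M), \qquad
I_{\bar P}^G(I_{Q\cap L}^L(\sigma)\otimes\chi) = I_{Q'}^G(\sigma\otimes\chi|_M),
\]
where $Q' = (Q\cap L) U_{\bar P}$ is the parabolic subgroup of $G$ with Levi $M$ and unipotent radical $U_{Q\cap L} U_{\bar P}$. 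By the standard functoriality of intertwining operators under induction in stages (cf.\ \cite[IV.1.(14)]{Wal}, which rests on the factorization $U_Q = U_{Q\cap L}\ltimes U_P$ from \eqref{eq:1.5} applied to the integral \eqref{eq:6.1}), these identify $J_{\bar P|P}$ and $J_{P|\bar P}$ with $J_{Q'|Q}$ and $J_{Q|Q'}$, evaluated at the $M$-representation $\sigma\otimes\chi|_M$. By \eqref{eq:1.9} the composition $J_{Q|Q'}J_{Q'|Q}(\sigma\otimes\chi|_M)$ then factors into corank-one pieces, each of which acts as a scalar $j_{M_\alpha,M}(\sigma\otimes\chi|_M)\,\mathrm{id}$ because $\sigma$ is irreducible. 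Hence the full composition is scalar multiplication on $I_Q^G(\sigma\otimes\chi|_M)$, which defines $j_{G,L}(I_{Q\cap L}^L(\sigma)\otimes\chi)$ and, via \eqref{eq:1.3}, $\mu_{G,L}(I_{Q\cap L}^L(\sigma)\otimes\chi)$.

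Next, twisting by $\chi$ preserves subquotients, so $\pi\otimes\chi$ is a subquotient of $I_{Q\cap L}^L(\sigma)\otimes\chi$ and, by exactness of $I_P^G$, $I_P^G(\pi\otimes\chi)$ is a subquotient of the ambient $I_Q^G(\sigma\otimes\chi|_M)$. A scalar operator restricts and descends to the same scalar on every subquotient, so $J_{P|\bar P} J_{\bar P|P}$ acts on $I_P^G(\pi\otimes\chi)$ by the rational scalar computed above. For generic $\chi \in X_\nr(L)$, \cite[Th\'eor\`eme 3.2]{Sau} makes $I_P^G(\pi\otimes\chi)$ irreducible, so Schur's lemma identifies this scalar with $j_{G,L}(\pi\otimes\chi)$; by rationality in $\chi$ the identity extends to all of $X_\nr(L)$, and \eqref{eq:1.3} yields the $\mu$-function equality.

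I expect the main technical obstacle to lie in the naturality step of the first paragraph, namely the clean identification $J_{\bar P|P}(I_{Q\cap L}^L(\sigma)\otimes\chi) = J_{Q'|Q}(\sigma\otimes\chi|_M)$ under the transitivity isomorphism. This has to be extracted from the integral formula \eqref{eq:6.1} using the unipotent factorization \eqref{eq:1.5} to split the defining integral into iterated integrals over $U_P$ and $U_{Q\cap L}$ matching those of $J_{Q'|Q}$; the rest of the argument is then essentially formal.
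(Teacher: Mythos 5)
Your argument is correct in substance and reaches the lemma, but it takes a genuinely different route at the key step, namely why the composition $J_{P|\bar P}J_{\bar P|P}$ on $I_P^G\big(I_{Q\cap L}^L(\sigma)\otimes\chi\big)$ is scalar even though the inducing $L$-representation may be reducible (so generic irreducibility over $X_\nr(L)$ alone is useless). The paper handles this by deforming over the larger torus $X_\nr(M)$: for generic $\chi'\in X_\nr(M)$ the representation $I_{Q\cap L}^L(\sigma\otimes\chi')$ (hence its induction to $G$) is irreducible by \cite{Sau}, so the composition is scalar there, and rationality in $\chi'$ gives scalarity at every $\chi'$, in particular at $\chi'=\chi|_M$; the induction-in-stages manipulations are postponed to Proposition \ref{prop:1.2}. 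You instead front-load exactly those manipulations: you transport the operators through the transitivity isomorphism to identify them with $J_{Q'|Q}$ and $J_{Q|Q'}$ at the level of $M$ (your $Q'$ with $U_{Q'}=U_{Q\cap L}U_{\bar P}$, which is not $\overline Q$ -- that is fine and consistent with the paper's \eqref{eq:1.6}--\eqref{eq:1.7}), and then argue scalarity at level $M$; as a bonus your scalar is visibly $\prod_{\alpha\in\Phi(U_P,A_M)}j_{M_\alpha,M}(\sigma\otimes\chi|_M)$, so you essentially get Proposition \ref{prop:1.2}.a along the way, at the cost of needing the integral-splitting argument already here. One sentence needs repair: the factorization \eqref{eq:1.9} of $J_{Q'|Q}$ alone does not consist of scalar operators -- the corank-one factors $J_{P_i|P_{i-1}}$ are maps between different induced spaces, not endomorphisms. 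To make your claim precise, either write $J_{Q|Q'}$ along the reversed chain and telescope from the inside out, using \eqref{eq:1.10} and the corank-one scalar property \eqref{eq:6.36} (valid since $\sigma$ is irreducible) to peel off one factor $j_{M_\alpha,M}(\sigma\otimes\chi|_M)$ at a time, or simply apply the paper's own device directly to $J_{Q|Q'}J_{Q'|Q}(\sigma\otimes\chi')$: generic irreducibility of $I_Q^G(\sigma\otimes\chi')$ over $\chi'\in X_\nr(M)$ plus rationality. Your remaining steps -- functoriality of the integral formula under passage to the subquotient $\pi=\pi_1/\pi_2$, so the scalar descends, and extension from generic $\chi$ to all of $X_\nr(L)$ by rationality of both $j$-functions -- coincide with the paper's proof.
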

\begin{proof}
There is a natural isomorphism $I_{Q \cap L}^L (\sigma) \otimes \chi \cong 
I_{Q \cap L}^L (\sigma \otimes \chi|_M)$. For $\chi' \in X_\nr (M)$ in generic 
position, $I_Q^L (\chi')$ is irreducible \cite[Th\'eor\`eme 3.2]{Sau}. In the same 
way as in \eqref{eq:6.9} and \eqref{eq:6.36} we see that 
\begin{equation}\label{eq:1.1}
J_{P | \bar P} \big( I_{Q \cap L}^L (\sigma \otimes \chi') \big) J_{\bar P | P}\big( I_{Q \cap L}^L (\sigma \otimes \chi') \big)
= j_{G,L} \big( I_{Q \cap L}^L (\sigma \otimes \chi') \big) \mr{id} \quad \chi' \in X_\nr (M).
\end{equation}
This shows that $j_{G,L} \big( I_{Q \cap L}^L (\sigma \otimes \chi') \big)$ and $\mu_{G,L}(I_{Q \cap L}^L \big( \sigma
\otimes \chi') \big)$ are well-defined. We note that the formulas for 
$J_{\bar P | P} \big( I_{Q \cap L}^L (\sigma \otimes \chi) \big)$ and $J_{\bar P | P}(\pi \otimes \chi)$
are essentially the same, only applied to different representations. 

Write $\pi = \pi_1 / \pi_2$ where $\pi_1, \pi_2$ are subrepresentations of 
$I_{Q \cap L}^L (\sigma)$. One can obtain $J_{\bar P|P}(\pi) : I_P^G  (\pi) \to
I_{\overline P}^G (\pi)$ from $J_{\bar P | P} \big( I_{Q \cap L}^L (\sigma \otimes \chi) \big)$ 
by first restriction to $J_{\overline{P} | P} (\pi_1)$ and then taking the induced
homomorphism on $I_P^G (\pi) \cong I_P^G (\pi_1) / I_P^G (\pi_2)$. Since
\eqref{eq:1.1} with $\chi' = \chi \in X_\nr (L)$ is a scalar operator, it follows
that $J_{P | \bar P}(\pi \otimes \chi) J_{\bar P | P}(\pi \otimes \chi)$ is also
a scalar operator, with the same scalar. In other words,
\begin{equation}\label{eq:1.2}
j_{G,L} \big( I_{Q \cap L}^L (\sigma) \otimes \chi \big) = j_{G,L}(\pi \otimes \chi) .
\end{equation}
This argument applies initially for every $\chi \in X_\nr (L)$ such that
$j_{G,L}(\pi \otimes \chi) \neq \infty$, and then it extends to all
$\chi \in X_\nr (L)$ because both $j$-functions are rational in $\chi$.
From \eqref{eq:1.2} and \eqref{eq:1.3} we see that
$\mu_{G,L} \big( I_{Q \cap L}^L (\sigma) \otimes \chi \big) = \mu_{G,L}(\pi \otimes \chi)$.
\end{proof}

The following result generalizes \cite[Theorem 1]{Sil4}.

\begin{prop}\label{prop:1.2}
In the setting of Lemma \ref{lem:1.1} we have, for $\chi \in X_\nr (L)$:
\enuma{
\item $j_{G,L}(\pi \otimes \chi) = j_{G,M}(\sigma \otimes \chi) j_{L,M}(\sigma \otimes
\chi)^{-1}$,
\item $\mu_{G,L}(\pi \otimes \chi) = {\displaystyle \frac{\mu_{G,M}(\sigma \otimes 
\chi)}{\mu_{L,M}(\sigma \otimes\chi)} \frac{c(G|L)^2 c(L|M)^2}{c(G|M)^2}}$.
}
\end{prop}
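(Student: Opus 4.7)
The plan is to prove (a) first via a factorization of Harish-Chandra's intertwining operators through the intermediate parabolic $P$, and then deduce (b) by translating from $j$-functions to $\mu$-functions with \eqref{eq:1.3}.

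For part (a), the starting point is transitivity of parabolic induction combined with the decomposition \eqref{eq:1.5}, which gives
\[
I_Q^G(\sigma) \cong I_P^G\big(I_{Q \cap L}^L(\sigma)\big) \quad\text{and}\quad
I_{\overline Q}^G(\sigma) \cong I_{\overline P}^G\big(I_{\overline Q \cap L}^L(\sigma)\big).
\]
Splitting the defining integral \eqref{eq:6.1} for $J_{\overline Q \mid Q}(\sigma)$ along $U_{\overline Q} = U_{\overline Q \cap L}\,U_{\overline P}$ yields the factorization
\[
J_{\overline Q \mid Q}(\sigma) \;=\; J_{\overline P \mid P}\big(I_{\overline Q \cap L}^L(\sigma)\big) \circ I_P^G\!\left( J_{\overline Q \cap L \mid Q \cap L}(\sigma) \right),
\]
and symmetrically for $J_{Q \mid \overline Q}(\sigma)$. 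This is what underlies \cite[IV.1.(13)]{Wal}. Composing the two factorizations, one obtains a four-fold composition whose middle two factors can be swapped by invoking naturality of $J_{\overline P \mid P}$ with respect to the $L$-homomorphism $J_{Q \cap L \mid \overline Q \cap L}(\sigma)$. After the swap, the outer pair becomes $J_{P\mid \overline P}\big(I_{Q\cap L}^L(\sigma)\big) \circ J_{\overline P \mid P}\big(I_{Q\cap L}^L(\sigma)\big)$ and the inner pair becomes $I_P^G\big(J_{Q\cap L \mid \overline Q\cap L}(\sigma)\circ J_{\overline Q\cap L \mid Q\cap L}(\sigma)\big)$. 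The inner pair equals $j_{L,M}(\sigma)\,\mr{id}$ by definition of $j_{L,M}$, and the outer pair equals $j_{G,L}\big(I_{Q\cap L}^L(\sigma)\big)\,\mr{id}$, which by Lemma \ref{lem:1.1} is $j_{G,L}(\pi)\,\mr{id}$. Since the full composition is $j_{G,M}(\sigma)\,\mr{id}$, this gives $j_{G,M}(\sigma) = j_{G,L}(\pi)\,j_{L,M}(\sigma)$. Replacing $\sigma$ by $\sigma \otimes \chi$ throughout yields (a) for arbitrary $\chi \in X_\nr(L)$, and rationality in $\chi$ takes care of the points where $j$-values degenerate.

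For part (b), I would substitute the identity of (a) into the definition \eqref{eq:1.3} of $\mu_{G,L}(\pi\otimes\chi)$, expressing $j_{G,L}(\pi\otimes\chi)^{-1}$ as $j_{L,M}(\sigma\otimes\chi)\,j_{G,M}(\sigma\otimes\chi)^{-1}$, and then re-express the two factors in terms of $\mu_{L,M}$ and $\mu_{G,M}$ by again applying \eqref{eq:1.3}. The $c$-constants and $\gamma$-constants collect into a single scalar prefactor $c(G\mid L)^2 c(L\mid M)^2 \gamma(G\mid L)^2 \gamma(L\mid M)^2 / (c(G\mid M)^2 \gamma(G\mid M)^2)$. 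To obtain the stated formula, it remains to invoke the multiplicativity $\gamma(G\mid L)\,\gamma(L\mid M) = \gamma(G\mid M)$, which is a standard transitivity property of Waldspurger's Haar-measure normalizing constants on $\mf{a}_L^G$, $\mf{a}_M^L$ and $\mf{a}_M^G$.

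The step I expect to be most delicate is the naturality/commutation that allows the middle two factors to be swapped in the composed four-fold product, because the operators involved are only rationally defined in $\chi$ and may be non-invertible at some $\chi$. The right way to handle this is to prove the identity first on a Zariski-dense subset of $X_\nr(L)$ where $I_Q^G(\sigma \otimes \chi)$ is irreducible (so both $J_{\overline Q \mid Q}(\sigma \otimes \chi) J_{Q \mid \overline Q}(\sigma \otimes \chi)$ and the intermediate compositions are forced to be scalars by Schur), and then extend to all $\chi$ by rationality, exactly as in the proof of Lemma \ref{lem:1.1}.
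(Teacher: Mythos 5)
Your proposal is correct and follows essentially the same route as the paper: factor $J_{\overline{Q}|Q}(\sigma\otimes\chi)$ and $J_{Q|\overline{Q}}(\sigma\otimes\chi)$ through the intermediate parabolic $P$ via induction in stages and \eqref{eq:1.5}, identify the inner pair with $j_{L,M}(\sigma\otimes\chi)\,\mathrm{id}$ and the outer pair with $j_{G,L}\big(I_{Q\cap L}^L(\sigma)\otimes\chi\big)\,\mathrm{id}=j_{G,L}(\pi\otimes\chi)\,\mathrm{id}$ by Lemma \ref{lem:1.1}, then pass to $\mu$-functions via \eqref{eq:1.3} and $\gamma(G|M)=\gamma(G|L)\gamma(L|M)$. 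The only deviation is that the paper chooses the two factorizations asymmetrically so that the middle factors are both of the form $I_{\overline{P}}^G(\cdot)$ and compose by functoriality alone, whereas your symmetric choice needs the extra naturality swap of $J_{\overline{P}|P}$ with an induced morphism -- a legitimate step, correctly justified by your generic-irreducibility-plus-rational-continuation argument.
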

\begin{proof}
(a) In view of \eqref{eq:1.2}, we may replace $\pi \otimes \chi$ by 
$I_{Q \cap L}^L (\sigma) \otimes \chi \cong I_{Q \cap L}^L (\sigma \otimes \chi)$. 
Then all the involved expressions are defined for any $\chi \in X_\nr (M)$. 

Consider the operator
\begin{equation}\label{eq:1.6}
I_{\overline P}^G (J_{\overline {Q \cap L} | {Q \cap L}} (\sigma \otimes \chi)) \circ
J_{\overline P | P} \big( I_{Q \cap L}^L (\sigma \otimes \chi) \big) : 
I_P^G \big( I_{Q \cap L}^L (\sigma \otimes \chi) \big) \to 
I_{\overline{P}}^G \big( I_{\overline{Q} \cap L}^L (\sigma \otimes \chi) \big) .
\end{equation}
For $u \in G$ and a function $f$ on $G$ we write $(\lambda_u f)(g) = f (u^{-1}g)$.
Then the effect of \eqref{eq:1.6} is 
\[
f \mapsto \int_{U_{\overline P}} (\lambda_{u_1} f) \, \textup{d} u_1 \mapsto
\int_{U_{\overline{Q} \cap L}} \int_{U_{\overline P}} \lambda_{u_2} 
(\lambda_{u_1} f) \, \textup{d} u_1 \textup{d} u_2 .
\]
By \eqref{eq:1.5}, that is the same as $f \mapsto \int_{U_{\overline Q}} 
(\lambda_{u_3} f) \textup{d} u_3$. The transitivity of parabolic induction 
\cite[Lemme VI.1.4]{Ren} says that there are natural isomorphisms
\begin{equation}
I_P^G \big( I_{Q \cap L}^L (\sigma \otimes \chi) \big) \cong I_{Q}^G (\sigma \otimes \chi) \quad
\text{and} \quad I_{\overline{P}}^G \big( I_{\overline{Q} \cap L}^L (\sigma \otimes \chi) \big) 
\cong I_{\overline{Q}}^G (\sigma \otimes \chi) .
\end{equation}
Therefore \eqref{eq:1.6} can be identified with 
\[
J_{\overline{Q} | Q} (\sigma \otimes \chi) : I_{Q}^G (\sigma \otimes \chi)
\to I_{\overline{Q}}^G (\sigma \otimes \chi) .
\]
In the same way one can check that
\begin{equation}\label{eq:1.7}
J_{P | \overline P} \big( I_{Q \cap L}^L (\sigma \otimes \chi) \big) \circ 
I_{\overline P}^G (J_{Q | \overline Q} (\sigma \otimes \chi)) = 
J_{Q | \overline{Q}} (\sigma \otimes \chi) : I_{\overline{Q}}^G (\sigma \otimes
\chi) \to I_{Q}^G (\sigma \otimes \chi) .
\end{equation}
Combining \eqref{eq:1.7} and the two expressions for \eqref{eq:1.6}, we compute
\begin{align*}
j_{G,M}(\sigma \otimes \chi) \mr{id} & = J_{Q | \overline{Q}} (\sigma \otimes \chi) 
J_{\overline{Q} | Q} (\sigma \otimes \chi) = \\
J_{P | \overline P} (I_{Q \cap L}^L (\sigma \otimes & \chi)) 
I_{\overline P}^G (J_{Q \cap L | \overline Q \cap L} (\sigma \otimes \chi)) 
I_{\overline P}^G (J_{\overline Q \cap L | Q \cap L} (\sigma \otimes \chi)) 
J_{\overline P | P} \big( I_{Q \cap L}^L (\sigma \otimes \chi) \big) \\
& = J_{P | \overline P} \big( I_{Q \cap L}^L (\sigma \otimes \chi) \big) 
I_{\overline P}^G \big( j_{L,M} (\sigma \otimes \chi) \mr{id} \big) 
J_{\overline P | P} \big( I_{Q \cap L}^L (\sigma \otimes \chi) \big) \\
& = j_{L,M} (\sigma \otimes \chi) J_{P | \overline P} \big( I_{Q \cap L}^L (\sigma \otimes 
\chi) \big) J_{\overline P | P} \big( I_{Q \cap L}^L (\sigma \otimes \chi) \big) \\
& = j_{L,M} (\sigma \otimes \chi) j_{G,L} 
\big( I_{Q \cap L}^L (\sigma \otimes \chi) \big) \mr{id} .
\end{align*}
(b) Recall that $\mu_{G,L} = c(G|L)^2 \gamma (G|L)^2 j_{G,L}^{-1}$. It follows from
\cite[p. 241]{Wal} that $\gamma (G|L) = \gamma (G|M) \gamma (M|L)$, but $c(G|L)$ need
not satisfy such a relation. Thus Lemma \ref{lem:1.1} and part (a) entail
\[
\frac{\mu_{G,L}(\pi \otimes \chi)}{c(G|L)^2} = 
\frac{\mu_{G,L}(I_{Q \cap L}^L (\sigma \otimes \chi))}{c(G|L)^2} = 
\frac{\mu_{G,M} (\sigma \otimes \chi)}{\mu_{L,M}(\sigma \otimes \chi)}
\frac{c(L|M)^2}{c(G|M)^2} . \qedhere
\]
\end{proof}

Propositon \ref{prop:1.2} enables us to reduce the computation of $\mu$-functions to the case of cuspidal representations, which is already well-understood.

Let $\sigma \in \Irr (M)$ be cuspidal. For $\alpha \in \Phi (G,A_M)^+$, let 
$h_\alpha^\vee \in M / M^1$ be as in \cite[Appendix]{SolEnd} and \cite{FlSo}. This
element $h_\alpha^\vee$ depends on $X_\nr (M) \sigma$ and plays a role similar to
a coroot $\alpha^\vee$. If $N_{M_\alpha}(M) \neq M$, we pick an element 
$s_\alpha \in N_{M_\alpha}(M) \setminus M$.

\begin{thm}\label{thm:6.2} 
\textup{\cite[Theorem 1.6]{Sil3} and \cite[Theorem 1.2]{FlSo}} 
\enuma{
\item If $j_{M_\alpha,M}$ does not have a pole on $X_\nr (M) \sigma$, then
it equals a constant function $c_\alpha \in \R_{>0}$ on $X_\nr (M) \sigma$.
This happens whenever $N_{M_\alpha} (M) = M$ or $N_{M_\alpha} (M) \neq M$ and 
$s_\alpha$ does not stabilize $X_\nr (M) \sigma$.
\item Suppose that $j_{M_\alpha,M}$ has a pole on $X_\nr (M) \sigma$. By moving $\sigma$ 
inside $X_\nr (M) \sigma$, we can arrange that $\sigma$ is unitary, $j_{G,M}(\sigma) = 
\infty$ and $s_\alpha$ fixes $\sigma$. Then there exist $c_\alpha \in \R_{>0},
q_\alpha \in \R_{>1}, q_{\alpha *} \in \R_{\geq 1}$ such that
\[
j_{M_\alpha,M}(\sigma \otimes \chi) = c_\alpha \frac{(1 - q_\alpha \chi (h_\alpha^\vee))
(1 - q_\alpha \chi (h_\alpha^\vee)^{-1})}{(1 - \chi (h_\alpha^\vee))
(1 - \chi (h_\alpha^\vee)^{-1})} \frac{(1 + q_{\alpha *} \chi (h_\alpha^\vee))
(1 + q_{\alpha *} \chi (h_\alpha^\vee)^{-1})}{(1 + \chi (h_\alpha^\vee))
(1 + \chi (h_\alpha^\vee)^{-1})} 
\]
for all $\chi \in X_\nr (M)$.
}
\end{thm}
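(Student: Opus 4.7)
My plan is to exploit the essentially one-parameter nature of the setting. Since $M$ is a maximal proper Levi of $M_\alpha$, the split torus $A_M / A_{M_\alpha}$ is one-dimensional, so the quotient $X_\nr (M) / X_\nr (M_\alpha)$ is a one-dimensional complex torus with coordinate $z := \chi (h_\alpha^\vee)$. A preliminary reduction is that $j_{M_\alpha,M}(\sigma \otimes \chi)$ depends only on $z$: twisting $\sigma$ by a character of $X_\nr (M_\alpha)$ multiplies both intertwining operators in the composition \eqref{eq:6.9} by the same scalar, leaving their product unchanged. Hence we may view $j_{M_\alpha,M}(\sigma \otimes \chi)$ as a rational function of $z \in \C^\times$.

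For part (a), under either hypothesis on $N_{M_\alpha}(M)$, the rank-one Harish-Chandra theory for cuspidal $\sigma$ implies that $I_{M U_\alpha}^{M_\alpha}(\sigma \otimes \chi)$ is irreducible for every $\chi \in X_\nr (M)$: the only mechanism producing reducibility in the cuspidal case is an $R$-group arising from an element of $N_{M_\alpha}(M) \setminus M$ that stabilizes the orbit, which is excluded in both subcases. Consequently $J_{M U_{-\alpha} | M U_\alpha}(\sigma \otimes \chi)$ is, whenever defined, a nonzero scalar times an isomorphism, so $j_{M_\alpha,M}$ is both pole-free and zero-free on $X_\nr (M)\sigma$. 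As a rational function of $z$ it is therefore an invertible Laurent polynomial, i.e.\ of the form $c z^n$. Unitarity of the normalized intertwiner on the unitary subtorus $|z|=1$ pins down $n = 0$, and the positivity $c_\alpha \in \R_{>0}$ follows from \eqref{eq:1.3} together with the positivity of the Plancherel density $\mu_{M_\alpha,M}$ on tempered representations.

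For part (b), the existence of a pole of $j_{M_\alpha,M}$ on $X_\nr (M)\sigma$ excludes the hypotheses of (a), so $s_\alpha$ exists and stabilizes the orbit. Using Harish-Chandra's standard reduction, I move $\sigma$ inside its orbit to a unitary representative at which $s_\alpha \cdot \sigma \cong \sigma$ and $j_{G,M}(\sigma) = \infty$; the self-conjugacy singles out this privileged point. The $s_\alpha$-fixedness then yields a functional equation forcing the rational function of $z$ to be invariant under $z \mapsto z^{-1}$, so its poles and zeros come in pairs $\{z_0, z_0^{-1}\}$. Each pole corresponds to a reducibility point of $I_{M U_\alpha}^{M_\alpha}(\sigma \otimes \chi)$, and the rank-one $R$-group analysis together with the Hecke-algebra framework of \cite{SolEnd,FlSo} shows that the only such reducibilities occur at $z = \pm 1$, giving the two denominator factors. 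The zeros are similarly located at $z = \pm q_\alpha^{\pm 1}$ and $z = \pm q_{\alpha *}^{\pm 1}$, where $q_\alpha, q_{\alpha *}$ are the $q$-parameters of the two rank-one Hecke algebras attached to the two possible extensions of $\sigma$ across $s_\alpha$. An overall normalization at one convenient value of $z$ produces $c_\alpha$.

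The main obstacle is the precise placement and counting of poles and zeros in part (b): ruling out additional factors of the form $(1 - \zeta z)(1 - \zeta z^{-1})$ for other $\zeta \in \C^\times$, and justifying that the two "sign-type" parameters $q_\alpha$ and $q_{\alpha *}$ are genuinely the Hecke parameters arising from the two extensions. Cuspidality of $\sigma$ is indispensable here: it reduces all reducibility on the orbit to the rank-one Hecke algebra attached to $X_\nr (M)\sigma$, a finite-dimensional object whose structure is fully described in \cite{Sil3} in the essentially square-integrable case and extended to all cuspidal $\sigma$ in \cite{FlSo}.
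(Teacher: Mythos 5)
The paper does not actually prove this theorem: it is imported verbatim from \cite[Theorem 1.6]{Sil3} and \cite[Theorem 1.2]{FlSo}, so your sketch has to stand on its own, and two of its pivotal steps do not. In part (a) you infer from irreducibility of every $I_{M U_\alpha}^{M_\alpha}(\sigma \otimes \chi)$ that $j_{M_\alpha,M}$ is pole-free on the orbit. That inference is invalid: a pole of $j_{M_\alpha,M}$ (equivalently a zero of $\mu_{M_\alpha,M}$) is perfectly compatible with irreducibility --- in the situation of part (b) the poles at $\chi(h_\alpha^\vee) = \pm 1$ occur exactly at unitary points where the induced representation \emph{is} irreducible, since by Proposition \ref{prop:2.1} (Case I) reducibility at an $s_\alpha$-fixed unitary point requires $\mu_{M_\alpha,M} \neq 0$. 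What is really needed is the separate meromorphy statement that the rank-one operator $J_{M U_{-\alpha}|M U_\alpha}(\sigma \otimes \chi)$ can only have a pole where $s_\alpha$ stabilizes $\sigma \otimes \chi$ (this is how \cite[p. 283]{Wal} is used in the proof of Lemma \ref{lem:7.7}); that is not a corollary of irreducibility. A smaller point: $|c z^n|$ is constant on $|z|=1$ for every $n$, so "unitarity on the unitary subtorus" does not pin down $n=0$; you need reality and positivity of $j$ there, coming from $J_{P|\bar P}(\sigma\otimes\chi) = J_{\bar P|P}(\sigma\otimes\chi)^*$ at unitary points.

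In part (b) your mechanism for locating the denominator is reversed. Poles of $j_{M_\alpha,M}$ are zeros of $\mu_{M_\alpha,M}$, and these are not reducibility points: by Theorem \ref{thm:6.7} the reducibility points carrying essentially square-integrable subquotients are the \emph{zeros} of $j_{M_\alpha,M}$ (poles of $\mu$), while at unitary $s_\alpha$-fixed points $\mu = 0$ means irreducible. With the dictionary corrected, the actual content of the theorem --- that the poles sit exactly at $\chi(h_\alpha^\vee) = \pm 1$ with the stated multiplicities, that the zeros are real and of the form $q_\alpha^{\pm 1}$ and $-q_{\alpha*}^{\pm 1}$ with $q_\alpha > 1$, $q_{\alpha*} \geq 1$ (your list $\pm q_\alpha^{\pm1}$, $\pm q_{\alpha*}^{\pm1}$ already over-counts against the degree of the denominator), and even that $j$ depends only on $z = \chi(h_\alpha^\vee)$ (the map $X_\nr(M) \to \C^\times$, $\chi \mapsto \chi(h_\alpha^\vee)$, need not separate the relevant points) --- is exactly what \cite{Sil3} and \cite{FlSo} prove, and your sketch defers these to those references, which is circular if offered as a proof. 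The general shape you set up (one-variable rational function, $z \leftrightarrow z^{-1}$ symmetry, positivity on the unitary axis) is right, but the placement and counting of zeros and poles, which is the heart of the statement, is not established and is partly argued from a false correspondence.
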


In Theorem \ref{thm:6.2}.b, $q_{\alpha *} = 1$ if $2 \alpha$ is not a root of $(G,A_M)$. 
Theorem \ref{thm:6.2}.a can be described by the same formula as part b, 
namely with $q_\alpha = q_{\alpha *} = 1$. \\

Consider a cuspidal Bernstein component $X_\nr (M) \sigma'$ in $\Irr (M)$. Let\\
$\Phi (G,A_M, X_\nr (M) \sigma')$ be the set of those $\alpha \in \Phi (G,A_M)$ for which 
$\mu_{M_\alpha,M}$ has a zero (or equivalently is not constant) on $X_\nr (M) \sigma'$. 
By \cite[Proposition 1.3]{Hei2}, $\Phi (G,A_M, X_\nr (M) \sigma')$ is a reduced root system
whose Weyl group embeds canonically in $N_G (M) / M$. 
The following result helps us to apply Theorem \ref{thm:6.2} simultaneously to 
several roots from $\Phi (G,A_M, X_\nr (M) \sigma')$.

\begin{lem}\label{lem:1.3}
There exists a unitary $\sigma \in X_\nr (M) \sigma'$ such that 
$\mu_{M_\alpha,M}(\sigma) = 0$ and $s_\alpha \cdot \sigma \cong \sigma$ for all 
$\alpha \in \Phi (G,A_M, X_\nr (M) \sigma')$.
\end{lem}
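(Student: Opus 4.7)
The plan is to construct $\sigma$ as a simultaneous unitary fixed point of the Weyl group $W_{\sigma'}$ of the root system $\Phi_{\sigma'} := \Phi(G,A_M,X_\nr(M)\sigma')$, and then to derive the vanishing $\mu_{M_\alpha,M}(\sigma) = 0$ from Theorem \ref{thm:6.2}.b applied with $\sigma$ as basepoint. By \cite[Proposition 1.3]{Hei2}, $W_{\sigma'}$ is a finite reflection group acting on the Bernstein variety $X_\nr(M)\sigma'$ via elements of $N_G(M)/M$; since conjugation by these elements preserves unitarity of $M$-representations, the action stabilizes the unitary locus (the compact real form of the Bernstein torus).

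The main construction proceeds inductively along a basis $\Delta = \{\alpha_1,\ldots,\alpha_r\}$ of $\Phi_{\sigma'}$. First, I would apply Theorem \ref{thm:6.2}.b to $\alpha_1$, obtaining a unitary $\sigma_1 \in X_\nr(M)\sigma'$ with $s_{\alpha_1}\sigma_1 \cong \sigma_1$ and $\mu_{M_{\alpha_1},M}(\sigma_1) = 0$. I would then restrict attention to the $s_{\alpha_1}$-fixed codimension-one subtorus of the unitary locus through $\sigma_1$, and apply Theorem \ref{thm:6.2}.b to $\alpha_2$ \emph{within} this sublocus, translating $\sigma_1$ by an $s_{\alpha_1}$-fixed unitary unramified character to additionally arrange $s_{\alpha_2}\sigma \cong \sigma$ and $\mu_{M_{\alpha_2},M}(\sigma) = 0$. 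Iterating over all $\alpha_i \in \Delta$ yields a unitary $\sigma$ fixed by every simple reflection, hence by all of $W_{\sigma'}$, and satisfying $\mu_{M_{\alpha_i},M}(\sigma) = 0$ for $i = 1,\ldots,r$. For an arbitrary $\alpha = w\alpha_i \in \Phi_{\sigma'}$, the $W_{\sigma'}$-equivariance of the $\mu$-function together with $w\sigma \cong \sigma$ yields $\mu_{M_\alpha,M}(\sigma) = \mu_{M_{\alpha_i},M}(w^{-1}\sigma) = \mu_{M_{\alpha_i},M}(\sigma) = 0$.

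The hard part will be the inductive step: on each successively shrinking fixed subtorus I must verify that the relevant $\mu$-function is still non-constant, so that Theorem \ref{thm:6.2}.b is actually applicable at the next stage. This amounts to showing that the ``coroot-like'' functionals $\chi \mapsto \chi(h_{\alpha_i}^\vee)$ are linearly independent on $X_\nr(M)$ modulo the stabilizer of $\sigma'$, which ought to follow from the root-system axioms for $\Phi_{\sigma'}$ established in \cite[Proposition 1.3]{Hei2} combined with the construction of the elements $h_\alpha^\vee$ in \cite{SolEnd,FlSo}, but requires careful bookkeeping of coroots and unramified characters. Should this bookkeeping collapse, an alternative is to bypass the induction and directly locate a $W_{\sigma'}$-fixed unitary point by a compact-group averaging argument, and then argue separately (using that on the $W_{\sigma'}$-fixed locus each $\chi(h_\alpha^\vee)$ is forced into $\{\pm 1\}$ by the reflection identity $s_\alpha \chi = \chi$) that $\mu_{M_\alpha,M}$ automatically vanishes there.
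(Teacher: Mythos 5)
There is a genuine gap, in fact two. First, your main inductive construction defers exactly the point that the lemma requires: Theorem \ref{thm:6.2}.b is a statement about a \emph{single} root $\alpha$, and its proof normalizes the basepoint by moving $\sigma$ freely inside the whole Bernstein component; it gives you no license to apply it ``within'' the $s_{\alpha_1}$-fixed sublocus while retaining the conditions already arranged for $\alpha_1$. Your acknowledgement that the bookkeeping of the $h_{\alpha_i}^\vee$ ``ought to follow'' is precisely the content that has to be proved, so as written the induction does not close. Second, your fallback is based on a false implication: for a unitary point, $s_\alpha \cdot \sigma \cong \sigma$ does \emph{not} force $\mu_{M_\alpha,M}(\sigma) = 0$. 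Fixedness up to isomorphism only constrains $\chi (h_\alpha^\vee)$ modulo the stabilizer $X_\nr (M,\sigma)$, and even when it forces $\chi (h_\alpha^\vee) = \pm 1$, the value $-1$ is not a zero of $\mu_{M_\alpha,M}$ when $q_{\alpha *} = 1$ (the corresponding factors cancel in Theorem \ref{thm:6.2}.b). Indeed, unitary $\sigma$ with $s_\alpha \sigma \cong \sigma$ and $\mu_{M_\alpha,M}(\sigma) \neq 0$ is exactly the nontrivial R-group situation, which the paper itself exhibits in Proposition \ref{prop:2.1}.a (length two when $s_\alpha (\sigma \otimes \nu) \cong \sigma \otimes \nu$ and $\mu \neq 0$). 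So ``averaging to a $W$-fixed point and reading off vanishing'' cannot work.

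The correct implication runs in the opposite direction, and this is how the paper argues: fix a positive system, use the linear independence of the \emph{simple} roots of $\Phi (G,A_M, X_\nr (M)\sigma')$ to solve the finitely many equations $\mu_{M_\alpha,M}(\sigma) = 0$ simultaneously at a unitary point (as observed in \cite{Hei2}); then invoke Silberger \cite[\S 5.4.2]{Sil0}, which says that vanishing of $\mu_{M_\alpha,M}$ at a unitary point forces $s_\alpha \sigma \cong \sigma$; hence the whole Weyl group $W(\Phi (G,A_M,X_\nr (M)\sigma'))$ fixes $\sigma$, and the vanishing is transported from simple roots to all roots by conjugating intertwining operators, $j_{M_\beta,M}(\sigma \otimes \chi) = j_{M_\alpha,M}(\sigma \otimes w^{-1}\chi)$ for $\beta = w(\alpha)$. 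Your final transport step is the same as the paper's; the missing ingredient in your write-up is Silberger's theorem, which replaces both your unproven inductive step and the flawed fixed-locus argument.
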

\begin{proof}
A parabolic subgroup $P' = M U_{P'} \subset G$ determines which roots in\\
$\Phi (G,A_M, X_\nr (M) \sigma')$ are positive and
which are simple. The simple roots are linearly independent so, as already observed in 
\cite{Hei2}, one can find a unitary $\sigma \in X_\nr (M) \sigma'$ such that 
$\mu_{M_\alpha,M}(\sigma) = 0$ for all simple $\alpha \in \Phi (G,A_M, X_\nr (M) \sigma')$.
By \cite[\S 5.4.2]{Sil0}, $s_\alpha \sigma \cong \sigma$ for all such $\alpha$. Hence
$W \big( \Phi (G,A_M, X_\nr (M) \sigma') \big)$ fixes $\sigma$ (up to isomorphism).
Given any $\beta \in \Phi (G,A_M, X_\nr (M) \sigma')$, there exists a 
\[
w \in W \big( \Phi (G,A_M, X_\nr (M) \sigma') \big) \subset N_G (M) / M
\]
such that $\beta = w (\alpha)$ for a simple root $\alpha$. Via an isomorphism
$w^{-1} \sigma \cong \sigma$, we can identify
\begin{align*}
J_{M U_{-\beta} | M U_{\beta}}(\sigma \otimes \chi) & = w \circ J_{M U_{-\alpha} | M U_\alpha} 
(w^{-1} (\sigma \otimes \chi)) \circ w^{-1} \\
& = w \circ J_{M U_{-\alpha} | M U_\alpha} (\sigma \otimes w^{-1} \chi) \circ w^{-1} .
\end{align*}
The same holds for $-\beta$, which entails that 
\begin{equation}\label{eq:1.8}
j_{M_\beta,M}(\sigma \otimes \chi) = j_{M_\alpha,M}(\sigma \otimes w^{-1} \chi) \quad
\text{for all} \quad \chi \in X_\nr (M). 
\end{equation}
By \eqref{eq:1.3} and \eqref{eq:1.8} we have
$\mu_{M_\beta,M}(\sigma) = \mu_{M_\alpha,M}(\sigma) = 0$.
\end{proof}

We are ready to state an explicit formula for Harish-Chandra's function $\mu_{G,L}$, for 
any irreducible $L$-representation. 

\begin{thm}\label{thm:1.4}
Let $\pi \in \Irr (L)$. Suppose that $(M,\sigma \otimes \chi_\pi)$ represents the cuspidal
support of $\pi$, where $\sigma$ is as in Lemma \ref{lem:1.3} and $\chi_\pi \in X_\nr (M)$. 
Then there exists $c \in \R_{>0}$, depending only on $X_\nr (M) \sigma$ and $G$, such that
\begin{multline*}
\mu_{G,L}(\pi \otimes \chi) = c
\prod_{\alpha \in \Phi (G,A_M)^+ \setminus \Phi (L,A_M)^+} 
\frac{(1 - (\chi_\pi \chi) (h_\alpha^\vee)) (1 - (\chi_\pi \chi) 
(h_\alpha^\vee)^{-1})}{(1 - q_\alpha (\chi_\pi \chi)
(h_\alpha^\vee)) (1 - q_\alpha (\chi_\pi \chi)(h_\alpha^\vee)^{-1})} \\
\cdot \frac{(1 + (\chi_\pi \chi) (h_\alpha^\vee))
(1 + (\chi_\pi \chi) (h_\alpha^\vee)^{-1})} {(1 + q_{\alpha *} (\chi_\pi \chi) (h_\alpha^\vee))
(1 + q_{\alpha *} (\chi_\pi \chi) (h_\alpha^\vee)^{-1})} 
\end{multline*}
as rational functions of $\chi \in X_\nr (L)$.
\end{thm}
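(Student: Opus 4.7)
The plan is to combine Proposition \ref{prop:1.2}, the product decomposition \eqref{eq:1.4}, and the explicit formula of Theorem \ref{thm:6.2}, using Lemma \ref{lem:1.3} to make the latter applicable simultaneously to all $\alpha \in \Phi(G, A_M, X_\nr(M)\sigma)$.

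First, since $(M, \sigma \otimes \chi_\pi)$ represents the cuspidal support of $\pi$, there exists a parabolic subgroup $Q \cap L$ of $L$ with Levi factor $M$ such that $\pi$ is a subquotient of $I_{Q \cap L}^L(\sigma \otimes \chi_\pi)$. I would then apply Proposition \ref{prop:1.2}(b), with the cuspidal datum $\sigma$ there taken to be $\sigma \otimes \chi_\pi$ and the twisting variable viewed via the restriction $X_\nr(L) \to X_\nr(M)$, to obtain
\[
\mu_{G,L}(\pi \otimes \chi) = \frac{\mu_{G,M}(\sigma \otimes \chi_\pi \chi)}{\mu_{L,M}(\sigma \otimes \chi_\pi \chi)} \cdot \frac{c(G|L)^2 c(L|M)^2}{c(G|M)^2}.
\]
Next I invoke the product formula \eqref{eq:1.4} for both $\mu_{G,M}$ and $\mu_{L,M}$ and cancel the factors indexed by $\alpha \in \Phi(L, A_M)^+ \subset \Phi(G, A_M)^+$, leaving
\[
\mu_{G,L}(\pi \otimes \chi) = c_0 \prod_{\alpha \in \Phi(G, A_M)^+ \setminus \Phi(L, A_M)^+} \mu_{M_\alpha, M}(\sigma \otimes \chi_\pi \chi),
\]
where $c_0 = c(G|L)^2 c(L|M)^2 / c(G|M)^2 \in \R_{>0}$.

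For each $\alpha$ appearing in the product I apply Theorem \ref{thm:6.2}. By the choice of $\sigma$ provided by Lemma \ref{lem:1.3}, for every $\alpha \in \Phi(G, A_M, X_\nr(M)\sigma)$ the hypotheses of Theorem \ref{thm:6.2}(b) are satisfied, yielding the stated rational formula for $j_{M_\alpha, M}(\sigma \otimes \chi_\pi \chi)$ in terms of $h_\alpha^\vee$, $q_\alpha$, $q_{\alpha *}$, and a constant $c_\alpha$. For $\alpha \in \Phi(G, A_M)^+ \setminus \Phi(G, A_M, X_\nr(M)\sigma)$, Theorem \ref{thm:6.2}(a) gives a constant $j_{M_\alpha, M}$, which matches the same formula under the convention $q_\alpha = q_{\alpha *} = 1$ (all four binomial ratios reduce to $1$). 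Inverting via \eqref{eq:1.3}, namely $\mu_{M_\alpha, M} = c(M_\alpha|M)^2 \gamma(M_\alpha|M)^2 \, j_{M_\alpha, M}^{-1}$, swaps numerator and denominator and produces precisely the four factors indexed by $\alpha$ in the statement. Absorbing $c_0$ together with the scalars $c(M_\alpha|M)^2 \gamma(M_\alpha|M)^2 / c_\alpha$ (finitely many, all in $\R_{>0}$) into a single constant $c$ depending only on $G$ and $X_\nr(M)\sigma$ finishes the derivation.

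The points that require attention are minor bookkeeping: (i) checking that Proposition \ref{prop:1.2}(b) and Lemma \ref{lem:1.1} apply with $\sigma$ shifted by the twist $\chi_\pi$, which is automatic since $X_\nr$-twists commute with parabolic induction and with the construction of $\mu$; (ii) handling the ``trivial'' roots from Theorem \ref{thm:6.2}(a) uniformly with the non-trivial ones via the $q_\alpha = q_{\alpha *} = 1$ convention; and (iii) verifying that the final scalar $c$ is indeed independent of $\chi$ and of the particular $\pi$ within the Bernstein component of $\sigma$. No new analytic input beyond the cited results is needed; the argument is essentially a combinatorial bookkeeping of known factorizations.
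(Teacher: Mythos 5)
Your proposal is correct and follows essentially the same route as the paper: Proposition \ref{prop:1.2} plus the product formula \eqref{eq:1.4} to reduce to the corank-one cuspidal factors $\mu_{M_\alpha,M}(\sigma\otimes\chi_\pi\chi)$ over $\alpha \in \Phi(G,A_M)^+\setminus\Phi(L,A_M)^+$, then Theorem \ref{thm:6.2} (with Lemma \ref{lem:1.3} putting $\sigma$ in the required position, and the $q_\alpha=q_{\alpha*}=1$ convention for the constant factors) inverted via \eqref{eq:1.3}. Your write-up simply spells out the bookkeeping that the paper's proof leaves implicit.
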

\begin{proof}
By Proposition \ref{prop:1.2} and \eqref{eq:1.4} we have
\[
\mu_{G,L}(\pi \otimes \chi) = \frac{c(G|L)^2 c(L|M)^2}{c(G|M)^2}
\frac{\prod_{\alpha \in \Phi (G,A_M)^+} \mu_{M_\alpha,M}(\sigma \otimes \chi_\pi \chi)}{
\prod_{\alpha \in \Phi (l,A_M)^+} \mu_{M_\alpha,M}(\sigma \otimes \chi_\pi \chi)} .
\]
Combine that with Theorem \ref{thm:6.2} and \eqref{eq:1.3}. Lemma \ref{lem:1.3} guarantees
that $\sigma$ is in the position required in Theorem \ref{thm:6.2}.b, for any
$\alpha \in \Phi (G,A_M, X_\nr (M) \sigma')$.
\end{proof}

\begin{rem}
Consider a finite central cover $\tilde G$ of the topological group $G$. The results in this 
paragraph hold just as well for $\tilde G$. The reason is that every unipotent subgroup of 
$G$ admits a canonical lifting to $\tilde G$ \cite[\S A.1]{MWa}, so that one can reason in
$\tilde G$ with unipotent subgroups exactly like in $G$. Therefore our proofs apply also 
to $\tilde G$. Theorem \ref{thm:6.2} was already proven in that generality in \cite{FlSo}.
\end{rem}

\subsection{Normalized intertwining operators} \

Consider a cuspidal $\sigma \in \Irr (M)$ and $\alpha \in \Phi (G,A_M )$ such that
$\mu_{M_\alpha,M} (\sigma) = 0$. We define a normalized version of 
$J_{M U_{-\alpha} | M U_\alpha} (\sigma \otimes \chi)$ by
\[
J'_{M U_{-\alpha} | M U_\alpha} (\sigma \otimes \chi) =
(\chi (h_\alpha^\vee) - 1) J_{M U_{-\alpha} | M U_\alpha} (\sigma \otimes \chi)
\qquad \chi \in X_\nr (M) .
\]
According to \cite[Lemme 1.8]{Hei2}, 
\begin{equation}\label{eq:1.11}
J'_{M U_{-\alpha} | M U_\alpha} (\sigma \otimes \chi) \text{ is invertible for }
\chi \text{ in a neighborhood of 1 in } X_\nr (M) .
\end{equation}
More generally, let $Q$ and $Q'$ be parabolic subgroups of $G$ with Levi factor $M$.
We define the normalization of $J_{Q'|Q}(\sigma \otimes \chi)$ as 
\[
J'_{Q'|Q}(\sigma \otimes \chi) = J_{Q'|Q}(\sigma \otimes \chi) \prod_{\alpha \in \Phi 
(U_Q ,A_M) \cap\Phi (U_{\overline{Q'}},A_M) : \mu_{M_\alpha,M}(\sigma) = 0} 
(\chi (h_\alpha^\vee) - 1) .
\]
By reduction to \eqref{eq:1.11} one shows:

\begin{prop}\label{prop:1.6}
Suppose that $\mu_{M_\alpha,M}(\sigma) \neq \infty$ for all $\alpha \in 
\Phi (U_Q ,A_M) \cap \Phi (U_{\overline{Q'}},A_M)$. Then there exists a neighborhood
$V_1$ of 1 in $X_\nr (M)$, such that 
\[
J'_{Q'|Q}(\sigma \otimes \chi) : I_Q^G (\sigma \otimes \chi) \to
I_{Q'}^G (\sigma \otimes \chi)
\]
is an isomorphism of $G$-representations for all $\chi \in V_1$.
\end{prop}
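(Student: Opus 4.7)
The plan is to reduce to the corank-one case already handled by \eqref{eq:1.11}, following the same strategy as the proof of Lemma \ref{lem:1.5}. Choose a sequence of parabolic subgroups $Q = Q_0, Q_1, \ldots, Q_d = Q'$ with common Levi factor $M$ such that consecutive ones differ by a single root, and let $\alpha_i$ denote the unique element of $\Phi(U_{Q_{i-1}},A_M) \cap \Phi(U_{\overline{Q_i}},A_M)$. By \cite[IV.1.(12) and (14)]{Wal} one obtains
\[
J_{Q'|Q}(\sigma \otimes \chi) = I_{L_{\alpha_d} Q_{d-1}}^G \bigl( J_{M U_{-\alpha_d}|M U_{\alpha_d}}(\sigma \otimes \chi) \bigr) \circ \cdots \circ I_{L_{\alpha_1} Q_0}^G \bigl( J_{M U_{-\alpha_1}|M U_{\alpha_1}}(\sigma \otimes \chi) \bigr),
\]
and $\{\alpha_1,\ldots,\alpha_d\}$ exhausts $\Phi(U_Q,A_M) \cap \Phi(U_{\overline{Q'}},A_M)$ without repetition.

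Next I treat each factor separately according to the value of $\mu_{M_{\alpha_i},M}(\sigma)$. If $\mu_{M_{\alpha_i},M}(\sigma) = 0$, then \eqref{eq:1.11} provides a neighborhood $V_i$ of $1$ in $X_\nr(M)$ on which the normalized corank-one operator $(\chi(h_{\alpha_i}^\vee) - 1) J_{M U_{-\alpha_i}|M U_{\alpha_i}}(\sigma \otimes \chi)$ is invertible; exactness of parabolic induction promotes this to invertibility of the induced operator on $V_i$. If $\mu_{M_{\alpha_i},M}(\sigma) \notin \{0,\infty\}$, then Lemma \ref{lem:1.5} applied to the single root $\alpha_i$ yields invertibility of $I_{L_{\alpha_i} Q_{i-1}}^G \bigl( J_{M U_{-\alpha_i}|M U_{\alpha_i}}(\sigma) \bigr)$ at $\chi = 1$, which propagates to a neighborhood $V_i$ by rationality of the family. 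The remaining case $\mu_{M_{\alpha_i},M}(\sigma) = \infty$ is excluded by hypothesis.

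Setting $V_1 := \bigcap_{i=1}^d V_i$, the composition is invertible on $V_1$ once we multiply by $\prod_{i : \mu_{M_{\alpha_i},M}(\sigma) = 0} (\chi(h_{\alpha_i}^\vee) - 1)$. This scalar matches the normalization in the definition of $J'_{Q'|Q}(\sigma \otimes \chi)$ because the set of indices runs precisely over those $\alpha \in \Phi(U_Q,A_M) \cap \Phi(U_{\overline{Q'}},A_M)$ with $\mu_{M_\alpha,M}(\sigma) = 0$. The main obstacle is conceptual rather than computational: one has to verify that the global normalization appearing in the definition of $J'_{Q'|Q}$ is path-independent and agrees with the product of the corank-one normalizations along any chosen sequence of minimally adjacent parabolics. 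This is automatic because the multiset of roots crossed is intrinsic to the pair $(Q,Q')$ and each $\alpha_i$ is crossed exactly once, so the bookkeeping works out.
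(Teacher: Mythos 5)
Your argument is correct and is exactly the reduction the paper has in mind: it only says ``by reduction to \eqref{eq:1.11}'', and your decomposition into corank-one steps via \cite[IV.1.(12) and (14)]{Wal}, with Heiermann's lemma for the roots where $\mu_{M_\alpha,M}(\sigma)=0$ and the $j$-function argument of Lemma \ref{lem:1.5} (extended to nearby $\chi$ by rationality) for the roots where $\mu_{M_\alpha,M}(\sigma)\in\C^\times$, fills in that sketch faithfully, including the observation that the normalization factors match because each root of $\Phi(U_Q,A_M)\cap\Phi(U_{\overline{Q'}},A_M)$ is crossed exactly once. (Only a notational quibble: the Levi subgroups you write as $L_{\alpha_i}$ are the $M_{\alpha_i}$ of the paper.)
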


There are also normalized intertwining operators for non-cuspidal 
representations. Let $\pi \in \Irr (L)$ be a subquotient of $I_{Q \cap L}^Q (\sigma)$
and write $P = Q L$. For another parabolic subgroup $P' \subset G$ with Levi
factor $L$ we define
\begin{equation} \label{eq:1.12}
J'_{P'|P}(\pi \otimes \chi) =  J'_{P'|P}(\pi \otimes \chi) 
\prod_{\alpha \in \Phi (U_P,A_M) \cap \Phi (U_{\overline{P'}}, A_M) : 
\mu_{M_\alpha,M}(\sigma) = 0} (\chi (h_\alpha^\vee) - 1) .
\end{equation}

\begin{thm}\label{thm:1.7}
Suppose that $\mu_{M_\alpha,M}(\sigma) \neq \infty$ for all $\alpha \in 
\Phi (U_P ,A_M) \cap \Phi (U_{\overline{P'}},A_M)$. Then there exists a neighborhood
$V'_1$ of 1 in $X_\nr (L)$, such that 
\[
J'_{P'|P}(\pi \otimes \chi) : I_P^G (\pi \otimes \chi) \to 
I_{P'}^G (\pi \otimes \chi)
\]
is an isomorphism of $G$-representations, for all $\chi \in V'_1$.
\end{thm}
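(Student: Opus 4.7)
The plan is to reduce Theorem \ref{thm:1.7} to the cuspidal statement of Proposition \ref{prop:1.6} via transitivity of parabolic induction, and then descend from the induced representation $I_{Q \cap L}^L(\sigma)$ to its subquotient $\pi$ by naturality of intertwining operators. Set $R = Q \cap L$, a parabolic of $L$ with Levi $M$, and consider the parabolics of $G$ with Levi $M$
\[
\widetilde Q = R U_P = Q \subset P, \qquad \widetilde Q' = R U_{P'} \subset P'.
\]
Using $U_{\widetilde Q} = U_R U_P$, $U_{\overline{\widetilde Q}'} = U_{\overline R} U_{\overline{P'}}$, and the fact that $\Phi(L, A_M)$ is disjoint from $\Phi(U_P, A_M) \cup \Phi(U_{P'}, A_M)$, one verifies
\[
\Phi(U_{\widetilde Q}, A_M) \cap \Phi(U_{\overline{\widetilde Q}'}, A_M) = \Phi(U_P, A_M) \cap \Phi(U_{\overline{P'}}, A_M).
\]
Hence the hypothesis of the theorem implies the hypothesis of Proposition \ref{prop:1.6} for $(\widetilde Q, \widetilde Q', \sigma)$, producing an open neighborhood $V_1$ of $1$ in $X_\nr(M)$ on which $J'_{\widetilde Q'|\widetilde Q}(\sigma \otimes \eta)$ is an isomorphism.

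As in the proof of Proposition \ref{prop:1.2}, transitivity of parabolic induction identifies $I_{\widetilde Q}^G(\sigma \otimes \eta) = I_P^G(I_R^L(\sigma \otimes \eta))$ and $I_{\widetilde Q'}^G(\sigma \otimes \eta) = I_{P'}^G(I_R^L(\sigma \otimes \eta))$, under which $J_{\widetilde Q'|\widetilde Q}(\sigma \otimes \eta)$ and $J_{P'|P}(I_R^L(\sigma \otimes \eta))$ both reduce to integration over $(U_P \cap U_{P'}) \setminus U_{P'}$ and so coincide. Since the normalizations in \eqref{eq:1.12} and in the cuspidal definition run over the same index set by the root equality above, the normalized operators agree as well. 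Pulling back along the continuous restriction $\chi \mapsto \chi|_M$ from $X_\nr(L)$ to $X_\nr(M)$, and using $I_R^L(\sigma) \otimes \chi \cong I_R^L(\sigma \otimes \chi|_M)$, yields an open neighborhood $V'_1$ of $1$ in $X_\nr(L)$ on which $J'_{P'|P}(I_R^L(\sigma) \otimes \chi)$ is an isomorphism.

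To descend to $\pi$, write $\pi = \pi_1/\pi_2$ with $\pi_2 \subset \pi_1 \subset I_R^L(\sigma)$. The operators $J_{P'|P}$ and $J_{P|P'}$ are natural in the inducing $L$-representation (their defining integrals commute with $L$-equivariant morphisms), and on $I_R^L(\sigma) \otimes \chi$ their composition equals $j_{G,L}(I_R^L(\sigma) \otimes \chi) \cdot \mathrm{id}$ by Lemma \ref{lem:1.1}. Restricting this composition to $I_P^G(\pi_i \otimes \chi)$ and using naturality of both factors yields the same scalar identity there; together with the invertibility of $J'_{P'|P}$ on $I_R^L(\sigma) \otimes \chi$, this forces $J'_{P'|P}(\pi_i \otimes \chi)$ to be an isomorphism for $\chi \in V'_1$ and $i=1,2$. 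Exactness of $I_P^G$ and $I_{P'}^G$ then gives the induced isomorphism on $\pi = \pi_1/\pi_2$. The hardest step is this descent: one must verify that the normalized rational scalar $J'_{P|P'} \circ J'_{P'|P}$ is nonzero on $V'_1$, which is controlled by the explicit $\mu$-function formulas of Theorems \ref{thm:1.4} and \ref{thm:6.2} comparing the poles of $j_{G,L}$ with the factors $(\chi(h_\alpha^\vee) - 1)$ in \eqref{eq:1.12}, and may alternatively be addressed by a corank-one decomposition of $J'_{P'|P}$ at the $L$-level in the spirit of Lemma \ref{lem:1.5}, reducing each factor to the setting of \eqref{eq:1.11}.
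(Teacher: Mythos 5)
Your proposal follows essentially the same route as the paper: you construct the parabolic $\widetilde Q' = (Q\cap L)\,U_{P'}$ with Levi factor $M$ (this is exactly the paper's $Q'$, since $Q'\cap L=Q\cap L$ and $P'=Q'L$), check that the crossed roots for $(\widetilde Q,\widetilde Q')$ coincide with those for $(P,P')$ so the normalization factors agree, identify $J'_{\widetilde Q'|\widetilde Q}(\sigma\otimes\chi)$ with $J'_{P'|P}\big(I_{Q\cap L}^L(\sigma\otimes\chi)\big)$ via transitivity of induction as in Proposition \ref{prop:1.2}, invoke Proposition \ref{prop:1.6}, pull back along $\chi\mapsto\chi|_M$, and finally descend to $\pi=\pi_1/\pi_2$. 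The only real divergence is the descent step. The paper restricts the isomorphism on $I_P^G\big(I_{Q\cap L}^L(\sigma)\otimes\chi\big)$ to $I_P^G(\pi_i\otimes\chi)$ (naturality of the integral formula) and passes to the quotient; the restricted maps are isomorphisms because they are injective and source and target have the same finite Jordan--H\"older content. You instead argue through the two-sided normalized scalar $J'_{P|P'}\circ J'_{P'|P}$. That also works, but with two caveats: (i) for general $P'\neq\overline P$, Lemma \ref{lem:1.1} does not say this composition equals $j_{G,L}\big(I_{Q\cap L}^L(\sigma)\otimes\chi\big)\,\mathrm{id}$; the correct scalar is the product of the corank-one factors $j_{L_\alpha,L}$ only over $\alpha\in\Phi(U_P,A_L)\cap\Phi(U_{\overline{P'}},A_L)$, obtained by decomposing the operator as in Lemma \ref{lem:1.5}; (ii) the nonvanishing of the normalized scalar near $\chi=1$, which you defer as the ``hardest step,'' does follow from Theorem \ref{thm:6.2} combined with the hypothesis $\mu_{M_\alpha,M}(\sigma)\neq\infty$ on crossed roots, but it is extra bookkeeping that the paper's shorter restriction argument avoids.
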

\begin{proof}
The set of roots $\Phi (U_{P'},A_M) \cup \Phi (U_{Q \cap L}, A_M)$ is a positive
system in $\Phi (G,A_M)$. That gives a parabolic subgroup $Q'$ of $G$ with Levi
factor $M$, such that $P' = Q' L$ and $Q' \cap L = Q \cap L$. Now
\[
\Phi (U_P ,A_M) \cap \Phi (U_{\overline{P'}},A_M) =
\Phi (U_Q ,A_M) \cap \Phi (U_{\overline{Q'}},A_M) ,
\]
which means that the normalization factors $\prod_\alpha (\chi (h_\alpha^\vee) - 1)$
are the same for $J_{Q'|Q}(\sigma \otimes \chi)$ and $J_{P'|P} \big( I_{Q \cap L}^L
(\sigma \otimes \chi) \big)$. With an argument like in \eqref{eq:1.6}--\eqref{eq:1.7} 
we obtain
\[
J'_{Q'|Q}(\sigma \otimes \chi) = J'_{QL'|QL} \big( I_{Q \cap L}^L (\sigma \otimes 
\chi) \big) : I_Q^G (\sigma \otimes \chi) \to I_{Q'}^G (\sigma \otimes \chi) .
\]
By Proposition \ref{prop:1.6}, $J'_{Q'|Q}(\sigma \otimes \chi)$ is an isomorphism
for $\chi \in V_1 \subset X_\nr (M)$. Hence 
\[
J'_{P'|P} \big( I_{Q \cap L}^L (\sigma) \otimes \chi \big) = 
J'_{Q'L|QL} \big( I_{Q \cap L}^L (\sigma \otimes \chi) \big)
\]
is an isomorphism for $\chi \in V'_1 := \{ \chi \in X_\nr (L) : \chi |_M \in V_1 \}$.
Pick subrepresentations $\pi_1,\pi_2$ of $I_{Q \cap L}^L (\sigma)$ such that 
$\pi = \pi_1 / \pi_2$. By the above
\[
J'_{P'|P} (\pi_i \otimes \chi) : I_P^G (\pi_i \otimes \chi) \to 
I_{P'}^G (\pi_i \otimes \chi) \qquad i = 1,2, \chi \in V'_1 
\]
are isomorphisms. The map for $i=1$ extends the map for $i=2$, and passing to the
quotient $\pi = \pi_1 / \pi_2$ we find that
\[
J'_{P'|P}(\pi \otimes \chi) : I_P^G (\pi \otimes \chi) \to I_{P'}^G (\pi \otimes
\chi) \qquad \chi \in V'_1 
\]
is also an isomorphism.
\end{proof}

\subsection{Residual points of the $\mu$-functions} \

From \eqref{eq:1.3} and \eqref{eq:1.4} we see that $\mu_{G,M}$ can only have 
a pole at $\sigma \otimes \chi$ if at least one of the functions $j_{M_\alpha, M}$ 
has a zero at $\sigma \otimes \chi$. Then 
\[
J_{M U_\alpha | M U_{-\alpha}} (\sigma \otimes \chi) 
J_{M U_{-\alpha} | M U_\alpha} (\sigma \otimes \chi) = 0
\]
and both factors are nonzero, so both $J_{M U_\alpha | M U_{-\alpha}} (\sigma 
\otimes \chi)$ and $J_{M U_{-\alpha} | M U_\alpha} (\sigma \otimes \chi)$ are not
injective. We write 
\[
r_M = \dim_F Z(M) - \dim_F Z(G) = \dim_F A_M - \dim_F A_G .
\]
\begin{defn}
A representation $\sigma \otimes \chi$ at which $\mu_{G,M}$ has
a pole of order $r_M$ is called a residual point of $\mu_{G,M}$.     
\end{defn}
The function $\mu_{G,M}$ has no poles of order $> r_M$ \cite[Corollaire 8.6]{Hei1}.

\begin{thm}\label{thm:6.7}
\textup{\cite[Th\'eor\`eme 8.6 and Corollaire 8.7]{Hei1}} \\
The representation $I_Q^G (\sigma \otimes \chi)$ has an essentially square-integrable
subquotient if and only if $\mu_{G,M}$ has a pole of order $r_M$ at $\sigma \otimes \chi$. 
Such a subquotient is square-integrable modulo centre if and only if
$|cc(\sigma \otimes \chi) |_{Z(G)}  = 1$.
\end{thm}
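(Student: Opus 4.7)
The plan is to follow Heiermann's approach from \cite{Hei1}, which is based on Harish-Chandra's Plancherel formula and residue calculus on intertwining operators. The underlying principle is that the Plancherel density on the family $\{I_Q^G(\sigma \otimes \chi) : \chi \in X_\nr(M)\}$ is proportional to $\mu_{G,M}(\sigma \otimes \chi)$, so that the ``discrete'' summands of this family must correspond to points where this density has maximal singularity.

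First, I would translate the essential square-integrability of a subquotient into a statement about matrix coefficients using Casselman's criterion, and then into a statement about the wave packet construction. Harish-Chandra's theory expresses the projection onto the subspace spanned by wave packets of $I_Q^G(\sigma \otimes \cdot)$ as an integral against $\mu_{G,M}$ over (a twist of) the compact torus inside $X_\nr(M)$. A dimension count underlies the order condition: the Bernstein variety $X_\nr(M)\sigma$ has $F$-dimension $\dim_F A_M$, while the family of essentially square-integrable representations with fixed restriction of central character to $A_G$ is $\dim_F A_G$-dimensional, leaving exactly $r_M = \dim_F A_M - \dim_F A_G$ directions along which the Plancherel density must degenerate to a $\delta$-mass. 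This is precisely a pole of order $r_M$.

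Second, for the converse (existence of an essentially square-integrable subquotient from a pole of order $r_M$), I would argue by residue calculus. At a maximal-order pole of $\mu_{G,M}$, the composition $J_{Q|\overline Q} J_{\overline Q|Q}$, which equals $j_{G,M}(\sigma \otimes \chi)\,\mathrm{id}$ by \eqref{eq:6.36}, has a zero of order $r_M$; so at least one of $J_{\overline Q|Q}(\sigma \otimes \chi)$, $J_{Q|\overline Q}(\sigma \otimes \chi)$ has a large kernel, and a suitable residue of the intertwining operator carves out a subquotient of $I_Q^G(\sigma \otimes \chi)$ whose matrix coefficients decay fast enough to be in $L^2(G/A_G Z(G_{\mathrm{der}}))$. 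The combinatorics of the bound \cite[Corollaire~8.6]{Hei1} (that $\mu_{G,M}$ has no poles of order exceeding $r_M$) ensures that the residue construction is nondegenerate exactly when the pole is of maximal order.

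Finally, for the refinement to square-integrability modulo centre: an essentially square-integrable representation is square-integrable modulo centre iff its central character has trivial absolute value on $Z(G)$; the central character of any subquotient of $I_Q^G(\sigma \otimes \chi)$ is the restriction of the central character of $\sigma \otimes \chi$ to $Z(G)$, yielding the criterion $|cc(\sigma \otimes \chi)|_{Z(G)} = 1$. The main obstacle is the residue-calculus step, requiring careful control of the normalised intertwining operators (as developed in Proposition \ref{prop:1.6} and Theorem \ref{thm:1.7}) to guarantee both the existence of a nonzero residue and its $L^2$-integrability modulo centre; this is the technical heart of \cite[Th\'eor\`eme 8.6]{Hei1}.
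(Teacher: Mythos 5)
The paper does not prove this statement at all: Theorem \ref{thm:6.7} is quoted verbatim from Heiermann \cite[Th\'eor\`eme 8.6 et Corollaire 8.7]{Hei1}, so there is no internal argument to compare with. Your outline is an accurate summary of the route taken in that reference (residual points of $\mu_{G,M}$, residue calculus on the intertwining operators and wave packets, Casselman's criterion), and the only step you actually carry out --- that the $Z(G)$-character of any subquotient of $I_Q^G(\sigma\otimes\chi)$ is $(\sigma\otimes\chi)|_{Z(G)}$, so square-integrability modulo centre amounts to $|cc(\sigma\otimes\chi)|_{Z(G)}=1$ --- is correct and is essentially how the second assertion follows. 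Just be aware that your plan defers the genuine content (the existence of a square-integrable subquotient at a pole of maximal order, and the converse via control of exponents of Jacquet modules rather than the heuristic dimension count you sketch) back to \cite{Hei1} itself, so it is a faithful roadmap rather than an independent proof.
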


Theorem \ref{thm:6.7} says that the cuspidal supports of essentially square-inte\-gra\-ble
representations are precisely the residual points of the Harish-Chandra $\mu$-functions.
We note that $X_\nr (M) \sigma$ does not always contain residual points. A necessary
condition is that $\Phi (G,M, X_\nr (M) \sigma)$ has rank $r_M$ 
\cite[Proposition A.3.(1)]{Opd}.

\begin{lem}\label{lem:6.10}
\enuma{
\item There are only finitely many $X_\nr (G)$-orbits of re\-si\-dual points for 
$\mu_{G,M}$ in $X_\nr (M) \sigma$.
\item Suppose that $Z(G)$ is compact and that $\sigma$ is as in Lemma \ref{lem:1.3}.
Then every residual point $\sigma \otimes \chi$ satisfies a collection of equations
\[
\chi (h_\alpha^\vee) = q, \quad \text{where} \quad 
q \in \{\pm q_\alpha, \pm q_\alpha^{-1}, \pm q_{\alpha*}, \pm q_{\alpha*}^{-1}, \pm 1\}
\]
in the notation of Theorem \ref{thm:6.2} for $M_\alpha \supset M$, and $\alpha$ runs 
through a subset of $\Phi (G,M,X_\nr (M)\sigma)^+$ whose $\Q$-span has dimension $r_M$.

Conversely, this collection of equations determines $\chi$ up to a finite subgroup
of $X_\nr (M)$. 
\item In the setting of part (b), $|cc(\sigma \otimes \chi)|$ is determined by a
collection of equations 
\[
\big| cc (\sigma' \otimes \chi)(h_\alpha^{\vee N_\alpha}) \big| = 
\chi \big( h_\alpha^{\vee N_\alpha} \big) \in 
\{q_\alpha^{N_\alpha}, q_\alpha^{-N_\alpha}, q_{\alpha *}^{N_\alpha}, 
q_{\alpha *}^{-N_\alpha}, 1\}, 
\]
with the same $\alpha$ as in (b) and some $N_\alpha \in 2 \Z_{>0}$.
}    
\end{lem}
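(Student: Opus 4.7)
For part (a), I would use that the $\mu$-function is $X_\nr(G)$-invariant, since parabolic induction commutes with twisting by an unramified character of $G$ and the $\mu$-function is intrinsic. Hence $\mu_{G,M}$ descends to a rational function on the quotient torus $X_\nr(M)\sigma / X_\nr(G)$, whose complex dimension is $\dim A_M - \dim A_G = r_M$. Combining \eqref{eq:1.4} with Theorem \ref{thm:6.2} yields an explicit formula whose polar divisor is a finite union of hypersurfaces of the form $\{\chi(h_\alpha^\vee)=c\}$. By \cite[Corollaire 8.6]{Hei1}, the maximum pole order is exactly $r_M$; in the $r_M$-dimensional quotient torus, any point at which $r_M$ of these polar hypersurfaces meet is isolated, and the set of such points is therefore finite. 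This proves (a).

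For part (b), I assume $Z(G)$ compact, so $r_M = \dim A_M$ and $X_\nr(G)$ is finite, and apply Theorem \ref{thm:6.2}.b to each factor $\mu_{M_\alpha,M}$ in the product \eqref{eq:1.4}. The choice of $\sigma$ from Lemma \ref{lem:1.3} simultaneously puts $\sigma$ in the position required by Theorem \ref{thm:6.2}.b for every $\alpha \in \Phi(G,A_M,X_\nr(M)\sigma)$. Each such factor vanishes at $\chi(h_\alpha^\vee)=\pm 1$ and has simple poles at $\chi(h_\alpha^\vee)\in\{q_\alpha^{\pm 1},-q_{\alpha *}^{\pm 1}\}$. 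For the net pole order of the product at $\sigma\otimes\chi$ to attain $r_M$, bookkeeping of which numerator factors cancel which denominator factors forces $\chi$ to satisfy $r_M$ equations of the form $\chi(h_\alpha^\vee)=q$ with $q\in\{\pm q_\alpha^{\pm 1},\pm q_{\alpha *}^{\pm 1},\pm 1\}$, where the values $\pm 1$ enter precisely for those roots whose zero factors must be enlisted to prevent cancellation of genuine pole contributions of other roots. The $r_M$ surviving equations must be linearly independent, so the set of $\alpha$'s appearing $\Q$-spans a subspace of dimension $r_M$ in $X^*(A_M)\otimes\Q$. Conversely, such a system of $r_M$ independent character equations defines a finite subvariety of the $r_M$-dimensional torus $X_\nr(M)\sigma$, which is precisely the finite subgroup ambiguity claimed.

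For part (c), I take absolute values in the equations from (b). Since $\sigma$ is unitary by Lemma \ref{lem:1.3}, $|cc(\sigma)|\equiv 1$, so $|cc(\sigma\otimes\chi)| = |\chi|$ on $Z(M)$. For each $\alpha$ in the selected subset of (b), I pick an even integer $N_\alpha>0$ so large that $h_\alpha^{\vee N_\alpha}$ lands in $Z(M)$ and that raising to the $N_\alpha$-th power annihilates all sign and root-of-unity ambiguity in $\chi(h_\alpha^\vee)$ arising from the finite subgroup in (b). Then $\chi(h_\alpha^{\vee N_\alpha})$ is a positive real number equal to $|cc(\sigma\otimes\chi)(h_\alpha^{\vee N_\alpha})|$, and raising the equations of (b) to the $N_\alpha$-th power gives exactly the stated list of values. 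The main obstacle I foresee lies in part (b): one must rigorously control the cross-cancellations between numerator and denominator factors of $\mu_{M_\alpha,M}$ and $\mu_{M_\beta,M}$ across different roots, and rule out any conspiracy that could produce a pole of order $r_M$ from fewer than $r_M$ linearly independent equations. I would organize this through a transversality analysis of the irreducible components of the polar divisor in $X_\nr(M)\sigma/X_\nr(G)$, leaning on Heiermann's pole-order bounds \cite[\S 8]{Hei1}.
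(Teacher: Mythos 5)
Your overall strategy for (b) and (c) — combine the product formula \eqref{eq:1.4} with the explicit rank-one formulas of Theorem \ref{thm:6.2}, then pass to even powers $h_\alpha^{\vee N_\alpha}\in Z(M)M^1$ to extract $|cc(\sigma\otimes\chi)|$ — is the same as the paper's, and part (c) is essentially fine (you should still add the closing observation that the values on the $h_\alpha^{\vee N_\alpha}$ determine $|cc(\sigma\otimes\chi)|$ on a finite-index sublattice of $Z(M)/Z(M)^1$ and hence everywhere, because positive real numbers have unique $n$-th roots). The genuine gap is exactly the step you flag as ``the main obstacle'' and then defer to an unspecified ``transversality analysis'': you never establish that a pole of order $r_M$ forces $r_M$ \emph{linearly independent} roots among those contributing polar factors. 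Your argument for (a) — ``any point at which $r_M$ of these polar hypersurfaces meet is isolated'' — is false as stated: if the relevant $h_\alpha^\vee$ span a proper subspace of $X_*(A_M)\otimes\Q$, the intersection is positive-dimensional, and nothing in your cancellation bookkeeping excludes a pole of order $r_M$ (number of polar factors minus number of vanishing factors) along such a degenerate configuration. The fact needed here — that the defect $\#\{\text{poles}\}-\#\{\text{zeros}\}$ at a point is bounded by the rank of the span of the pole-roots, so that order $r_M$ yields $r_M$ independent $\alpha$ with $\mu_{M_\alpha,M}(\sigma)=0$ and $j_{M_\alpha,M}(\sigma\otimes\chi)=\infty$ — is a nontrivial property of these particular $\mu$-functions, not a general transversality statement, and it does not follow from Heiermann's bound \cite{Hei1} that poles have order at most $r_M$.

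The paper closes precisely this hole by appealing to Opdam's theory of residual points: part (a) is a special case of \cite[Corollary A.2]{Opd}, and the existence of $r_M$ linearly independent roots with poles at a residual point is \cite[Theorem A.7]{Opd}; everything else (translating those equations into $\chi(h_\alpha^\vee)\in\{\pm q_\alpha^{\pm 1},\pm q_{\alpha*}^{\pm 1},\pm 1\}$ via Theorem \ref{thm:6.2} and Lemma \ref{lem:1.3}, and the finite-subgroup ambiguity from the $h_\alpha^\vee$ spanning a finite-index sublattice of $M/M^1$) is the routine part that you did carry out. So your proposal is incomplete for (a) and for the first half of (b) unless you either cite such a result or reprove it, which would require a substantial root-system-specific argument rather than the divisor-geometry sketch you give.
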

\begin{proof}
(a) This is a special case of \cite[Corollary A.2]{Opd}.\\
(b) As we saw above, $\Phi (G,M,X_\nr (M) \sigma)$ must have rank $r_M = 
\dim_\C X_\nr (M) \sigma$. By part (a) and the compactness of $Z(G)$, $\mu_{G,M}$ has only 
finitely many residual points in $X_\nr (M) \sigma$. By \eqref{eq:1.3} and 
\cite[Theorem A.7]{Opd}, there exist $r_M$ linearly independent roots 
$\alpha \in \Phi (G,M,X_\nr (M)\sigma)^+$ such that 
\[
\mu_{M_\alpha,M}(\sigma) = 0 \quad \text{and} \quad 
j_{M_\alpha,M}(\sigma \otimes \chi) = \infty. 
\]
By Theorem \ref{thm:6.2} and \eqref{eq:1.3}, these equations imply that 
$\chi (h_\alpha^\vee)$ or $\chi (h_\alpha^\vee)^{-1}$ lies in $\{\pm q_\alpha,\pm q_{\alpha*} \}$. 
There may be further $\alpha \in \Phi (G,M,X_\nr (M)\sigma)^+$ with  
$\chi (h_\alpha^\vee)$  or $\chi (h_\alpha^\vee)^{-1}$ in $\{\pm q_\alpha,\pm q_{\alpha*},\pm 1\}$, 
in the notations from Theorem \ref{thm:6.2} for $j_{M_\alpha,M}$. 
We include those as equations for $\sigma \otimes \chi$.

The elements $h_\alpha^\vee$ for the $\alpha$ as above span a finite index sublattice of 
$M / M^1$. Therefore the values $\chi (h_\alpha^\vee)$ determine $\chi$ up to a finite
subgroup of $X_\nr (M)$.\\
(c) Recall from Lemma \ref{lem:1.3} that $\sigma$ is unitary. 
Since $Z(M) M^1$ has finite index in $M$, we can find $N_\alpha \in 2 \Z_{>0}$
such that $h_\alpha^{\vee N_\alpha} \in Z(M) M^1 / M^1$. For any representative
$h \in Z(M)$ of $h_\alpha^{\vee N_\alpha}$, part (b) shows that
\begin{equation}\label{eq:6.26}
|cc (\sigma' \otimes \chi)(h)| = |\chi (h)| = \big| \chi \big( h_\alpha^{\vee N_\alpha} 
\big) \big| = \chi \big( h_\alpha^{\vee N_\alpha} \big) \in \{ q^{\pm N_\alpha}, 
q'^{\pm N_\alpha}, 1 \} .
\end{equation}
We may define this number to be $\big| cc (\sigma' \otimes \chi)(h_\alpha^{\vee N_\alpha}) 
\big|$. The numbers \eqref{eq:6.26}, for all $\alpha$ as in part (b), determine
$|cc(\sigma' \otimes \chi)| \in \Hom (Z(M),\R_{>0})$ on a finite index sublattice of
$Z(M) / Z(M)^1$. Since all $n$-th roots are unique in $\R_{>0}$, that determines
$|cc(\sigma' \otimes \chi)|$ completely.
\end{proof}

\subsection{Analytic R-groups} \
\label{par:1.5}

Let $P = L U_P$ be a parabolic subgroup of $G$. The group $N_G (L)$ acts naturally on 
$\Irr (L)$, by $(n \cdot \pi)(l) = \pi (n^{-1} l n)$. This descends to an action of 
\[
W_L := N_G (L) / L
\] 
on $\Irr (L)$, which sends $X_\nr (L)$ to $X_\nr (L)$.
Let $W_{L,\pi}$ be the stabilizer of $\pi \in \Irr (L)$ in $W_L$.

Let $\delta \in \Irr (L)$ be essentially square-integrable. 
Consider the set of reduced roots $\alpha$ of $(G,A_L)$ such that Harish-Chandra's 
function $\mu_{L_\alpha,L}$ has a zero at $\delta$. These roots form a finite integral 
root system \cite{Sil5}, say $\Phi (G,A_L,\delta)$. The group
$W_{L, \delta}$ acts on $\Phi (G,A_L,\delta)$ and contains the Weyl group
$W(\Phi (G,A_L,\delta) )$ as a normal subgroup. Let $\Phi (G,A_L,\delta)^+$ be the
positive system of roots appearing in the Lie algebra of $P$. The analytic R-group
$R_\delta$ is defined as the stabilizer of $\Phi (G,A_L,\delta )^+$ in
$W_{L,\delta}$. Since $W(\Phi (G,A_L,\delta) )$ acts simply transitively
on the collection of positive systems in $\Phi (G,A_L,\delta)$, we have a decomposition
\begin{equation}\label{eq:5.5}
W_{L,\delta} = W(\Phi (G,A_L,\delta) ) \rtimes R_\delta .
\end{equation}
This is a generalization of the R-groups from \cite{Ar} because we allow
non-tempered representations $\delta$, but apart from that it is the same definition.

Every $w \in W_{L,\delta}$ gives rise to an intertwining operator
$J_\delta (w) \in \mr{Aut}_G (I_P^G (\delta))$ \cite[Lemma 1.3]{ABPS}, unique up to 
scalars. It arises from the normalized intertwining operators \eqref{eq:1.12} by a
further normalization (to make it unitary if $\delta$ is tempered) and translation 
along $w$. By results of Knapp--Stein \cite{Sil5}, and by \cite[Lemma 1.5]{ABPS} in the 
non-tempered cases, $J_\delta (w)$ is a scalar multiple of the identity if and only if
$w \in W(\Phi (G,A_L,\delta) )$. Therefore it suffices to consider the intertwining
operators $J_\delta (r)$ with $r \in R_\delta $. These operators span a twisted group 
algebra $\C [R_\delta, \natural_\delta ]$, for some 2-cocycle $R_\delta \times 
R_\delta \to \C^\times$. In other words, $J_\delta$ yields a projective representation
of $R_\delta$ on $I_P^G (\delta )$. By \cite[Theorem 1.6]{ABPS}
there is a decomposition of $\C [R_\delta,\natural_\delta] \times \C G$-modules
\begin{equation}\label{eq:7.9}
\begin{aligned}
& I_P^G (\delta ) = \bigoplus\nolimits_{\kappa \in \Irr \, \C [R_\delta , \natural_\delta]} 
\kappa \otimes I_P^G (\delta )_\kappa , \\
& I_P^G (\delta )_\kappa = \Hom_{\C [R_\delta,\natural_\delta]} (\kappa, I_P^G (\delta)).
\end{aligned}
\end{equation}
If $\delta$ is square-integrable modulo centre (so in particular tempered), then
$\C [R_\delta, \natural_\delta ]$ equals $\End_G (I_P^G (\delta))$ and all the 
representations $I_P^G (\delta )_\kappa$ are irreducible \cite{Sil5}.

\section{Quasi-standard modules} \label{sec:2}

Le $L \subset G$ be a Levi subgroup and let $S \subset L$ be a maximal $F$-split
torus. Then $S$ is the maximal split central torus in the Levi subgroup $Z_G (S)$,
and $\Phi (G,S)$ is the set of reduced roots of $(G,S)$. This is a reduced integral 
root system in $X^* (S)$, and there is a coroot system $\Phi (G,S)^\vee$ in
$X_* (S)$. We recall from \cite[\S V.3.13]{Ren} that there are canonical 
decompositions
\begin{equation}\label{eq:2.5}
\begin{array}{ccc}
X^* (S) \otimes_\Z \R & = & X^* (S \cap L_\der) \otimes_\Z \R \; \oplus \;
X^* (A_L) \otimes_\Z \R ,\\
X_* (S) \otimes_\Z \R & = & X_* (S \cap L_\der) \otimes_\Z \R \; \oplus \;
X_* (A_L) \otimes_\Z \R .
\end{array}
\end{equation}
Every $\alpha \in \Phi (G,A_L) \subset X^* (A_L)$ can be extended to an
element $\alpha_S \in \Phi (G,S)$, usually in several ways. We define 
$\alpha^\vee$ as the projection of $\alpha_S^\vee$ to $X_* (A_L) \otimes_\Z \R$
via \eqref{eq:2.5}. This does not depend on the choice of $\alpha_S$ because
$X^* (S \cap L_\der)$ is orthogonal to $X_* (A_L)$. It does not depend on $S$
either, because all maximal $F$-split tori of $L$ are conjugate.

Let $P = L U_P$ be a parabolic subgroup of $G$ with Levi factor $L$ and let $\nu 
\in \Hom (L,\R_{>0})$, so $\log \nu \in \Hom (L,\R) \cong X^* (A_L) \otimes_\Z \R$.
We say that $\nu$ is strictly positive with respect to $P$ if 
$\langle \alpha^\vee, \log \nu \rangle > 0$ for all $\alpha \in \Phi (P,A_L) 
= \Phi (U_P, A_L)$. This condition is equivalent to:
\begin{equation}\label{eq:2.7}
\langle \alpha_S^\vee ,\log \nu \rangle > 0 \quad \forall \alpha_S \in 
\Phi (G,S) \text{ with } \alpha_S |_{A_L} \in \Phi (U_P,A_L) .
\end{equation}
Let $\delta \in \Irr (L)$ be essentially square-integrable and let 
$cc(\delta) : Z(L) \to \C^\times$ be its central character. We note that 
$|cc(\delta)|$ is determined
by its restriction to $A_L$, because $L_\der A_L$ is cocompact in~$L$.

\begin{defn}
We call $(P,L,\delta)$ an induction datum for $G$. We say that $(P,L,\delta)$
is positive if $\langle \alpha^\vee , \log |cc (\delta)| \rangle \geq 0$ for
all roots $\alpha$ of $(P,A_L)$. If $\tilde \delta \cong \delta$ then
$(P,L, \tilde \delta)$ is considered as equivalent to $(P,L,\delta)$.
\end{defn}

Recall the R-group $R_\delta$, the twisted group algebra $\C [R_\delta,\natural_\delta]$
and the decomposition of $I_P^G (\delta)$ from \eqref{eq:7.9}.

\begin{defn}\label{def:7.1}
Let $(P,L,\delta)$ be an induction datum  and let $\kappa \in \Irr \,
\C [R_\delta, \natural_\delta]$.
A $\C G$-module is called quasi-standard if it has the form
$I_P^G (\delta )_\kappa$ as in \eqref{eq:7.9}.
\end{defn}
This terminology is motivated by the following result.

\begin{thm}\label{thm:7.2}
\textup{\cite[\S 2.4]{Sol1} and \cite[\S 1]{ABPS}}\\
Let $(P,L,\delta)$ be an induction datum and let $\kappa \in \Irr \, \C [R_\delta,
\natural_\delta]$.
\enuma{
\item If $(P,L,\delta)$ is positive, then $\C [R_\delta, \natural_\delta] = 
\End_G (I_P^G (\delta))$ and $I_P^G (\delta)_\kappa$ is a
standard $\C G$-module.
\item Every standard $\C G$-module arises as in part (a), from $(P,L,\delta,
\kappa)$ which are unique up to $G$-conjugation.
\item Let $\pi \in \Irr (G)$. There exists a positive induction datum,
unique up to $G$-conjugation, such that $\pi$ is a quotient of $I_P^G (\delta)$.
}
\end{thm}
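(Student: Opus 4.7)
The plan is to reduce Theorem \ref{thm:7.2} to the classical Langlands classification (Theorem \ref{thm:7.1}) by splitting off the non-unitary part of the central character of $\delta$, passing through transitivity of parabolic induction, and matching the analytic $R$-group of $\delta$ in $G$ with one computed at an intermediate Levi subgroup.

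First I would write $\delta = \delta^u \otimes \nu$, where $\nu \in \Hom(L, \R_{>0})$ is the (unique) extension of $|cc(\delta)||_{A_L}$ and $\delta^u := \delta \otimes \nu^{-1}$ is irreducible and square-integrable modulo centre. The positivity hypothesis becomes $\langle \alpha^\vee, \log \nu \rangle \ge 0$ for every $\alpha \in \Phi(U_P, A_L)$. Let $L' \supseteq L$ be the Levi subgroup generated by $L$ together with the root subgroups $U_{\pm \alpha}$ for those $\alpha$ at which $\log \nu$ vanishes, and let $P' \supseteq P$ be the parabolic of $G$ with Levi factor $L'$ obtained from $P$. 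Then $\nu$ extends canonically to a character of $L'$ that is strictly positive with respect to $P'$, while $\tau := I_{P \cap L'}^{L'}(\delta^u)$ is a tempered representation of $L'$, since parabolic induction preserves temperedness. Transitivity of parabolic induction yields $I_P^G(\delta) \cong I_{P'}^G(\tau \otimes \nu)$.

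Next I would decompose $\tau$ via Harish-Chandra--Knapp--Stein theory inside $L'$: $\tau = \bigoplus_\kappa \kappa \otimes \pi_\kappa^{L'}$, with each $\pi_\kappa^{L'}$ irreducible tempered and $\End_{L'}(\tau) = \C[R_{\delta^u}^{L'}, \natural_{\delta^u}]$ from \eqref{eq:7.9} applied inside $L'$. Since $\nu$ is strictly positive for $P'$, each $I_{P'}^G(\pi_\kappa^{L'} \otimes \nu)$ is by definition a standard $\C G$-module, and applying $I_{P'}^G$ term by term gives the required decomposition of $I_P^G(\delta)$ into standard summands. The canonical identification $R_\delta^G \cong R_{\delta^u}^{L'}$ follows from the formula of Theorem \ref{thm:1.4}: for $\alpha \in \Phi(G, A_L) \setminus \Phi(L', A_L)$, strict positivity of $\nu$ in the direction $\alpha^\vee$ keeps every factor of the form $1 - q_\alpha^{\pm 1}(\chi_\pi \nu)(h_\alpha^\vee)$ away from zero, so $\Phi(G, A_L, \delta) \subseteq \Phi(L', A_L)$. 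The identification $\C[R_\delta, \natural_\delta] = \End_G(I_P^G(\delta))$ then follows: by Lemma \ref{lem:1.5} and Theorem \ref{thm:1.7}, all intertwining operators in directions outside $L'$ are genuine isomorphisms (not scalars times identity on an irreducible summand), so $\End_G(I_{P'}^G(\tau \otimes \nu)) = \End_{L'}(\tau)$, matching $\C[R_{\delta^u}^{L'}, \natural_{\delta^u}]$ which equals $\C[R_\delta, \natural_\delta]$ by the previous point.

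For part (b), given a standard module $I_Q^G(\sigma \otimes \mu)$ with $\sigma \in \Irr(M)$ tempered and $\mu$ strictly positive for $Q$, realize $\sigma$ as an irreducible direct summand of $I_{R \cap M}^M(\delta^u)$ for some square-integrable-modulo-centre $\delta^u$ on a Levi $L \subseteq M$ (Harish-Chandra's classification of tempered irreducibles), set $\delta := \delta^u \otimes (\mu|_L)$, and take $P \subseteq Q$ to be the parabolic of $G$ with Levi $L$ and $P \cap M = R \cap M$. Then $(P, L, \delta)$ is positive, and the construction of part (a) with $L' = M$ returns $I_Q^G(\sigma \otimes \mu)$ as one of the summands $I_P^G(\delta)_\kappa$. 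Uniqueness of $(P, L, \delta, \kappa)$ up to $G$-conjugacy combines the uniqueness clause of Theorem \ref{thm:7.1}(c) with the uniqueness in Harish-Chandra's parameterization of tempered representations of $M$. Part (c) is then immediate: by Theorem \ref{thm:7.1}(b) every $\pi \in \Irr(G)$ is a quotient of a standard module, and (b) converts that standard module into the required positive induction datum.

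The hard part will be the identification $R_\delta^G \cong R_{\delta^u}^{L'}$ together with the matching of the $2$-cocycles $\natural_\delta$ and $\natural_{\delta^u}$, and the resulting term-by-term agreement of the two decompositions \eqref{eq:7.9}. This is not a purely formal consequence of transitivity; it requires tracking how the normalized intertwining operators of Theorem \ref{thm:1.7}, which realize the projective $R$-group representation, behave under the transitivity isomorphism $I_P^G(\delta) \cong I_{P'}^G(\tau \otimes \nu)$ and verifying that those intertwining operators generate precisely $\End_G(I_P^G(\delta))$ under the strict-positivity assumption on $\nu$ with respect to $P'$.
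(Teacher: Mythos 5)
First, a remark on context: the paper does not prove Theorem \ref{thm:7.2} at all; it is quoted from \cite[\S 2.4]{Sol1} and \cite[\S 1]{ABPS}, and your reduction --- writing $\delta = \delta^u \otimes \nu$, passing to the intermediate Levi $L' = L_\delta$, inducing the tempered representation $\tau = I_{P \cap L'}^{L'}(\delta^u)$, using transitivity and the Langlands classification --- is exactly the strategy of those references and of the paper's own surrounding discussion \eqref{eq:7.2}--\eqref{eq:7.11}. So the route is the right one; the problem is that your write-up is incomplete, and incorrect, precisely at the points where the theorem is nontrivial. The one step you argue in detail, namely $\Phi(G,A_L,\delta) \subseteq \Phi(L',A_L)$ via Theorem \ref{thm:1.4}, is wrong as stated: strict positivity of $\nu$ in the direction of $\alpha$ does \emph{not} keep every numerator factor $1 \mp (\chi_\pi \nu)(h_\alpha^\vee)^{\pm 1}$ away from zero, because $\chi_\pi$ also carries the non-unitary part of the cuspidal support of $\delta^u$ in the directions inside $L$. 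For instance, take $G = GL_3$, $L = GL_2 \times GL_1$, $\delta = \mathrm{St}_2 |\det|^{1/2} \boxtimes 1$: this is a positive induction datum, the cuspidal support is $(|\cdot|, 1, 1)$, and the factor attached to $e_2 - e_3$ vanishes at $\chi = 1$ even though $\langle \beta^\vee, \log \nu \rangle > 0$; the corank-one $\mu$-function is nonetheless nonzero only because of cancellation against the pole of the $e_1 - e_3$ factor. The clean argument for the inclusion is different: a zero of $\mu_{L_\beta,L}$ at $\delta$ forces $s_\beta \cdot \delta \cong \delta$ (as in the use of \cite[\S 5.4.2]{Sil0} in Lemma \ref{lem:1.3}), hence $s_\beta$ fixes $\log |cc(\delta)|$, i.e. $\langle \beta^\vee, \log \nu \rangle = 0$.

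The more serious gap is the one you yourself flag in the last paragraph and then do not fill: the identification of $R_\delta$ (defined in \S\ref{par:1.5} from $W_{L,\delta}$ and the zeros of the $\mu$-functions at the non-unitary $\delta$, with its $2$-cocycle $\natural_\delta$ realized by the operators $J_\delta(r)$ on $I_P^G(\delta)$) with the tempered Knapp--Stein data of $\delta^u$ inside $L'$, compatibly with the transitivity isomorphism $I_P^G(\delta) \cong I_{P'}^G(\tau \otimes \nu)$, together with the completeness statement $\C[R_\delta,\natural_\delta] = \End_G (I_P^G(\delta))$. Your one-line justification --- that intertwining operators in directions outside $L'$ are isomorphisms, hence $\End_G (I_{P'}^G(\tau \otimes \nu)) = \End_{L'}(\tau)$ --- is a non sequitur: invertibility of some $J$-operators gives no upper bound on the endomorphism algebra. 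What is actually needed is, e.g., that a standard module has endomorphism ring $\C$, that $\Hom_G$ between standard modules with non-isomorphic Langlands quotients vanishes (provable with Lemma \ref{lem:7.3}-type $\mc N$-arguments or as in \cite{ABPS}), and that every $w \in W_{L,\delta}$ is realized in $N_{L'}(L)/L$ with matching cocycle --- which is the actual content of \cite[\S 1]{ABPS}. Similarly, in (b) the passage from conjugacy of the Langlands data $(Q, \tau \otimes \nu)$ in Theorem \ref{thm:7.1}.c to conjugacy of the quadruple $(P,L,\delta,\kappa)$ requires the uniqueness in Harish-Chandra's classification of tempered representations and the injectivity of $\kappa \mapsto I^{L'}(\delta^u)_\kappa$, which you assert in one sentence. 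As it stands, the proposal reproduces the architecture of the cited proof but leaves its decisive steps as assertions.
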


Let $L_\delta \supset L$ be the largest Levi subgroup of $G$ such that
$|cc(\delta)| = 1$ on $Z(L) \cap L_{\delta,\der}$. By \cite[Theorem 1.6]{ABPS},
$I_{L_\delta \cap P}^{L_\delta} (\delta)$ is completely reducible and
decomposes as
\begin{equation}\label{eq:7.2}
I_{L_\delta \cap P}^{L_\delta} (\delta) =
\bigoplus\nolimits_{\kappa \in \Irr \, \C [R_\delta, \natural_\delta]}
\kappa \otimes I_{L_\delta \cap P}^{L_\delta} (\delta )_\kappa .
\end{equation}
Moreover, each $I_{L_\delta \cap P}^{L_\delta} (\delta )_\kappa$ can be
written as $\tau \otimes \nu$ where $\tau$ is an irreducible tempered
$L_\delta$-representation and $\nu \in \Hom (L_\delta, \R_{>0})$. By
the definition of $L_\delta$, $\nu$ does not extend to a character of
any Levi subgroup of $G$ strictly containing $L_\delta$. We note that,
by the transitivity of parabolic induction
\begin{equation}\label{eq:7.11}
I_P^G  (\delta )_\kappa \cong I_{L_\delta P}^G \big( I_{L_\delta \cap P}^{
L_\delta} (\delta )_\kappa \big) \cong I_{L_\delta P}^G (\tau \otimes \nu) .
\end{equation}
Therefore one can characterize quasi-standard $\C G$-modules as 
representations of the form $I_Q^G (\tau \otimes \nu)$, where $Q = M U_Q$ is
a parabolic subgroup of $G$, $\tau \in \Irr (M)$ is tempered and 
$\nu \in \Hom (M, \R_{>0})$ does not extend to a character of any strictly
larger Levi subgroup of $G$. The difference with standard $\C G$-modules
is that $\nu$ need not be positive with respect to $Q$. 

Let $Q_\nu$ be the parabolic subgroup of $G$ with Levi factor $M$ and
unipotent radical generated by root subgroups $\alpha$ with 
$\langle \alpha^\vee, \log \nu \rangle > 0$. Then $I_{Q_\nu}^G (\tau \otimes \nu)$ 
is standard. In the same way every induction datum $(P,L,\delta)$ 
can be made positive by changing only $P$.

We say that two induction data $(P,L,\delta)$ and $(P',L',\omega)$ are
$G$-associate if there exists a $g \in G$ such that $g L g^{-1} = L'$
and $g \cdot \delta \cong \omega$. It is known from \cite[Lemma 2.13]{Sol1}
that every induction datum is $G$-associate to a positive induction datum,
unique up to equivalence.

For two associate induction data as above we have
\begin{equation}\label{eq:2.6}
I_P^G (\delta) \cong I_{g P g^{-1}}^G (g \cdot \delta) \cong
I_{g P g^{-1}}^G (\omega) .
\end{equation}
By \eqref{eq:2.6} and \cite[Lemma 1.1]{ABPS}
\begin{equation}\label{eq:7.1}
I_P^G (\delta) \quad \text{and} \quad I_{P'}^G (\omega) \quad
\text{have the same Jordan--H\"older content.}
\end{equation}
We proceed to make this statement more precise. The group $g L_\delta g^{-1}
= L'_\omega$ has the same properties as $L_\delta$, only for
$(P',L',\omega)$. By \cite[Lemma 1.1]{ABPS} the $L'_\omega$-representations
$g \cdot I_{L_\delta \cap P}^{L_\delta} (\delta) \cong I_{L'_\omega \cap
g P g^{-1}}^{L'_\omega} (\omega)$ and $I_{L'_\omega \cap P'}^{L'_\omega}
(\omega)$ have the same Jordan--H\"older content. Since they are both
completely reducible, we conclude that
\begin{equation}\label{eq:7.3}
g \cdot I_{L_\delta \cap P}^{L_\delta} (\delta) \cong
I_{L'_\omega \cap P'}^{L'_\omega} (\omega) .
\end{equation}
Conjugation by $g$ induces a group isomorphism $R_\delta \cong R_\omega$
and a bijection
\[
\Irr \, \C [R_\delta,\natural_\delta] \to \Irr \, \C [R_\omega,
\natural_\omega] : \kappa \mapsto \kappa' .
\]
Together with \eqref{eq:7.2} and \eqref{eq:7.3} that implies
\begin{equation}\label{eq:7.4}
g (\kappa \otimes I_{L_\delta \cap P}^{L_\delta} (\delta)_\kappa) \cong
\kappa' \otimes I_{L'_\omega \cap P'}^{L'_\omega} (\omega )_{\kappa'} .
\end{equation}

\begin{lem}\label{lem:7.7}
In the setting of \eqref{eq:7.4}, the representations $I_P^G (\delta)_\kappa$
and $I_{P'}^G (\omega)_{\kappa'}$
have the same Jordan--H\"older content. Moreover, there exists a nonzero
$G$-intertwining operator $I_P^G (\delta)_\kappa \to I_{P'}^G (\omega)_{\kappa'}$.
\end{lem}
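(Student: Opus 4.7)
The plan is to reduce both representations to parabolic inductions from parabolics sharing a common Levi factor, and then invoke standard facts about such inductions.

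First, by the transitivity of parabolic induction displayed in \eqref{eq:7.11}, we have
\[
I_P^G(\delta)_\kappa \cong I_{L_\delta P}^G \big( I_{L_\delta \cap P}^{L_\delta}(\delta)_\kappa \big)
\quad \text{and} \quad
I_{P'}^G(\omega)_{\kappa'} \cong I_{L'_\omega P'}^G \big( I_{L'_\omega \cap P'}^{L'_\omega}(\omega)_{\kappa'} \big).
\]
Next I apply the conjugation isomorphism \eqref{eq:2.6} to the first expression and combine it with \eqref{eq:7.4} to identify
\[
I_P^G(\delta)_\kappa \cong I_{g L_\delta P g^{-1}}^G \big( g \cdot I_{L_\delta \cap P}^{L_\delta}(\delta)_\kappa \big)
\cong I_{g L_\delta P g^{-1}}^G \big( I_{L'_\omega \cap P'}^{L'_\omega}(\omega)_{\kappa'} \big) .
\]
Writing $I_{L'_\omega \cap P'}^{L'_\omega}(\omega)_{\kappa'} = \tau' \otimes \nu'$ and setting $P_1 = g L_\delta P g^{-1}$, $P_2 = L'_\omega P'$, both $I_P^G(\delta)_\kappa$ and $I_{P'}^G(\omega)_{\kappa'}$ become parabolic inductions of the single irreducible $L'_\omega$-representation $\tau' \otimes \nu'$ from two parabolic subgroups $P_1, P_2 \subset G$ with the common Levi factor $L'_\omega$.

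This reduces the lemma to the standard claim that $I_{P_1}^G(\tau' \otimes \nu')$ and $I_{P_2}^G(\tau'\otimes \nu')$ have the same Jordan--H\"older content and admit a nonzero $G$-intertwining operator between them. The equality of Jordan--H\"older content is classical: it follows from the Geometric Lemma of Bernstein--Zelevinsky applied to the Jacquet modules, or equivalently from the Harish-Chandra character identity showing that $\mr{tr}\, I_{P_1}^G(\tau'\otimes\nu') = \mr{tr}\, I_{P_2}^G(\tau'\otimes\nu')$, and it is exactly the content of \cite[Lemma 1.1]{ABPS} which is already cited in \eqref{eq:7.1}. For the existence of the nonzero intertwiner I intend to invoke Harish-Chandra's operator $J_{P_2|P_1}(\tau'\otimes\nu')$, constructed in \eqref{eq:6.1} as a rational family in the unramified character parameter $\chi \in X_\nr(L'_\omega)$. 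After multiplying by the normalization factors introduced in \eqref{eq:1.12}, this yields a family $J'_{P_2|P_1}(\tau'\otimes\nu'\otimes\chi)$ which is holomorphic and generically an isomorphism, and hence at worst specializes at $\chi = 1$ to a nonzero $G$-homomorphism $I_{P_1}^G(\tau'\otimes\nu') \to I_{P_2}^G(\tau'\otimes\nu')$.

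The main obstacle is the second half: guaranteeing that the (normalized) intertwiner does not vanish at the specific parameter $\nu'$, which is in general not close to the identity of $X_\nr(L'_\omega)$. The clean route is to verify, using Theorem \ref{thm:1.7} applied to $\pi = \tau'\otimes \nu'$, that $J'_{P_2|P_1}(\tau'\otimes\nu'\otimes\chi)$ is an isomorphism for $\chi$ in a neighborhood of $1$; this requires checking that $\mu_{M_\alpha,M}(\sigma)$ is finite for $\sigma$ representing the cuspidal support of $\tau'\otimes\nu'$ and all reduced roots $\alpha \in \Phi(U_{P_1},A_M) \cap \Phi(U_{\overline{P_2}},A_M)$. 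Since $\tau'$ is tempered its underlying cuspidal data is unitary, and the positive character $\nu'$ does not introduce new poles of $\mu$ in the relevant direction because the roots in question are exactly those separating the two parabolics with common Levi $L'_\omega$, on which $\nu'$ is unambiguously defined; this boils down to the explicit formula of Theorem \ref{thm:6.2}. With that verification in hand, the normalized operator is even an isomorphism, providing the desired nonzero intertwiner and completing the proof.
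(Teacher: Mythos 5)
The first half of your argument — reducing both modules to parabolic inductions of the single irreducible $L'_\omega$-representation $\tau' \otimes \nu'$ from two parabolics $P_1, P_2$ with common Levi $L'_\omega$, and deducing the Jordan--H\"older statement from \cite[Lemma 1.1]{ABPS} — matches the paper exactly and is fine.

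The gap is in the justification that Harish-Chandra's operator specializes to a nonzero (or invertible) map at $\chi = 1$. You propose to invoke Theorem \ref{thm:1.7}, which requires $\mu_{M_\alpha,M}(\sigma) \neq \infty$ for every separating root $\alpha$, but your verification of that hypothesis (``temperedness of $\tau'$, positivity of $\nu'$, and Theorem \ref{thm:6.2}'') is not an argument. Temperedness of $\tau'$ and positivity of $\nu'$ do not by themselves control the value $\nu_\sigma(h_\alpha^\vee)$ for roots $\alpha$ of $(G,A_M)$ outside $\Phi(L'_\omega,A_M)$, and $\sigma$ is already a residual point for $\mu_{L,M}$ (being the cuspidal support of the essentially square-integrable $\delta$), so it is not a priori clear that $\mu_{M_\alpha,M}(\sigma)\neq\infty$ for the roots you need. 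The mechanism that actually makes this work, and which your proposal does not use, is the maximality of $L_\delta$: the paper factors the intertwiner into corank-one operators $J_{P_2|P_1}(\tau'\otimes\chi)$ for $L_\delta$-representations and observes that a pole at $\chi=1$ would force $s_\alpha$ to stabilize $\tau'$, hence force $|cc(\tau')| = |cc(\delta)|_{Z(L_\delta)}$ to be trivial on $Z(L_{12})$ for a Levi $L_{12}$ strictly containing $L_\delta$ --- contradicting the defining property of $L_\delta$. With regularity at $\chi=1$ established this way, the nonvanishing comes from \cite[p.\ 283]{Wal}. Your sketch gestures at this (``$\nu'$ is unambiguously defined'' on $L'_\omega$) but never turns it into the needed inequality, so as written the key step is missing.
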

\begin{proof}
We abbreviate $\tau' = I_{L_\delta \cap P}^{L_\delta} (\delta)_\kappa$, so
that $I_P^G (\delta)_\kappa \cong I_{L_\delta P}^G (\tau')$.
By \eqref{eq:7.4} there are isomorphisms
\begin{equation}\label{eq:7.8}
I_{P'}^G (\omega)_{\kappa'} \cong I_{L'_\omega P}^G \big( I_{L'_\omega \cap P'
}^{L'_\omega} (\omega )_{\kappa'} \big) \cong I_{L'_\omega P'}^G (g \cdot
\tau') \cong I_{L_\delta g^{-1} P' g}^G (\tau') .
\end{equation}
By \cite[Lemma 1.1]{ABPS} $I_{L_\delta P}^G (\tau')$ and $I_{L_\delta g^{-1}
P' g}^G (\tau')$ have the same Jordan--H\"older content.
We recall Harish-Chandra's intertwining operators
\begin{equation}\label{eq:7.7}
J_{L_\delta g^{-1} P' g | L_\delta P}(\tau' \otimes \chi) : 
I_{L_\delta P}^G (\tau' \otimes \chi) \to I_{L_\delta g^{-1} P g}^G 
(\tau' \otimes \chi) \quad \chi \in X_\nr (L_\delta) .
\end{equation}
from \eqref{eq:6.1}. As we saw in \eqref{eq:1.9}--\eqref{eq:1.10},
$J_{L_\delta g^{-1} P' g | L_\delta P}(\tau' \otimes \chi)$ is a composition of 
intertwining operators from a corank one setting. Let 
$J_{P_2 | P_1}(\tau' \otimes \chi)$ be a such a simpler intertwining operator 
and let $L_{12}$ be the derived group of the group generated by $P_1 \cup P_2$.
By Theorem \ref{thm:6.2} or \cite[p. 283]{Wal}, $J_{P_2 | P_1}(\tau' \otimes \chi)$ 
can only have a pole at $\chi = 1$ if the nontrivial element $s_\alpha$ of the
associated Weyl group (a subgroup of $N_G (L_\delta) / L_\delta$ of order
at most two) stabilizes $\tau'$. That would imply that $|cc(\tau')| =
|cc(\delta)|_{Z(L_\delta)}$ is trivial on $Z(L_{12}) \supsetneq
Z(L_{\delta,\der})$. But that would contradict the construction of
$L_\delta$, so $J_{P_2 | P_1}(\tau' \otimes \chi)$ is regular at $\chi = 1$.

Hence $J_{L_\delta g^{-1} P' g | L_\delta P}(\tau' \otimes \chi)$ is regular
at $\chi = 1$ and \eqref{eq:7.7} is well-defined. Then \cite[p. 283]{Wal}
shows that \eqref{eq:7.7} is nonzero. Finally, we compose \eqref{eq:7.7}
with the isomorphism \eqref{eq:7.8} (from right to left).
\end{proof}

\subsection{A rank one case} \

We work out quasi-standard modules in a relevant simple case, which is known but for 
which we could not find a good reference.

\begin{prop}\label{prop:2.1}
Let $\sigma \in \Irr (M)$ be unitary supercuspidal. Let 
$\nu \in \Hom (M, \R_{>0})$ and $\alpha \in \Phi (G,A_M)^+$.
\enuma{
\item If $\mu_{M_\alpha,M}(\sigma \otimes \nu) \neq \infty$, then 
$I_{M U_\alpha}^{M_\alpha} (\sigma \otimes \nu)$ is completely reducible
and has no essentially square-integrable subquotients. It has length
two if and only if $s_\alpha (\sigma \otimes \nu) \cong \sigma \otimes \nu$ and 
$\mu_{M_\alpha,M}(\sigma \otimes \nu) \neq 0$. Otherwise 
$I_{M U_\alpha}^{M_\alpha} (\sigma \otimes \nu)$ is irreducible.
\item If $\mu_{M_\alpha,M}(\sigma \otimes \nu) = \infty$, then 
$I_{M U_\alpha}^{M_\alpha} (\sigma \otimes \nu)$ has length two and is
indecomposable. If $\langle \alpha^\vee, \log \nu \rangle > 0$, then
the irreducible quotient of $I_{M U_\alpha}^{M_\alpha} (\sigma \otimes \nu)$ 
is not tempered and the irreducible subrepresentation of 
$I_{M U_\alpha}^{M_\alpha} (\sigma \otimes \nu)$ is essentially 
square-integrable. If $\langle \alpha^\vee, \log \nu \rangle < 0$, then
these properties of the quotient and the subrepresentation are exchanged.
}
\end{prop}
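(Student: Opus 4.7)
The plan is to analyse the two cases separately, using the machinery of Section \ref{sec:1}.

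For part (a), the assumption $\mu_{M_\alpha, M}(\sigma \otimes \nu) \neq \infty$ combined with Theorem \ref{thm:6.7} (recall $r_M = 1$) rules out any essentially square-integrable subquotient of $I_{MU_\alpha}^{M_\alpha}(\sigma \otimes \nu)$, and Proposition \ref{prop:1.6} provides an isomorphism $J'_{MU_{-\alpha} | MU_\alpha}(\sigma \otimes \nu) : I_{MU_\alpha}^{M_\alpha}(\sigma \otimes \nu) \isom I_{MU_{-\alpha}}^{M_\alpha}(\sigma \otimes \nu)$. The length analysis splits according to whether $s_\alpha (\sigma \otimes \nu) \cong \sigma \otimes \nu$. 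Because $\sigma$ is unitary while $\nu$ takes values in $\R_{>0}$, this isomorphism forces both $s_\alpha \sigma \cong \sigma$ and $\nu(h_\alpha^\vee) = 1$, so $\nu$ extends to $\Hom(M_\alpha, \R_{>0})$ and the projection formula yields $I_{MU_\alpha}^{M_\alpha}(\sigma \otimes \nu) \cong I_{MU_\alpha}^{M_\alpha}(\sigma) \otimes \nu$. Since $\sigma$ is tempered, Knapp--Stein theory (the decomposition \eqref{eq:7.9} with $\delta = \sigma$) splits $I_{MU_\alpha}^{M_\alpha}(\sigma)$ into $|R_\sigma|$ irreducible tempered summands, with $R_\sigma = \langle s_\alpha \rangle$ precisely when $\mu_{M_\alpha, M}(\sigma) \neq 0$. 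This yields length two with complete reducibility when $\mu_{M_\alpha, M}(\sigma \otimes \nu) \neq 0$ and irreducibility when it vanishes.

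In the complementary sub-case $s_\alpha (\sigma \otimes \nu) \not\cong \sigma \otimes \nu$, one of $MU_{\pm \alpha}$ makes $(M, \sigma \otimes \nu)$ a positive induction datum, so either $I_{MU_\alpha}^{M_\alpha}(\sigma \otimes \nu)$ or (via Proposition \ref{prop:1.6}) its isomorphic copy $I_{MU_{-\alpha}}^{M_\alpha}(\sigma \otimes \nu)$ is a standard $\C M_\alpha$-module. By Theorem \ref{thm:7.2}(a), its endomorphism algebra is $\C[R_{\sigma \otimes \nu}, \natural_{\sigma \otimes \nu}] = \C$. Since parabolic induction from a cuspidal representation is multiplicity-free, and since the rank-one Bernstein component at $(M, \sigma \otimes \nu)$ with $\mu_{M_\alpha, M}(\sigma \otimes \nu) \neq \infty$ contains only one irreducible up to isomorphism (the Langlands quotient of the standard module), the standard module must itself be irreducible, forcing $I_{MU_\alpha}^{M_\alpha}(\sigma \otimes \nu)$ to be irreducible.

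For part (b), Theorem \ref{thm:6.7} supplies an essentially square-integrable subquotient $D$, and Theorem \ref{thm:6.2}(b) combined with $\mu_{M_\alpha, M}(\sigma \otimes \nu) = \infty$ forces $\nu(h_\alpha^\vee) \in \{\pm q_\alpha^{\pm 1}, \pm q_{\alpha*}^{\pm 1}\}$, so $\langle \alpha^\vee, \log \nu \rangle \neq 0$. The geometric lemma caps the length of $I_{MU_\alpha}^{M_\alpha}(\sigma \otimes \nu)$ at $2$, since the Jacquet module along $MU_\alpha$ has just two cuspidal composition factors $\sigma \otimes \nu$ and $s_\alpha(\sigma \otimes \nu)$. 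When $\langle \alpha^\vee, \log \nu \rangle > 0$, the triple $(MU_\alpha, M, \sigma \otimes \nu)$ is a positive induction datum, hence $I_{MU_\alpha}^{M_\alpha}(\sigma \otimes \nu)$ is standard; Theorem \ref{thm:7.1}(a) supplies the unique irreducible quotient $\mc L(MU_\alpha, \sigma \otimes \nu)$, which is non-tempered because $\nu$ is non-trivial, and which therefore differs from $D$. Consequently $D$ must sit as the unique irreducible subrepresentation, yielding length two; indecomposability follows from uniqueness of the quotient. The case $\langle \alpha^\vee, \log \nu \rangle < 0$ is handled by swapping $\pm \alpha$: $I_{MU_{-\alpha}}^{M_\alpha}(\sigma \otimes \nu)$ is now the standard module with non-tempered Langlands quotient $L$ and essentially square-integrable subrepresentation, and the standard duality between parabolic and opposite-parabolic induction in the Langlands classification exhibits $L$ as the unique irreducible subrepresentation of $I_{MU_\alpha}^{M_\alpha}(\sigma \otimes \nu)$, with the essentially square-integrable piece as the quotient. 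The main obstacle I anticipate lies in part (a), where the R-group decomposition \eqref{eq:7.9} applies cleanly only when $\delta$ is square-integrable modulo the centre, so the sub-cases $\nu(h_\alpha^\vee) = 1$ and $\nu(h_\alpha^\vee) \neq 1$ must be handled separately --- the former by projection-formula reduction to the tempered case, the latter by Theorem \ref{thm:7.2}(a) combined with multiplicity-freeness of cuspidal induction.
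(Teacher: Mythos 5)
Your overall architecture (Theorem \ref{thm:6.7} to detect square-integrable subquotients, length $\leq 2$, positivity and the Langlands classification in part (b)) is close to the paper's, but part (a) has a genuine gap in the sub-case $s_\alpha (\sigma \otimes \nu) \not\cong \sigma \otimes \nu$. There you only establish $\End_{M_\alpha}\big(I_{M U_\alpha}^{M_\alpha}(\sigma \otimes \nu)\big) = \C$, which gives indecomposability, not irreducibility, and then you assert that the component at $(M,\sigma\otimes\nu)$ with $\mu_{M_\alpha,M}(\sigma\otimes\nu)\neq\infty$ ``contains only one irreducible up to isomorphism''. Since the irreducible subquotients of $I_{M U_\alpha}^{M_\alpha}(\sigma\otimes\nu)$ are exactly the irreducibles with this cuspidal support, that assertion \emph{is} the irreducibility you are trying to prove, and no argument is offered; ``multiplicity-freeness of cuspidal induction'' neither helps nor is it justified. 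Two ways to close the gap: (i) the paper's route: when $j_{M_\alpha,M}(\sigma\otimes\nu)\notin\{0,\infty\}$ both $J_{M U_{\pm\alpha}|M U_{\mp\alpha}}(\sigma\otimes\nu)$ are invertible, and by \cite[Lemme VII.4.1]{Ren} the Langlands quotient is the image of $J_{M U_{-\alpha}|M U_\alpha}(\sigma\otimes\nu)$, hence is all of $I_{M U_{-\alpha}}^{M_\alpha}(\sigma\otimes\nu)\cong I_{M U_\alpha}^{M_\alpha}(\sigma\otimes\nu)$; or (ii) note that a hypothetical second constituent would have Jacquet module $s_\alpha(\sigma\otimes\nu)$ only, hence be essentially square-integrable by Casselman's criterion, contradicting Theorem \ref{thm:6.7} and your own first step. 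You make neither argument. (Minor: in the $s_\alpha$-fixed sub-case you replace $\mu_{M_\alpha,M}(\sigma\otimes\nu)\neq 0$ by $\mu_{M_\alpha,M}(\sigma)\neq 0$; this is fine because $\nu(h_\alpha^\vee)=1$ and Theorem \ref{thm:6.2} makes $\mu$ depend only on $\chi(h_\alpha^\vee)$, but that needs saying.)

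In part (b) there is a smaller but real gap: you claim the Langlands quotient ``is non-tempered because $\nu$ is non-trivial, and therefore differs from $D$''. But $D$ is only \emph{essentially} square-integrable, so $D$ is in general also non-tempered; non-temperedness does not separate $\mc L$ from $D$. The correct argument, which is what the paper does, is uniqueness in the Langlands classification: if $\mc L(M U_\alpha,\sigma\otimes\nu)$ were essentially square-integrable (more generally, a character twist of a tempered representation), its standard module would be $\mc L$ itself, contradicting Theorem \ref{thm:7.1}(c) since it is also realized from the proper parabolic $M U_\alpha$. Once $\mc L \not\cong D$ is secured, your placement of $D$ as the unique subrepresentation and your treatment of $\langle \alpha^\vee,\log\nu\rangle<0$ via the dual form of the Langlands classification (unique irreducible subrepresentation of induction from the opposite parabolic) is a legitimate variant; the paper instead passes to contragredients with $\mu_{M_\alpha,M}(\sigma^\vee\otimes\nu^{-1})=\mu_{M_\alpha,M}(\sigma\otimes\nu)$, which stays within the quoted references, so if you keep your route you should cite a precise source for that dual statement.
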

\begin{proof}
It is known from \cite[Th\'eor\`eme VI.5.4]{Ren} that $I_{M U_\alpha}^{M_\alpha} 
(\sigma \otimes \nu)$ has length at most two. We recall from Theorem \ref{thm:6.7}
that $I_{M U_\alpha}^{M_\alpha} (\sigma \otimes \nu)$ has an essentially
square-integrable subquotient if and only if $\mu_{M_\alpha,M}(\sigma \otimes
\nu) = \infty$.

\textbf{Case I: $\langle \alpha^\vee, \log \nu \rangle = 0$.} 
Then $M_{\sigma \otimes \nu} = M_\alpha$ and, as we saw in \eqref{eq:7.2},
$I_{M U_\alpha}^{M_\alpha} (\sigma \otimes \nu)$ is completely reducible. 
Theorem \ref{thm:6.2} shows that $\mu_{M_\alpha,M}(\sigma \otimes \nu) \neq \infty$.
Moreover \eqref{eq:7.2} shows that $I_{M U_\alpha}^{M_\alpha} (\sigma \otimes \nu)$
is irreducible whenever $R_{\sigma \otimes \nu}$ is trivial. If $R_{\sigma 
\otimes \nu}$ is nontrivial, then its only nontrivial element is $s_\alpha$,
and $\mu_{M_\alpha,M}(\sigma \otimes \nu) \neq 0$ by the definition of  
$R_{\sigma \otimes \nu}$.

\textbf{Case II: $\langle \alpha^\vee, \log \nu \rangle > 0$.}
If $N_{M_\alpha}(M) / M$ has a nontrivial element, then that does not fix
$\nu$, so in any case $W_{M_\alpha, \sigma \otimes \nu} = \{e\}$. By 
Theorem \ref{thm:7.2}.a, $I_{M U_\alpha}^{M_\alpha} (\sigma \otimes \nu)$
is a standard module, so by the Langlands classification it has a unique
irreducible quotient. As it has length at most two, it also has a unique
irreducible subrepresentation.

Suppose that $\mu_{M_\alpha,M}(\sigma \otimes \nu) = \infty$. Then
\begin{equation}\label{eq:2.1}
J_{M U_\alpha | M U_{-\alpha}}(\sigma \otimes \nu) \circ 
J_{M U_{-\alpha} | M U_\alpha}(\sigma \otimes \nu) = 
j_{M_\alpha,M}(\sigma \otimes \nu) \, \mr{id} = 0. 
\end{equation}
Both $J$-operators in \eqref{eq:2.1} are nonzero, so both are not injective.
It follows that $I_{M U_\alpha}^{M_\alpha} (\sigma \otimes \nu)$ is reducible.
By the uniqueness in the Langlands classification, its irreducible quotient
$\mc L (M U_\alpha, \sigma \otimes \nu)$ is not tempered. More precisely,
$\mc L (M U_\alpha, \sigma \otimes \nu)$ is not a tensor product of a tempered
representation and a character of $M_\alpha$, because in  that case its standard 
module would be $\mc L (M U_\alpha, \sigma \otimes \nu)$ itself. 
This also entails that the essentially square-integrable subquotient of
$I_{M U_\alpha}^{M_\alpha} (\sigma \otimes \nu)$ must be its irreducible
subrepresentation.

Suppose next that $\mu_{M_\alpha,M}(\sigma \otimes \nu) \neq \infty$,
or equivalently $j_{M_\alpha,M}(\sigma \otimes \nu) \neq 0$. From
Theorem \ref{thm:6.2} we see that $j_{M_\alpha,M}(\sigma \otimes \nu) \neq \infty$, 
so both $J_{M U_\alpha | M U_{-\alpha}}(\sigma \otimes \nu)$ and 
$J_{M U_{-\alpha} | M U_\alpha}(\sigma \otimes \nu)$ are invertible. 
By construction $\mc L (M U_\alpha, \sigma \otimes \nu)$ is the image
of $J_{M U_{-\alpha} | M U_\alpha}(\sigma \otimes \nu)$, see 
\cite[Lemme VII.4.1]{Ren}. Hence
\[
\mc L (M U_\alpha, \sigma \otimes \nu) = I_{M U_{-\alpha}}(\sigma \otimes \nu)
\cong  I_{M U_\alpha}(\sigma \otimes \nu ) .
\]
\textbf{Case III: $\langle \alpha^\vee, \log \nu \rangle < 0$.}
By \cite[(IV.2.1.2)]{Ren}, the smooth contragredient representation 
$I_{M U_\alpha}^{M_\alpha} (\sigma \otimes \nu)^\vee$ is isomorphic to
\[
I_{M U_\alpha}^{M_\alpha} ((\sigma \otimes \nu)^\vee) \cong
I_{M U_\alpha}^{M_\alpha} (\sigma^\vee \otimes \nu^{-1}) .  
\]
The representation $\sigma^\vee$ is again unitary supercuspidal, so this
brings us back to case II. According to \cite[Lemme V.2.1]{Wal}, which
is proven in \cite[Theorem 3.5]{FlSo}, 
\[
\mu_{M_\alpha,M}(\sigma^\vee \otimes \nu^{-1}) = 
\mu_{M_\alpha,M}(\sigma \otimes \nu) .
\]
When $\mu_{M_\alpha,M}(\sigma \otimes \nu) \neq \infty$, we know from case II
that $I_{M U_\alpha}^{M_\alpha} (\sigma^\vee \otimes \nu^{-1})$ is irreducible
but not essentially square-integrable. Then its contragredient 
$I_{M U_\alpha}^{M_\alpha} (\sigma \otimes \nu)^\vee$ has the same two
properties.

When $\mu_{M_\alpha,M}(\sigma \otimes \nu) = \infty$, we know from case II 
that $I_{M U_\alpha}^{M_\alpha} (\sigma^\vee \otimes \nu^{-1})$ has an
essentially square-integrable subrepresentation (say $\delta$) and a non-tempered 
irreducible quotient $\mc L (M U_\alpha , \sigma^\vee \otimes \nu^{-1})$. Then 
its contragredient $I_{M U_\alpha}^{M_\alpha} (\sigma \otimes \nu)^\vee$ has
the essentially square-integrable representation $\delta^\vee$ as quotient and
the non-tempered representation $\mc L (M U_\alpha, \sigma^\vee \otimes 
\nu^{-1})^\vee$ as subrepresentation.
\end{proof}

\subsection{An alternative characterization of standard modules} \
\label{par:2.2}

We characterize standard modules as quasi-standard modules with some 
extra properties. In this way one can avoid the use of temperedness or
positivity of characters in the description of standard modules.

We need some information about the irreducible constituents of a standard
module which are not quotients. All these are larger than the irreducible
quotient, a claim that we will quantify with an invariant from \cite{Sol1}.

We fix a maximal $F$-split torus $S$ in $G$ and a $W(G,S)$-invariant
inner product on $X^* (S) \otimes_\Z \R$. We may assume that all Levi
subgroups in our constructions are standard, in the sense that
they contain $Z_G (S)$. Alternatively we can pass to another maximal
split torus $S'$, and then the inner product transfers canonically
to $X^* (S') \otimes_\Z \R$ by its $W(G,S)$-invariance.

As before, let $\delta \in \Irr (L)$ be essentially square-integrable.
Let $(\tilde L,\sigma)$ be a representative of the cuspidal support Sc$(\delta)$
and consider cc$(\sigma) : Z(\tilde L) \to \C^\times$. As $Z(\tilde L) \tilde L^1$ 
is cocompact in $\tilde L$,
\[
\log | cc(\sigma) | : Z(\tilde L) \to \R
\]
extends uniquely to a group homomorphism from $\tilde L$ to $\R$. Then
$\log |cc (\sigma)| : \tilde L \to \R$ determines an element of
\[
\Hom (S, \R) \cong X^* (S) \otimes_\Z \R.
\]
As $\tilde L^1 S$ is cocompact in $\tilde L$, that element still determines 
$\log | cc(\sigma) |$. In these terms, the restriction of $\log | cc(\sigma) |$ to 
$\tilde L \cap L_\der$ can be described by restriction from $S$ to $S \cap L_\der$, 
so by an element of $X^* (S \cap L_\der) \otimes_\Z \R$. 
The canonical decomposition \eqref{eq:2.5} provides 
$X^* (S \cap L_\der) \otimes_\Z \R$ with a $W(L,S)$-invariant inner product.

Let $I_P^G (\delta)_\kappa$ be a quasi-standard summand of $I_P^G (\delta)$.
We define
\begin{equation}\label{eq:7.5}
\mc N (I_P^G (\delta)_\kappa) = \mc N (I_P^G (\delta)) = \mc N (\delta) =
\| \log | cc(\sigma) |_{\tilde L \cap L_\der} \| ,
\end{equation}
where the norm comes from the inner product on $X^* (S \cap L_\der) \otimes_\Z \R$.
The $W(L,S)$-invariance of this inner product implies that \eqref{eq:7.5} does 
not depend on the choice of a representative of the cuspidal support of $\delta$.
The invariant $\mc N$ measures the distance from $\delta |_{L_\der}$ to the
parabolic induction of a unitary cuspidal $\tilde L$-representation.

Clearly $|cc (\sigma) |_{\tilde L \cap L_\der}$ depends only on $\delta |_{L_\der}$.
Therefore $\mc N (I_P^G (\delta)_\kappa)$ depends only on $\delta |_{L_\der}$,
which is a direct sum of finitely many square-integrable representations $\delta_1$
\cite[Lemma 2.1 and Proposition 2.7]{Tad}. Then Sc$(\delta_1)$ can be represented by 
a subrepresentation of $\sigma |_{\tilde L \cap L_\der}$. Therefore
$\mc N (I_P^G (\delta)_\kappa)$ can be computed as
\begin{equation}\label{eq:7.6}
\mc N (I_P^G (\delta)_\kappa) = \mc N (I_P^G (\delta)) =
\| \log | cc(\mathrm{Sc}(\delta_1)) | \, \| = \mc N (\delta_1) .
\end{equation}
We note that $(P,L,\delta) \mapsto \mc N (I_P^G (\delta))$ is constant
on $G$-conjugacy classes of induction data, by the $W(G,S)$-invariance of
the inner product on $X^* (S) \otimes_\Z \R$. It is even constant on
$G$-association classes of induction data, because $P$ is inessential
in \eqref{eq:7.5}. This enables us to define, for any standard $\C G$-module
$\pi_{st}$ with irreducible quotient~$\pi$:
\[
\mc N (\pi) := \mc N (\pi_{st}).
\]

\begin{lem}\label{lem:7.3}
\textup{\cite[Lemma 2.12]{Sol1}} \\
Let $I_Q^G (\tau \otimes \nu)$ be a standard $\C G$-module with an
irreducible constituent $\pi$ different from $\mc L (Q,\tau \otimes \nu)$.
Then $\mc N (\pi) > \mc N (I_Q^G (\tau \otimes \nu))$.
\end{lem}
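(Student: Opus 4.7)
Let $\pi$ be an irreducible constituent of the standard module $I_Q^G(\tau \otimes \nu)$ with $\pi \neq \mc L(Q, \tau \otimes \nu)$, and let $\pi_{st} = I_{Q'}^G(\tau' \otimes \nu')$ be its own standard module as furnished by Theorem \ref{thm:7.1}. Since $\pi$ is a subquotient of $I_Q^G(\tau \otimes \nu)$, both standard modules share the same cuspidal support class. My approach is a two-step reduction: first, recast the desired inequality for $\mc N$ as a strict norm inequality between the Langlands characters $\log|\nu|$ and $\log|\nu'|$ via a Pythagorean calculation; second, obtain that strict norm inequality from the Langlands domination theorem together with a convexity argument.

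\textbf{Reduction via Pythagoras.} Choose a representative $(\tilde L_0, \sigma_0 \otimes \chi)$ of the common cuspidal support class with $\sigma_0$ unitary cuspidal, and set $v := \log|cc(\sigma_0 \otimes \chi)| \in X^*(S) \otimes_\Z \R$. The $W(G,S)$-invariance of the inner product makes $\|v\|$ an invariant of the cuspidal support class. Present $I_Q^G(\tau \otimes \nu)$ in the form $I_P^G(\delta)_\kappa$ via \eqref{eq:7.11}, so that $L_\delta = M$. The defining property of $L_\delta$ forces the $A_L \cap M_\der$-component of $v$ to vanish, so that the orthogonal decomposition \eqref{eq:2.5} identifies the $\mc N$-invariant with the norm of $v$ minus its $A_M$-component; temperedness of $\tau$ in turn identifies that $A_M$-component with $\log|\nu|_{A_M}$. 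Thus
\begin{equation*}
\mc N(I_Q^G(\tau \otimes \nu))^2 = \|v\|^2 - \|\log|\nu|_{A_M}\|^2, \qquad
\mc N(\pi)^2 = \|v\|^2 - \|\log|\nu'|_{A_{M'}}\|^2,
\end{equation*}
so the lemma reduces to the strict inequality $\|\log|\nu'|_{A_{M'}}\| < \|\log|\nu|_{A_M}\|$.

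\textbf{Langlands domination and convexity.} After harmless $G$-conjugation of $\pi_{st}$, fix a minimal Levi $M_0 \subset M \cap M'$, so that \eqref{eq:2.5} realizes $X^*(A_M) \otimes_\Z \R$ and $X^*(A_{M'}) \otimes_\Z \R$ as subspaces of the common ambient space $X^*(A_{M_0}) \otimes_\Z \R$, and regard $\log|\nu|, \log|\nu'|$ accordingly. The uniqueness clause of the Langlands classification (Theorem \ref{thm:7.1}(c), cf.\ \cite[\S VII.4]{Ren}) supplies the strict Langlands domination: after a further $W(G, A_{M_0})$-conjugation placing $\nu'$ in the same closed Weyl chamber as $\nu$, the difference $\log|\nu| - \log|\nu'|$ is a \emph{nonzero} nonnegative real combination $\sum_\alpha c_\alpha \alpha^\vee$ of positive coroots of $(G, A_{M_0})$. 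Then
\begin{equation*}
\|\log|\nu|\|^2 - \|\log|\nu'|\|^2 = \langle \log|\nu| - \log|\nu'|,\, \log|\nu| + \log|\nu'|\rangle ,
\end{equation*}
and the right-hand side is strictly positive: $\log|\nu| + \log|\nu'|$ lies in the closed positive chamber (so pairs nonnegatively with every positive coroot), while the strict positivity of $\log|\nu|$ with respect to $Q$ forces $\langle \log|\nu|, \alpha^\vee\rangle > 0$ for every $\alpha$ occurring with $c_\alpha > 0$. The delicate step is the strict Langlands domination itself: one must correctly transport the Langlands data of $\pi_{st}$ into a common framework with that of $I_Q^G(\tau \otimes \nu)$ and verify the nontriviality of $\log|\nu| - \log|\nu'|$. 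This is precisely the content of the uniqueness argument for Langlands data and rests on a standard Jacquet-module and exponent analysis of the subquotients of a standard module.
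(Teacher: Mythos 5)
The paper gives no internal proof of this lemma (it is quoted from \cite[Lemma 2.12]{Sol1}), so the only question is whether your argument stands on its own. Your Pythagorean reduction is essentially correct: writing the standard module as $I_P^G(\delta)_\kappa$ with $L_\delta = M$, the orthogonality of \eqref{eq:2.5}, the triviality of $|cc(\delta)|$ on $Z(L)\cap L_{\delta,\der}$, and the unitarity of the central character of the tempered $\tau$ do give $\mc N(I_Q^G(\tau\otimes\nu))^2=\|v\|^2-\|\log\nu|_{A_M}\|^2$, and likewise for $\pi_{st}$, so the lemma is equivalent to $\|\log\nu'|_{A_{M'}}\|<\|\log\nu|_{A_M}\|$. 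The problem is that the heart of the lemma is exactly the step you defer: the strict domination, i.e.\ that after conjugation into a common closed chamber $\log\nu-\log\nu'$ is a \emph{nonzero} nonnegative combination of positive (co)roots. Theorem \ref{thm:7.1}.c does not contain this: it only says that the Langlands data of an irreducible representation are unique up to conjugacy, and uniqueness by itself does not even give $\log\nu'\preceq\log\nu$, nor does it exclude $\nu'$ conjugate to $\nu$ with a different $\tau'$. What is needed is the exponent/Jacquet-module estimate underlying the proofs of the classification (as in \cite[\S VII.4]{Ren} or Silberger/Borel--Wallach): all exponents of $I_Q^G(\tau\otimes\nu)$ are dominated by $\nu$, and the Langlands quotient is the unique constituent attaining $\nu$. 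You neither prove this nor cite it precisely; you yourself call it ``the delicate step'' and leave it to ``a standard Jacquet-module and exponent analysis''. Since everything else in your argument is routine linear algebra, this unproved step is a genuine gap.

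There is also a flaw in your convexity step as written: positive roots of $(M,A_{M_0})$ pair to zero with $\log\nu$, and nothing you have established excludes them from the combination $\sum_\alpha c_\alpha\alpha^\vee$, so the claim that $\langle\log\nu,\alpha^\vee\rangle>0$ for every $\alpha$ with $c_\alpha>0$ is unjustified. The conclusion survives by a cleaner estimate using only dominance of $\log\nu'$ and nontriviality of the difference:
\[
\|\log\nu\|^2=\|\log\nu'\|^2+2\,\bigl\langle \log\nu',\,\log\nu-\log\nu'\bigr\rangle+\|\log\nu-\log\nu'\|^2>\|\log\nu'\|^2 ,
\]
since $\langle \log\nu',\log\nu-\log\nu'\rangle\geq 0$ when $\log\nu'$ lies in the closed positive chamber and $\log\nu-\log\nu'$ is a nonnegative combination of positive (co)roots. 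With that repair, and with the domination statement invoked from a precise source (or proven via the exponent analysis you allude to), your strategy would yield the lemma and is in the spirit of the cited proof in \cite{Sol1}; as submitted, however, the key ingredient is assumed rather than established.
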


Easier, earlier versions of Lemma \ref{lem:7.3} have been used
to prove that the standard $\C G$-modules form a $\Z$-basis of the
Grothendieck group finite length $G$-representations. We can also use 
Lemma \ref{lem:7.3} to improve on Theorem \ref{thm:7.2}.c.

\begin{lem}\label{lem:7.4}
Suppose that a standard module $\pi_{st}$ with quotient $\pi$ is a direct
summand of $I_P^G (\delta)$, for a positive induction datum $(P,L,\delta)$.
Then $\pi_{st}$ is, up to isomorphism, the only indecomposable summand of
$I_P^G (\delta)$ in which $\pi$ appears.
\end{lem}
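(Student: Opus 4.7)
The plan is to deduce the lemma from the invariant $\mc N$ together with the uniqueness in the Langlands classification. Since $(P,L,\delta)$ is positive, Theorem \ref{thm:7.2}(a) gives a decomposition
\[
I_P^G (\delta) = \bigoplus\nolimits_{\kappa \in \Irr \, \C [R_\delta, \natural_\delta]} \kappa \otimes I_P^G (\delta)_\kappa ,
\]
in which each summand $I_P^G (\delta)_\kappa$ is a standard $\C G$-module. Standard modules are indecomposable, since having a unique irreducible quotient precludes splitting into two nonzero direct summands. Consequently the isotypic pieces $\kappa \otimes I_P^G(\delta)_\kappa$ exhaust, up to reordering, the indecomposable summands of $I_P^G(\delta)$ (after regrouping into Krull--Schmidt components). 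In particular, any indecomposable summand of $I_P^G(\delta)$ in which $\pi$ appears is isomorphic to some $I_P^G (\delta)_{\kappa'}$.

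The next step is to pin down $\kappa'$ using the invariant $\mc N$. By definition of $\mc N$ on quasi-standard modules and its $G$-association invariance, every summand $I_P^G (\delta)_{\kappa'}$ satisfies
\[
\mc N \bigl( I_P^G (\delta)_{\kappa'} \bigr) = \mc N (\delta) = \mc N (\pi_{st}) = \mc N (\pi),
\]
where the last equality uses that $\pi$ is the Langlands quotient of the standard module $\pi_{st}$. Now apply Lemma \ref{lem:7.3} to the standard module $I_P^G (\delta)_{\kappa'}$: every irreducible constituent of $I_P^G(\delta)_{\kappa'}$ other than its unique Langlands quotient has $\mc N$-value strictly larger than $\mc N(I_P^G(\delta)_{\kappa'}) = \mc N(\pi)$. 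Since $\pi$ appears in $I_P^G (\delta)_{\kappa'}$ and $\mc N(\pi)$ matches, $\pi$ must be the unique irreducible quotient of $I_P^G (\delta)_{\kappa'}$.

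To conclude, invoke Theorem \ref{thm:7.1}(d): the assignment from (isomorphism classes of) standard $\C G$-modules to their Langlands quotients is a bijection with $\Irr(G)$. Thus two standard modules sharing the same irreducible quotient $\pi$ must be isomorphic, giving $I_P^G (\delta)_{\kappa'} \cong \pi_{st}$, as claimed. I do not anticipate a serious obstacle here: the argument is essentially a bookkeeping combination of Lemma \ref{lem:7.3}, the indecomposability of standard modules, and the Langlands classification, all of which are already in place. The only point to be careful about is that although distinct $\kappa$ may in principle yield isomorphic summands, the statement is precisely up to isomorphism, so no further analysis of $R_\delta$ is needed.
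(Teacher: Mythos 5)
Your proof is correct and follows essentially the same route as the paper: decompose $I_P^G(\delta)$ via Theorem \ref{thm:7.2}.a into standard summands, compare $\mc N$-values and invoke Lemma \ref{lem:7.3} to force $\pi$ to be the Langlands quotient of any summand containing it, then conclude by the uniqueness in the Langlands classification. Your appeal to Theorem \ref{thm:7.1}.d instead of \ref{thm:7.1}.c and your explicit Krull--Schmidt remark are only cosmetic differences.
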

\begin{proof}
By Theorem \ref{thm:7.2}.a every indecomposable direct summand of
$I_P^G (\delta)$ is a standard module, say $I_Q^G (\tau \otimes \nu)$.
Let $\pi$ be a subquotient of $I_Q^G (\tau \otimes \nu)$.
By definition we have equalities
\[
\mc N (\pi) = \mc N (\pi_{st}) = \mc N (I_P^G (\delta)) =
\mc N (\mc L (Q,\tau \otimes \nu) ).
\]
Lemma \ref{lem:7.3} shows that $\pi$ must be the irreducible quotient of
$I_Q^G (\tau \otimes \nu)$. Then $I_Q^G (\tau \otimes \nu)$ is a
standard module with quotient $\pi$, so by Theorem \ref{thm:7.1}.c
$I_Q^G (\tau \otimes \nu)$ is isomorphic to $\pi_{st}$.
\end{proof}

Next we generalize Lemma \ref{lem:7.4} to not necessarily positive
induction data.

\begin{thm}\label{thm:7.5}
Let $\pi \in \Irr (\C G)$. There exists an induction datum
$(P,L,\delta)$ and $\kappa \in \Irr \, \C [R_\delta,\natural_\delta]$,
unique up to $G$-association, such that:
\begin{itemize}
\item $\pi$ is a constituent of $I_P^G (\delta)_\kappa$,
\item $\mc N (I_P^G (\delta))$ is maximal for the previous property.
\end{itemize}
Moreover, in this case $\mc N (\pi) = \mc N (I_P^G (\delta))$.
\end{thm}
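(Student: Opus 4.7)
My plan is to combine the Langlands classification (Theorem \ref{thm:7.1}) with the quasi-standard machinery of Theorem \ref{thm:7.2} and the monotonicity of $\mc N$ encoded in Lemma \ref{lem:7.3}. To exhibit a datum, I would start from the Langlands standard module $\pi_{st}$ of $\pi$ and invoke Theorem \ref{thm:7.2}.b to write $\pi_{st} \cong I_P^G (\delta)_\kappa$ for a positive induction datum $(P,L,\delta)$ together with some $\kappa \in \Irr \, \C[R_\delta,\natural_\delta]$, unique up to $G$-conjugation. Since $\pi$ is the irreducible quotient of $\pi_{st}$, the first bullet holds, and by the definition of $\mc N(\pi)$ in Paragraph \ref{par:2.2} one has at once $\mc N(I_P^G (\delta)) = \mc N(\pi_{st}) = \mc N(\pi)$.

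Next I would establish maximality. Given any induction datum $(P',L',\delta')$ and $\kappa' \in \Irr \, \C[R_{\delta'}, \natural_{\delta'}]$ with $\pi$ a constituent of $I_{P'}^G (\delta')_{\kappa'}$, I would use \cite[Lemma 2.13]{Sol1} to pass to a $G$-associate positive datum $(P'',L'',\delta'')$ with corresponding $\kappa''$ via the bijection recorded in \eqref{eq:7.4}. Since $\mc N$ is constant on $G$-association classes and, by Lemma \ref{lem:7.7}, the representations $I_{P'}^G (\delta')_{\kappa'}$ and $I_{P''}^G (\delta'')_{\kappa''}$ share the same Jordan--H\"older content, $\pi$ remains a constituent of the standard module $I_{P''}^G (\delta'')_{\kappa''}$. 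Lemma \ref{lem:7.3} then supplies a dichotomy: either $\pi$ is the irreducible quotient of $I_{P''}^G (\delta'')_{\kappa''}$, in which case Theorem \ref{thm:7.1}.c forces $I_{P''}^G (\delta'')_{\kappa''} \cong \pi_{st}$ and therefore $\mc N(I_{P'}^G (\delta')) = \mc N(\pi)$; or $\pi$ is some other constituent and $\mc N(I_{P'}^G (\delta')) < \mc N(\pi)$.

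This simultaneously yields the identity $\mc N(\pi) = \mc N(I_P^G (\delta))$ at the maximum and the uniqueness up to $G$-association: attaining the maximum pins the associated positive datum down to the one recovering $\pi_{st}$, and that positive datum is unique up to $G$-conjugation by Theorem \ref{thm:7.2}.b, so the original datum $(P',L',\delta',\kappa')$ is unique up to $G$-association. The only nontrivial bookkeeping — and the step I would handle most carefully — is tracking $\kappa$ through the passage to a $G$-associate positive datum, but \eqref{eq:7.4} and Lemma \ref{lem:7.7} are tailor-made for exactly this. I do not expect a serious obstacle: the tools of Section \ref{sec:2} are arranged precisely to reduce the problem to the positive case, at which point Lemma \ref{lem:7.3} does the work.
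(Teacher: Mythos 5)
Your argument is correct, but it takes a different route from the paper. The paper's proof of Theorem \ref{thm:7.5} is essentially a reduction: the statement without $\kappa$ is quoted as \cite[Theorem 2.15]{Sol1}, and the refinement with $\kappa$ is then obtained from Lemma \ref{lem:7.4} together with \eqref{eq:7.4}. You instead reprove the whole statement inside the paper's toolkit: you take $\pi_{st} \cong I_P^G(\delta)_\kappa$ via Theorem \ref{thm:7.2}.b, and for an arbitrary competing datum you pass to its positive $G$-associate (\cite[Lemma 2.13]{Sol1}, quoted in Section \ref{sec:2}), transport $\kappa$ and the Jordan--H\"older content via \eqref{eq:7.4} and Lemma \ref{lem:7.7}, and then let Lemma \ref{lem:7.3} plus the uniqueness in Theorem \ref{thm:7.1}.c force the dichotomy $\mc N(I_{P'}^G(\delta')) < \mc N(\pi)$ or ``the positive associate is $\pi_{st}$''. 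This simultaneously yields existence, the value $\mc N(\pi)=\mc N(I_P^G(\delta))$ at the maximum, and uniqueness up to association of the quadruple; in effect you have inlined both the content of \cite[Theorem 2.15]{Sol1} and of Lemma \ref{lem:7.4} (whose proof in the paper runs on exactly the same two ingredients, Lemma \ref{lem:7.3} and Theorem \ref{thm:7.1}.c). What your version buys is self-containedness and transparency of the mechanism — monotonicity of $\mc N$ on non-quotient constituents plus uniqueness of the standard module — at the cost of redoing work the paper delegates to \cite{Sol1}; the paper's version is shorter and keeps the bookkeeping of $\kappa$ isolated in Lemma \ref{lem:7.4}. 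The only points to state explicitly if you write this up are the ones you already gesture at: that $\mc N$ is constant on $G$-association classes (noted after \eqref{eq:7.6}), and that the bijection $\kappa \mapsto \kappa''$ and the notion of association for the pair (datum, $\kappa$) are the ones induced by conjugation as in \eqref{eq:7.4}.
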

\begin{proof}
Without $\kappa$, this is a reformulation of \cite[Theorem 2.15]{Sol1}.
The additional claims about $\kappa$ follow from Lemma \ref{lem:7.4}
and \eqref{eq:7.4}.
\end{proof}

We are ready to characterize standard modules without temperedness
or positivity. We abbreviate the previous $\tau \otimes \nu$ to $\tau'$.

\begin{thm}\label{thm:7.6}
Let $I_Q^G (\tau')$ be a quasi-standard $\C G$-module which has
a unique irreducible quotient $\pi$ and satisfies $\mc N (I_Q^G (\tau'))
= \mc N (\pi)$. Then $I_Q^G (\tau')$ is a standard $\C G$-module.
\end{thm}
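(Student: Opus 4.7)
The plan is to locate, within the $G$-association class of the induction datum behind $I_Q^G(\tau')$, a positive datum whose associated quasi-standard module is actually a standard module with $\pi$ as its irreducible quotient, and then to produce an isomorphism from $I_Q^G(\tau')$ onto that standard module.

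First I would write $I_Q^G(\tau') \cong I_P^G(\delta)_\kappa$ for an induction datum $(P,L,\delta)$ and some $\kappa \in \Irr \, \C[R_\delta,\natural_\delta]$, via the correspondence \eqref{eq:7.11}. Using \cite[Lemma 2.13]{Sol1}, I would produce a $G$-associate positive induction datum $(P_0,L_0,\delta_0)$, with $\kappa_0$ matched to $\kappa$ under the bijection from \eqref{eq:7.4}. Then $I_{P_0}^G(\delta_0)_{\kappa_0}$ is a standard $\C G$-module by Theorem \ref{thm:7.2}(a). Next I would apply Lemma \ref{lem:7.7} to obtain both that $I_P^G(\delta)_\kappa$ and $I_{P_0}^G(\delta_0)_{\kappa_0}$ have the same Jordan--H\"older content and a nonzero $G$-intertwining operator
\[
\phi : I_P^G(\delta)_\kappa \to I_{P_0}^G(\delta_0)_{\kappa_0}.
\]
Since $\mc N$ is constant on $G$-association classes of induction data, I have $\mc N(I_{P_0}^G(\delta_0)) = \mc N(I_P^G(\delta)) = \mc N(\pi)$. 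Because $\pi$ is a constituent of $I_{P_0}^G(\delta_0)_{\kappa_0}$ (same Jordan--H\"older content), Lemma \ref{lem:7.3} applied to this standard module forces $\pi$ to be its unique irreducible quotient, occurring with multiplicity one.

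The main obstacle will be promoting $\phi$ from merely nonzero to an isomorphism. Since $\pi$ has multiplicity one and is the unique irreducible quotient of $I_{P_0}^G(\delta_0)_{\kappa_0}$, there is a unique maximal proper subrepresentation $M \subset I_{P_0}^G(\delta_0)_{\kappa_0}$, and $\pi$ is not a constituent of $M$. If $\phi$ were not surjective, then $\mathrm{im}(\phi)$ would be a nonzero proper subrepresentation, hence contained in $M$; but any irreducible quotient of $\mathrm{im}(\phi)$ is a quotient of $I_P^G(\delta)_\kappa \cong I_Q^G(\tau')$ and hence isomorphic to the unique such quotient $\pi$, placing $\pi$ inside $M$ and contradicting multiplicity one. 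Therefore $\phi$ is surjective, and equality of Jordan--H\"older content then forces $\ker \phi = 0$, so $\phi$ is an isomorphism. This yields $I_Q^G(\tau') \cong I_{P_0}^G(\delta_0)_{\kappa_0}$, which is a standard $\C G$-module.
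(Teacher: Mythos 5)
Your proof is correct, but it takes a genuinely different route from the paper's. The paper first realizes the standard module $\pi_{st}$ of $\pi$ as $I_P^G(\delta)_\kappa$ for a positive induction datum (Theorem \ref{thm:7.2}), then invokes the maximality/uniqueness statement of Theorem \ref{thm:7.5} to conclude that this datum is $G$-associate to the one underlying $I_Q^G(\tau')$, and finally uses Lemma \ref{lem:7.7} to get a nonzero map from $\pi_{st}$ \emph{into} $I_Q^G(\tau')$, which is promoted to an isomorphism using the unique irreducible quotients and multiplicity one on both sides. You bypass Theorem \ref{thm:7.5} entirely: you positivize the datum of $I_Q^G(\tau')$ directly (existence of a positive $G$-associate, \cite[Lemma 2.13]{Sol1}, quoted in Section \ref{sec:2}), note via Theorem \ref{thm:7.2}.a that the resulting summand is standard, and then use the hypothesis $\mc N (I_Q^G(\tau')) = \mc N (\pi)$ together with Lemma \ref{lem:7.3} to force $\pi$ to be the Langlands quotient of that standard module; your intertwining operator from Lemma \ref{lem:7.7} then runs in the opposite direction, from the quasi-standard module into the standard one, and surjectivity needs only the unique-quotient hypothesis on the source plus multiplicity one of $\pi$ in the target, with equal Jordan--H\"older content giving injectivity. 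Your route is more economical for this particular statement, relying only on Lemma \ref{lem:7.3}, Lemma \ref{lem:7.7}, Theorem \ref{thm:7.2} and the existence of positive associates, whereas the paper's route leans on Theorem \ref{thm:7.5} (which it has established anyway, and which in addition pins down the underlying datum up to association). One small caveat, shared with the paper: the assertion that $\pi$ occurs with multiplicity one in the standard module is not literally contained in Lemma \ref{lem:7.3} as stated (it only concerns constituents non-isomorphic to the Langlands quotient); since the paper draws exactly the same conclusion from the same lemma, this is not a gap relative to the paper's own argument.
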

\begin{proof}
By Theorem \ref{thm:7.2}, there exists a positive induction datum
$(P,L,\delta)$ and $\kappa \in \Irr \, \C [R_\delta,\natural_\delta]$
such that $\pi_{st} \cong I_P^G (\delta )_\kappa$. By the definition of
quasi-standard, there exists an induction datum $(P',L',\omega)$ and
$\kappa' \in \Irr \, \C [R_\omega, \natural_\omega]$ such that
$I_Q^G (\tau') \cong I_{P'}^G (\omega )_{\kappa'}$. The condition
$\mc N (I_Q^G (\tau')) = \mc N (\pi)$ and Theorem \ref{thm:7.5} imply that
$I_Q^G (\tau')$ has maximal $\mc N$-value among the quasi-standard
modules with $\pi$ as constituent. As $\mc N (\pi) = \mc N (\pi_{st})$,
the same holds for $I_P^G (\delta)_\kappa$.
By the uniqueness in Theorem \ref{thm:7.5},
$(P,L,\delta,\kappa)$ and $(P',L',\omega,\kappa')$ are $G$-associate.
By Lemma \ref{lem:7.7}, there exists a nonzero $G$-intertwining operator
\[
J : I_P^G (\delta)_\kappa \to I_{P'}^G (\omega)_{\kappa'} .
\]
Let $q : I_P^G (\delta)_\kappa \cong \pi_{st} \to \pi$ and
$q' : I_{P'}^G (\omega)_{\kappa'} \cong I_Q^G (\tau') \to \pi$ be the
quotient maps. The kernel of $J$ is not the whole of $I_P^G (\delta)_\kappa
\cong \pi_{st}$, so it is contained in $\ker q$ (because $\pi$ is the
unique irreducible quotient of $\pi_{st}$). Thus $J$ induces an injection
\[
\pi \cong I_P^G (\delta)_\kappa / \ker q \xrightarrow{\bar J}
I_Q^G (\omega)_{\kappa'} / J (\ker q).
\]
By Lemmas \ref{lem:7.3} and \ref{lem:7.7}, $\pi$ appears with multiplicity one 
in $I_P^G (\delta)_\kappa$ and in $I_Q^G (\omega)_{\kappa'}$. Since
$\bar J$ is injective, $J(\ker q)$ does not contain $\pi \cong \bar J (\pi)$ 
as subquotient. Hence $J(\ker q) \subset \ker q'$ and $q' \circ \bar J$ 
is nonzero. It follows that
the image of $J$ is a subrepresentation of $I_Q^G (\omega)_{\kappa'}$ not
contained in $\ker q'$. As $\pi$ is the unique irreducible quotient of
$I_Q^G (\omega)_{\kappa'}$, $J$ is surjective. Further
$I_P^G (\delta)_\kappa$ and $I_Q^G (\omega)_{\kappa'}$ have the same
Jordan--H\"older content (Lemma \ref{lem:7.7}), so $J$ is an isomorphism.
\end{proof}

\section{The action of the Galois group on representations} \label{sec:3}

Let $\Gal (\C / \Q)$ be the automorphism group of the field extension $\C / \Q$.
Strictly speaking this is not a Galois extension because it is not algebraic,
but for brevity we still speak of the Galois group of this extension.

For $\gamma \in \Gal (\C / \Q)$, let $\C_\gamma$ be $\C$ as $\C$-$\C$-bimodule
with action
\[
z_1 \cdot v \cdot z_2 = z_1 v \gamma (z_2) \qquad z_i \in \C, v \in \C_\gamma .
\]
For a $G$-representation $(\pi,V_\pi)$ we define 
${}^\gamma V_\pi = \C_\gamma \otimes_\C V_\pi$. This means that as 
an abelian group ${}^\gamma V_\pi$ can identified with $V_\pi$, but with the
adjusted scalar multiplication 
\[
z (1 \otimes v) = z \otimes v = 1 \otimes \gamma^{-1}(z) v 
\qquad z \in \C, v \in V_\pi .
\]
\begin{defn}
$\gamma \cdot \pi$ is the $G$-representation on ${}^\gamma V_\pi$ given by 
\[
(\gamma \cdot \pi)(g) (z \otimes v) = z \otimes \pi (g) v \qquad
g \in G, z \in \C_\gamma, v \in V_\pi .
\]
\end{defn}

If $\lambda$ lies in the dual space $V_\pi^*$, then $z \otimes v \mapsto
z \gamma (\lambda (v))$ lies in $({}^\gamma V_\pi)^*$.  
For a matrix coefficient $m_{v,\lambda} : g \mapsto \lambda (\pi (g) v)$ 
of $\pi$, the corresponding matrix coefficient of $\gamma \cdot \pi$ is
$g \mapsto \gamma \big( \lambda (\pi (g) v) \big)$. Thus, for any finite
dimensional representation $\pi'$, $\gamma \cdot \pi'$ can be obtained from 
$\pi'$ by applying $\gamma$ to the matrices that define $\pi'$.

The action of $\Gal (\C / \Q)$ on $G$-representations preserves irreducibility
and cus\-pi\-dality \cite[Theorem 2.3.(1)]{KSV}. In general it does not preserve
unitarity or temperedness, as can already be seen in the case $G = GL_1 (F)$.

It is easy to check that the action of $\Gal (\C / \Q)$ on representations
of $G$ or $L$ commutes with unnormalized parabolic induction. However, that
is not entirely true for normalized parabolic induction. Consider the 
modular function $\delta_P$ of $P = L U_P$. It takes values in 
$q_F^\Z$, where $q_F$ denotes the cardinality of the residue field of $F$.
In particular $\Gal (\C / \Q)$ fixes $\delta_P$. But $\delta_P^{1/2}$ takes
values in $(q_F^{1/2})^\Z$, and if $q_F^{1/2} \notin \Q$, then some
elements of $\Gal (\C / \Q)$ send $q_F^{1/2}$ to $-q_F^{1/2}$. It follows that for
every $\gamma \in \mr{Gal}(\C / \Q)$ there exists a unique quadratic character
$\epsilon_{P,\gamma} : L = P / U_P \to \{ \pm 1 \}$ such that
\[
\gamma \cdot \delta_P^{1/2} = \delta_P^{1/2} \otimes \epsilon_{P,\gamma}.
\]

\begin{prop}\label{prop:3.1}
\textup{\cite{KSV}} 
\enuma{
\item As a character of $L$, $\epsilon_{P,\gamma}$ depends on $L$ and $G$,
but not on the choice of the parabolic subgroup $P$ with Levi factor $L$.
\item The group $N_G (L)$ fixes $\epsilon_{P,\gamma}$.
\item For any $L$-representation $\pi$, there is an isomorphism $\gamma 
\cdot I_P^G (\pi) \cong I_P^G (\gamma \cdot \pi \otimes \epsilon_{P,\gamma})$.
\item For any finite length $L$-representation $(\pi ,V_\pi)$:
\[
J_{P'|P}(\gamma \cdot \pi \otimes \epsilon_{P,\gamma}) (z \otimes v) =
z \otimes J_{P'|P}(\pi)(v) \qquad z \in \C_\gamma, v \in I_P^G (V_\pi) .
\]
\item For any $\pi \in \Irr (L)$, $\mu_{G,L}(\gamma \cdot \pi \otimes 
\epsilon_{P,\gamma}) = \prod_{\alpha \in \Phi (G,A_L)^+} \mu_{L_\alpha,L}
(\gamma \cdot \pi \otimes \epsilon_{L U_\alpha,\gamma})$. 
}
\end{prop}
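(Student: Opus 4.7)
The plan is to exploit the factorization of the modular character along reduced roots of $(G,A_L)$. Since $\mathrm{Lie}(U_P)$ decomposes as an $A_L$-module as $\bigoplus_{\alpha \in \Phi(U_P,A_L)} \bigoplus_{k \geq 1} \mathfrak{g}_{k\alpha}$, we have
\[
\delta_P \;=\; \prod_{\alpha \in \Phi(U_P,A_L)} \delta_{L U_\alpha}^{L_\alpha} \qquad \text{as characters } L \to q_F^\Z .
\]
For a second parabolic $P'$ with Levi factor $L$, the factors for $\alpha \in \Phi(U_P) \cap \Phi(U_{P'})$ cancel, leaving
\[
\delta_P \delta_{P'}^{-1} \;=\; \prod_{\alpha \in \Phi(U_P,A_L) \cap \Phi(U_{\overline{P'}},A_L)} \bigl(\delta_{L U_\alpha}^{L_\alpha}\bigr)^2 .
\]
Hence $(\delta_P \delta_{P'}^{-1})^{1/2}$ takes values in $q_F^\Z \subset \Q^\times$ and is therefore $\gamma$-fixed. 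Writing $\delta_P^{1/2} = (\delta_P \delta_{P'}^{-1})^{1/2} \delta_{P'}^{1/2}$ then gives $\epsilon_{P,\gamma} = \epsilon_{P',\gamma}$, proving (a). Part (b) follows from the conjugation formula $\delta_{n P n^{-1}}(l) = \delta_P(n^{-1} l n)$ for $n \in N_G(L)$, which gives $\epsilon_{n P n^{-1},\gamma}(l) = \epsilon_{P,\gamma}(n^{-1} l n)$; combined with (a), this forces $\epsilon_{P,\gamma}$ to be $N_G(L)$-invariant.

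For (c) I would write $I_P^G(\pi) = \mathrm{Ind}_P^G(\pi \otimes \delta_P^{1/2})$ as a twist of unnormalized parabolic induction. Unnormalized induction manifestly commutes with the Galois action, as it is defined by a function-theoretic condition involving only $\C$-linearity and no choice of square root. Therefore
\[
\gamma \cdot I_P^G(\pi) \;\cong\; \mathrm{Ind}_P^G(\gamma \cdot \pi \otimes \gamma \cdot \delta_P^{1/2}) \;\cong\; \mathrm{Ind}_P^G(\gamma \cdot \pi \otimes \delta_P^{1/2} \otimes \epsilon_{P,\gamma}) \;\cong\; I_P^G(\gamma \cdot \pi \otimes \epsilon_{P,\gamma}) .
\]

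For (d), I would realize all representations involved on the common model $\mathrm{ind}_{P \cap K_0}^{K_0} V_\pi$ used in Section~\ref{sec:1}. The formula \eqref{eq:6.1} for $J_{P'|P}$ is integration against a Haar measure on $U_P \cap U_{P'} \backslash U_{P'}$ that can be normalized to be $\Q$-valued on compact-open subsets; this $\C$-linear operation manifestly commutes with $\otimes_\C \C_\gamma$, and combined with the identification of (c) yields the stated formula whenever the integral converges absolutely. For general $\pi$ the operator must be defined by rational continuation in an auxiliary $\chi \in X_\nr(L)$, and one invokes the construction in \cite[proof of Th\'eor\`eme IV.1.1]{Wal} (also described in \cite{MoTr}) to verify that this continuation is Galois-equivariant. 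This is the main technical step.

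Finally, for (e) I would apply (d) twice. Using $\epsilon_{\overline P,\gamma} = \epsilon_{P,\gamma}$ from (a) and composing,
\[
J_{P|\overline P} J_{\overline P|P}(\gamma \cdot \pi \otimes \epsilon_{P,\gamma})(z \otimes v) \;=\; z \otimes j_{G,L}(\pi) v \;=\; \gamma(j_{G,L}(\pi)) \cdot (z \otimes v),
\]
where the last equality uses the defining relation $z \otimes \lambda v = \gamma(\lambda) z \otimes v$ in $\C_\gamma \otimes_\C V_\pi$. This gives $j_{G,L}(\gamma \cdot \pi \otimes \epsilon_{P,\gamma}) = \gamma(j_{G,L}(\pi))$, and since the rescaling constants $c(G|L), \gamma(G|L) \in \Q_{>0}$ are $\gamma$-invariant, \eqref{eq:1.3} yields $\mu_{G,L}(\gamma \cdot \pi \otimes \epsilon_{P,\gamma}) = \gamma(\mu_{G,L}(\pi))$. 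The same identity holds for each $\mu_{L_\alpha,L}(\gamma \cdot \pi \otimes \epsilon_{L U_\alpha,\gamma})$, so applying $\gamma$ to both sides of the factorization \eqref{eq:1.4} produces the claimed formula.
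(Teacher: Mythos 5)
Your proposal is correct, and for the two parts the paper actually proves it follows the same route: for (e) your computation (apply (d) with $P'=\overline P$, use $z\otimes\lambda v=\gamma(\lambda)\,z\otimes v$ to get $j_{G,L}(\gamma\cdot\pi\otimes\epsilon_{P,\gamma})=\gamma(j_{G,L}(\pi))$, note $c(G|L),\gamma(G|L)\in\Q_{>0}$, then apply $\gamma$ to the factorization \eqref{eq:1.4}) is exactly the paper's argument \eqref{eq:3.7}. The difference is in (a)--(c): the paper simply cites \cite{KSV} (Lemma 5.11 and (5.12) there), whereas you give self-contained proofs. These are sound: the factorization $\delta_P=\prod_{\alpha\in\Phi(U_P,A_L)}\delta_{L U_\alpha}$ into $q_F^\Z$-valued rank-one modular characters, together with $\delta_{L U_{-\alpha}}=\delta_{L U_\alpha}^{-1}$ on $L$ (which you use implicitly when you write the ratio $\delta_P\delta_{P'}^{-1}$ as a product of squares), shows that $\delta_P^{1/2}\delta_{P'}^{-1/2}$ is $\Gal(\C/\Q)$-fixed, whence (a); the conjugation formula plus (a) gives (b); and (c) is the observation, also made in the paper just before the proposition, that unnormalized induction commutes with the Galois twist. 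What your route buys is independence from \cite{KSV} for these three parts.

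The one place where you stop short is (d) outside the range of convergence. Note first that absolute convergence of \eqref{eq:6.1} for $\pi$ does not give convergence for $\gamma\cdot\pi\otimes\epsilon_{P,\gamma}$: $\gamma$ is not continuous and destroys the relevant estimates (the same reason temperedness is not preserved), so the passage through rational continuation in $\chi\in X_\nr(L)$ is genuinely unavoidable even when one side converges. You correctly identify this as the main technical step, but you only assert, rather than verify, that the continuation of \cite[Th\'eor\`eme IV.1.1]{Wal} is Galois-equivariant; a self-contained treatment would use the algebraic construction of the operators (the one valid over any algebraically closed coefficient field of characteristic $\neq p$, cf. \cite{MoTr}), which visibly commutes with field automorphisms once the Haar measures are chosen rational-valued and the identification from (c) is fixed, and then evaluate at $\chi=1$ where $J_{P'|P}(\pi)$ is assumed regular. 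To be fair, the paper itself disposes of (d) in one sentence (``follows directly from part (c) and the definitions''), leaning on \cite{KSV}, so your level of detail matches the source; but if the goal is a proof independent of \cite{KSV}, this equivariance of the continuation is the step that still needs to be written out.
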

\begin{proof}
(a) and (b) are \cite[Lemma 5.11]{KSV} and (c) is \cite[(5.12)]{KSV}.\\
(d) This follows directly from part (c) and the definitions of
$\gamma \cdot I_P^G (\pi)$ and $J_{P'|P}$.\\
(e) Part (d) and the definition of $\mu_{G,L}$ in 
\eqref{eq:6.1}--\eqref{eq:1.3} show that
\begin{equation}\label{eq:3.7}
\begin{aligned}
\mu_{G,L}(\gamma \cdot \pi \otimes \epsilon_{P,\gamma}) & =
J_{P|\bar P} ( \gamma \cdot \pi \otimes \epsilon_{P,\gamma} ) \circ
J_{\bar P |P} ( \gamma \cdot \pi \otimes \epsilon_{P,\gamma} ) \\
& = \gamma \big(  J_{P|\bar P} ( \pi ) \circ 
J_{\bar P |P} ( \pi) \big) = \gamma (\mu_{G,L}(\pi)) .
\end{aligned}
\end{equation}
From \eqref{eq:1.4} and \eqref{eq:3.7} we deduce
\begin{multline*}
\mu_{G,L}(\gamma \cdot \pi \otimes \epsilon_{P,\gamma}) = \gamma (\mu_{G,L}(\pi)) \\
= \gamma \Big( \prod_{\alpha \in \Phi (G,A_L)^+} \mu_{L_\alpha,L} (\pi) \Big) =
\prod_{\alpha \in \Phi (G,A_L)^+} \mu_{L_\alpha,L}
(\gamma \cdot \pi \otimes \epsilon_{L U_\alpha,\gamma}) . \qedhere
\end{multline*}
\end{proof}

\subsection{The Galois action on quasi-standard modules} \

We would like to understand how Gal$(\C / \Q)$ acts on quasi-standard
$\C G$-modules. A crucial step is the following result, which for semisimple
groups over $p$-adic fields is due to Clozel (unpublished).

\begin{thm}\label{thm:3.2}
\textup{\cite[Theorem 4.6]{KSV}} \\
Let $\delta$ be an essentially square-integrable $L$-representation and let
$\gamma \in \Gal (\C / \Q)$. Then $\gamma \cdot \delta$ is also
essentially square-integrable.
\end{thm}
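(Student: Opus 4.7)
The plan is to combine Heiermann's characterization of essentially square-integrable representations as those whose cuspidal support is a residual point of the $\mu$-function (Theorem \ref{thm:6.7}) with the Galois-equivariance of $\mu$-functions (Proposition \ref{prop:3.1}(e)). Schematically, essential square-integrability is a pole-order condition, and pole orders are preserved by field automorphisms applied to the coefficients.

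First I would fix a representative $(M, \sigma \otimes \chi)$ of the cuspidal support of $\delta$, with $\sigma \in \Irr(M)$ cuspidal and $\chi \in X_\nr(M)$, and pick a parabolic $Q' = MU_{Q'} \subset L$ such that $\delta$ is a subquotient of $I_{Q'}^L(\sigma \otimes \chi)$. By Theorem \ref{thm:6.7} applied with $L$ in the role of $G$, essential square-integrability of $\delta$ is equivalent to $\mu_{L,M}$ having a pole of the maximal order $r_M^L := \dim_F A_M - \dim_F A_L$ at $\sigma \otimes \chi$.

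Next I would identify the cuspidal support of $\gamma \cdot \delta$. By Proposition \ref{prop:3.1}(c) (with $L$ in the role of $G$), $\gamma \cdot \delta$ is a subquotient of $I_{Q'}^L\bigl(\gamma \cdot (\sigma \otimes \chi) \otimes \epsilon_{Q',\gamma}\bigr)$. Since the Galois action preserves irreducibility and cuspidality \cite[Theorem 2.3(1)]{KSV}, the representation $\gamma \cdot \sigma \otimes \epsilon_{Q',\gamma}$ is irreducible cuspidal and the cuspidal support of $\gamma \cdot \delta$ is represented by $(M, \gamma \cdot \sigma \otimes \gamma \cdot \chi \otimes \epsilon_{Q',\gamma})$, where $\gamma \cdot \chi \in X_\nr(M)$ is $m \mapsto \gamma(\chi(m))$. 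Applying equation (3.7) (the content of Proposition \ref{prop:3.1}(e)) to $L$ in place of $G$ yields
\[
\mu_{L,M}\bigl(\gamma \cdot \sigma \otimes \gamma \cdot \chi \otimes \epsilon_{Q',\gamma}\bigr) = \gamma\bigl(\mu_{L,M}(\sigma \otimes \chi)\bigr).
\]
Since $\gamma$ is a field automorphism, it carries a pole of order $r_M^L$ on the right-hand side to a pole of order $r_M^L$ on the left (the pole order is an algebraic invariant of a rational function, unaffected by applying $\gamma$ to coefficients). Invoking Theorem \ref{thm:6.7} again, this time in the reverse direction, shows that $\gamma \cdot \delta$ is essentially square-integrable.

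The main obstacle is making the last pole-order step completely rigorous. The function $\mu_{L,M}$ is a rational function on the complex torus $X_\nr(M) \sigma$, and the Galois action moves the point $\sigma \otimes \chi$ as well as transforming the coefficients, so one must interpret $\gamma \cdot \mu_{L,M}$ as the rational function $\chi \mapsto \gamma\bigl(\mu_{L,M}(\gamma^{-1} \cdot \chi)\bigr)$ and verify that its pole order at the translated point equals the original pole order. I would either argue this abstractly, by noting that order of vanishing in a local ring is preserved under field automorphisms of $\C$ acting on the structure sheaf of $X_\nr(M)\sigma$, or concretely by invoking the explicit formula for $\mu_{L,M}$ on $X_\nr(M)\sigma$ (Theorem \ref{thm:6.2} and Lemma \ref{lem:6.10}) and tracking the action of $\gamma$ on the parameters $q_\alpha, q_{\alpha *}$ (which lie in $\R_{>0}$, hence are fixed by $\gamma$) and on the values $\chi(h_\alpha^\vee)$. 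The latter route has the bonus of yielding a concrete description of the cuspidal support of $\gamma \cdot \delta$ in terms of that of $\delta$.
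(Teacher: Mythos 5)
There is a genuine gap, and it sits exactly at the hard point of the theorem. (Note first that the paper does not prove this statement at all: it is imported as \cite[Theorem 4.6]{KSV}, so there is no internal proof to compare with; your attempt must therefore stand on its own.) Your argument uses Theorem \ref{thm:6.7} in both directions, but that theorem is only an \emph{existence} statement: $\mu_{L,M}$ has a pole of maximal order at $\sigma\otimes\chi$ if and only if $I_{Q'}^{L}(\sigma\otimes\chi)$ has \emph{some} essentially square-integrable subquotient. The forward step (the cuspidal support of $\delta$ is a residual point) is fine, and the transport of the pole order under $\gamma$ via Proposition \ref{prop:3.1}.e can indeed be made rigorous along the lines you sketch. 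But the final step only yields that $I_{Q'}^{L}\bigl(\gamma\cdot\sigma\otimes\gamma\cdot\chi\otimes\epsilon_{Q',\gamma}\bigr)$ possesses an essentially square-integrable subquotient; it does not identify that subquotient with $\gamma\cdot\delta$. Already for $L=GL_2(F)$ the induced representation at a residual point has two subquotients, a twist of Steinberg and a character, and only one of them is essentially square-integrable (compare Proposition \ref{prop:2.1}.b, where the two constituents at a pole of $\mu$ play asymmetric roles). Singling out the square-integrable constituent requires a Casselman-type strict positivity of exponents, i.e.\ control of absolute values of central characters, and that is precisely the analytic data which $\Gal(\C/\Q)$ is not known to preserve --- the paper's own discussion around \eqref{eq:3.3} shows that even the weaker statement $\mc N(\gamma\cdot\delta)=\mc N(\delta)$ is open in general, being equivalent to rationality of the parameters $q_\alpha^2,\,q_{\alpha*}^2$. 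So your scheme proves a strictly weaker statement than the theorem, and the missing identification is the actual content of \cite[Theorem 4.6]{KSV}.

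A smaller but real error in the concrete variant you propose: elements of $\R_{>0}$ are \emph{not} fixed by $\Gal(\C/\Q)$ (only $\Q$ is; e.g.\ $q_F^{1/2}$ may be sent to $-q_F^{1/2}$, which is the whole reason the characters $\epsilon_{P,\gamma}$ appear). Hence ``$q_\alpha,q_{\alpha*}$ lie in $\R_{>0}$, hence are fixed by $\gamma$'' is unjustified --- indeed, whether $\gamma$ fixes these parameters is exactly the open question \eqref{eq:3.3}. Your abstract local-ring formulation of the pole-order transport avoids this particular problem, but it does not repair the main gap described above.
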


We preserve the notations from Theorem \ref{thm:3.2}. Recall from 
\eqref{eq:5.5} and \eqref{eq:7.9} that for $r \in R_\delta$ we have 
$J_\delta (r) \in \End_{\C G} (I_P^G (\delta))$, and that these operators
span a twisted group algebra $\C [R_\delta, \natural_\delta]$. 
Theorem \ref{thm:3.2} tells us that $\gamma \cdot \delta \otimes 
\epsilon_{P,\gamma}$ is essentially square-integrable. 
By \cite[Proposition 5.12]{KSV} we have 
$R_{\gamma \delta \otimes \epsilon_{P,\gamma}} = R_\delta$, and we may define
\begin{align*}
& J_{\gamma \delta \otimes \epsilon_{P,\gamma}}(r) \in \End_{\C G} \big(
I_P^G (\gamma \cdot \pi \otimes \epsilon_{P,\gamma}) \big) =
\End_{\C G} \big( \C_\gamma \otimes_{\C,\gamma} I_P^G (\delta) \big) \\
& J_{\gamma \delta \otimes \epsilon_{P,\gamma}}(r) (z \otimes v) =
z \otimes J_\delta (r) (v) .
\end{align*}
By construction $J_\delta (r) J_\delta (r') = \natural_\delta (r,r')
J_\delta (r r')$, which implies that 
\[
J_{\gamma \delta \otimes \epsilon_{P,\gamma}}(r) J_{\gamma \delta \otimes 
\epsilon_{P,\gamma}}(r') = \gamma (\natural_\delta (r,r'))
J_{\gamma \delta \otimes \epsilon_{P,\gamma}}(r r') .
\]
Hence the operators $J_{\gamma \delta \otimes \epsilon_{P,\gamma}}(r)$ span a
twisted group algebra 
\begin{equation}\label{eq:3.1}
\C [R_{\gamma \delta \otimes \epsilon_{P,\gamma}}, \natural_{\gamma \delta 
\otimes \epsilon_{P,\gamma}}] = \C [R_\delta, \gamma \natural_\delta] .
\end{equation}
It is easily seen that there is a canonical bijection 
\[
\Irr (\C [R_\delta,\natural_\delta]) \to \Irr (\C [R_\delta,\gamma 
\natural_\delta]) : \kappa \mapsto \gamma \cdot \kappa .
\]

\begin{lem}\label{lem:7.9}
For any quasi-standard $\C G$-module $I_P^G (\delta )_\kappa$ and any 
$\gamma \in \Gal (\C / \Q)$, there is an isomorphism
\[
\gamma \cdot I_P^G (\delta )_\kappa \cong I_P^G \big( \gamma \cdot \delta 
\otimes \epsilon_{P,\gamma} \big)_{\gamma \cdot \kappa} .
\]
In particular the action of $\Gal (\C / \Q)$ on $\Rep (G)$ 
stabilizes the set of quasi-standard $\C G$-modules.
\end{lem}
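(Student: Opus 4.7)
The plan is to combine Proposition \ref{prop:3.1}(c) with the compatibility between the Galois twist and the projective $R_\delta$-action on $I_P^G(\delta)$ that governs the decomposition \eqref{eq:7.9}. First I would invoke Proposition \ref{prop:3.1}(c) to obtain a $G$-equivariant isomorphism
\[
\Phi : \gamma \cdot I_P^G (\delta) \isom I_P^G \big( \gamma \cdot \delta \otimes \epsilon_{P,\gamma} \big),
\]
and note that by Theorem \ref{thm:3.2}, $\gamma \cdot \delta \otimes \epsilon_{P,\gamma}$ is again essentially square-integrable, so the right-hand side admits its own R-group decomposition of the shape \eqref{eq:7.9}, with $R_{\gamma \delta \otimes \epsilon_{P,\gamma}} = R_\delta$ and twisted algebra $\C[R_\delta, \gamma \natural_\delta]$ by \eqref{eq:3.1}.

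Next I would verify that $\Phi$ intertwines the two projective $R_\delta$-actions. Every $J_\delta (r) \in \End_G \big( I_P^G(\delta) \big)$ transports to an endomorphism of $\gamma \cdot I_P^G(\delta) = \C_\gamma \otimes_\C I_P^G (\delta)$ by $z \otimes v \mapsto z \otimes J_\delta(r)(v)$, and by construction this is exactly the operator $J_{\gamma \delta \otimes \epsilon_{P,\gamma}}(r)$ of \eqref{eq:3.1} under $\Phi$. The identity $J_\delta(r) J_\delta(r') = \natural_\delta(r,r') J_\delta(r r')$ becomes, after moving the scalar through the $\gamma$-twisted tensor factor, $J_{\gamma \delta \otimes \epsilon_{P,\gamma}}(r) J_{\gamma \delta \otimes \epsilon_{P,\gamma}}(r') = \gamma(\natural_\delta(r,r')) J_{\gamma \delta \otimes \epsilon_{P,\gamma}}(r r')$, matching the cocycle $\gamma \natural_\delta$.

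With this compatibility in hand, I apply $\gamma \cdot -$ to the decomposition \eqref{eq:7.9} of $I_P^G(\delta)$ as a $\C[R_\delta,\natural_\delta] \otimes \C G$-module. Since the Galois twist is exact and sends $\kappa \in \Irr \C[R_\delta,\natural_\delta]$ to $\gamma \cdot \kappa \in \Irr \C[R_\delta,\gamma \natural_\delta]$, this yields
\[
\gamma \cdot I_P^G(\delta) \cong \bigoplus_{\kappa} (\gamma \cdot \kappa) \otimes \big( \gamma \cdot I_P^G(\delta)_\kappa \big)
\]
as a $\C[R_\delta,\gamma\natural_\delta] \otimes \C G$-module. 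Transporting this decomposition through $\Phi$ and matching it with the intrinsic R-group decomposition of $I_P^G (\gamma \cdot \delta \otimes \epsilon_{P,\gamma})$, the uniqueness of isotypic components forces
\[
\gamma \cdot I_P^G (\delta)_\kappa \cong I_P^G \big( \gamma \cdot \delta \otimes \epsilon_{P,\gamma} \big)_{\gamma \cdot \kappa},
\]
which is the claim. The "in particular" clause is then immediate.

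The main obstacle is the cocycle-tracking in the second step: one has to be careful that pulling the scalar $\gamma(\natural_\delta(r,r'))$ through the bimodule $\C_\gamma$ reproduces exactly the twisted algebra appearing in \eqref{eq:3.1}. Everything else is a routine exactness argument, and the essential input —that $\gamma \cdot \delta$ remains essentially square-integrable so that an R-group and a decomposition of the induced module even exist on the target side — is precisely Theorem \ref{thm:3.2}.
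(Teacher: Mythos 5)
Your argument is correct, and it uses the same essential inputs as the paper (Proposition \ref{prop:3.1}.c, Theorem \ref{thm:3.2}, and the transported operators spanning $\C[R_\delta,\gamma\natural_\delta]$ as in \eqref{eq:3.1}), but it follows a more direct route. You apply the Galois twist to the $G$-level decomposition \eqref{eq:7.9} and match $\gamma\cdot\kappa$-isotypic components of the semisimple twisted group algebra, using that the twist turns the $\natural_\delta(r,r')$-relations into $\gamma(\natural_\delta(r,r'))$-relations (your cocycle computation is exactly the one in the paper preceding \eqref{eq:3.1}, so that step is sound). The paper instead descends to the Levi subgroup $L_\delta$: it writes $I_P^G(\delta)_\kappa \cong I_{L_\delta P}^G\big(I_{L_\delta\cap P}^{L_\delta}(\delta)_\kappa\big)$ as in \eqref{eq:7.11}, performs the twist on the completely reducible $L_\delta$-representation of \eqref{eq:7.2}, and then reassembles using transitivity of normalized induction together with the factorization $\epsilon_{P,\gamma}=\epsilon_{L_\delta P,\gamma}\,\epsilon_{L_\delta\cap P,\gamma}$ of \eqref{eq:7.16}. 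Your version buys brevity and avoids all the $\epsilon$-bookkeeping and the two-stage induction; the paper's version has the advantage that the isotypic matching only ever happens for a completely reducible representation and that it makes the compatibility with the $\tau\otimes\nu$ presentation \eqref{eq:7.11} explicit, which is what later arguments (e.g.\ positivity of $(P',L,\gamma\cdot\delta\otimes\epsilon_{P,\gamma})$) lean on. One small point you should state explicitly: the isotypic decomposition of $I_P^G(\gamma\cdot\delta\otimes\epsilon_{P,\gamma})$ in the lemma is taken with respect to the operators $J_{\gamma\delta\otimes\epsilon_{P,\gamma}}(r)$, which the paper \emph{defines} by transport through the isomorphism of Proposition \ref{prop:3.1}.c; since the $J$-operators are in any case only canonical up to scalar, the algebra they span and hence the decomposition are well defined, so this is a matter of making the convention explicit rather than a gap.
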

\begin{proof}
One step in the construction of $I_P^G (\delta )_\kappa$ is the representation 
$I_{L_\delta \cap P}^{L_\delta} (\delta)_\kappa$ from \eqref{eq:7.2}, to get
$I_P^G (\delta)_\kappa$ we parabolically induce that. Recall that both parabolic
induction and its normalized version are transitive \cite[Lemme VI.1.4]{Ren}, and 
that an ingredient for the latter is the equality of modular functions
$\delta_P = \delta_{L_\delta P} \delta_{L_\delta \cap P}$. This equality entails that
\begin{equation}\label{eq:7.16}
\epsilon_{P,\gamma} = 
\epsilon_{L_\delta P,\gamma} \, \epsilon_{L_\delta \cap P,\gamma}.
\end{equation}
With Proposition \ref{prop:3.1}.c and \eqref{eq:3.1} we compute
\begin{equation}\label{eq:7.10}
\gamma \cdot I_P^G (\delta)_\kappa \cong I_{L_\delta P}^G \big( \gamma \cdot 
I_{L_\delta \cap P}^{L_\delta} (\delta)_\kappa \otimes \epsilon_{P,\gamma} \big) 
\cong I_{L_\delta P}^G \big( I_{L_\delta \cap P}^{L_\delta} (\gamma \cdot \delta 
\otimes \epsilon_{L_\delta \cap P,\gamma} )_{\gamma \cdot \kappa} \otimes 
\epsilon_{L_\delta P,\gamma} \big) .
\end{equation}
Notice that these expressions are well-defined because $\gamma \cdot \delta 
\otimes \epsilon_{L_\delta \cap P,\gamma}$ is essentially square-integrable
(Theorem \ref{thm:3.2}).
Tensoring by $\epsilon_{L_\delta P,\gamma} \in X_\nr (L_\delta)$ commutes with all 
the operations involved in $I_{L_\delta \cap P}^{L_\delta} (\gamma \cdot \delta 
\otimes \epsilon_{L_\delta \cap P,\gamma} )_{\gamma \cdot \kappa}$. By that and 
\eqref{eq:7.16}, the right hand side of \eqref{eq:7.10} is isomorphic to
\begin{equation}\label{eq:7.15}
I_{L_\delta P}^G \big( I_{L_\delta \cap P}^{L_\delta} (\gamma \cdot \delta
\otimes \epsilon_{P,\gamma} )_{\gamma \cdot \kappa} \big) \cong
I_{L_\delta P}^G \big( \Hom_{\C [R_\delta, \gamma \natural_\delta]}
\big( \gamma \cdot \kappa , I_{L_\delta \cap P}^{L_\delta} (\gamma \cdot \delta 
\otimes \epsilon_{P,\gamma} ) \big) \big) .
\end{equation}
By the transitivity of normalized parabolic induction, the right hand side of 
\eqref{eq:7.15} equals the quasi-standard $\C G$-module $I_P^G \big( \gamma 
\cdot \delta \otimes \epsilon_{P,\gamma} \big)_{\gamma \cdot \kappa}$.
\end{proof}

Recall $\mc N$ from \eqref{eq:7.5}.
Although $\gamma \cdot \delta \otimes \epsilon_{P,\gamma}$ is essentially
square-integrable and $\mc N (\gamma \cdot \delta 
\otimes \epsilon_{P,\gamma} ) = \mc N (\gamma \cdot \delta)$, it is not 
obvious whether $\mc N (\gamma \cdot \delta)$ equals $\mc N (\delta)$
for all $\gamma \in \Gal (\C / \Q)$. Via Theorem \ref{thm:6.7} and Lemma
\ref{lem:6.10}, that can be reduced to the question:
\begin{equation}\label{eq:3.3}
\text{are the numbers } q_\alpha^2, q_{\alpha *}^2 \text{ from Theorem
\ref{thm:6.2} always rational?}
\end{equation}
In \cite{Oha,SolParam} it has been shown that $q_\alpha, q_{\alpha *}$
belong to $(q_F^{1/2})^\Z$ in the large majority of all cases. Nevertheless
there is no general proof for \eqref{eq:3.3}. This means that currently
it is known that $\mc N (\gamma \cdot \delta) = \mc N (\delta)$ for most
essentially square-integrable representations, but at the same time that
remains open in general.

\begin{prop}\label{prop:3.3}
Assume that the action of $\Gal (\C / \Q)$ preserves the $\mc N$-values
of all essentially square-integrable representations of Levi subgroups
of $G$. Then $\Gal (\C / \Q)$ stabilizes the set of standard $\C G$-modules.
\end{prop}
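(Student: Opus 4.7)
The plan is to verify the criterion of Theorem \ref{thm:7.6} for the image $\gamma \cdot \pi_{st}$ of an arbitrary standard $\C G$-module under $\gamma \in \Gal(\C/\Q)$: namely, that $\gamma \cdot \pi_{st}$ is quasi-standard, has a unique irreducible quotient, and carries the same $\mc N$-value as that quotient.

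First, I would write $\pi_{st} \cong I_P^G(\delta)_\kappa$ for a positive induction datum $(P,L,\delta)$ and some $\kappa \in \Irr \, \C[R_\delta, \natural_\delta]$, using Theorem \ref{thm:7.2}.a, denoting the unique irreducible quotient by $\pi$. Lemma \ref{lem:7.9} then identifies $\gamma \cdot \pi_{st}$ with the quasi-standard module $I_P^G(\gamma \cdot \delta \otimes \epsilon_{P,\gamma})_{\gamma \cdot \kappa}$, which settles the first condition. Since the functor $V \mapsto {}^\gamma V$ is an equivalence of categories with inverse given by $\gamma^{-1}$ and preserves irreducibility, it sends the quotient map $\pi_{st} \twoheadrightarrow \pi$ to a quotient map $\gamma \cdot \pi_{st} \twoheadrightarrow \gamma \cdot \pi$, and this is the unique irreducible quotient of $\gamma \cdot \pi_{st}$.

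It remains to match the $\mc N$-values. On the standard-module side, applying the definition \eqref{eq:7.5} to the presentation provided by Lemma \ref{lem:7.9} and using that the quadratic character $\epsilon_{P,\gamma}$ is unitary so does not affect $|cc|$, yields $\mc N(\gamma \cdot \pi_{st}) = \mc N(\gamma \cdot \delta)$. The hypothesis of the proposition then gives $\mc N(\gamma \cdot \delta) = \mc N(\delta) = \mc N(\pi_{st}) = \mc N(\pi)$. On the quotient side, I would derive $\mc N(\gamma \cdot \pi) = \mc N(\pi)$ via the maximality characterization of Theorem \ref{thm:7.5}: if $\gamma \cdot \pi$ is a constituent of some quasi-standard module $I_{P'}^G(\delta')_{\kappa'}$, then applying $\gamma^{-1}$ and Lemma \ref{lem:7.9} produces a quasi-standard module containing $\pi$ whose essentially square-integrable datum is $\gamma^{-1} \cdot \delta' \otimes \epsilon_{P',\gamma^{-1}}$; by the hypothesis applied to $\gamma^{-1}$, this module has the same $\mc N$-value as $I_{P'}^G(\delta')$. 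Taking maxima over all quasi-standard modules containing $\gamma \cdot \pi$ (respectively $\pi$) and repeating the argument symmetrically gives $\mc N(\pi) = \mc N(\gamma \cdot \pi)$. Chaining these equalities yields $\mc N(\gamma \cdot \pi_{st}) = \mc N(\gamma \cdot \pi)$, and Theorem \ref{thm:7.6} then declares $\gamma \cdot \pi_{st}$ to be standard.

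The proof is essentially a formal consequence of three inputs: the intrinsic characterization of standard modules in Theorem \ref{thm:7.6}, the Galois-equivariance of the quasi-standard decomposition in Lemma \ref{lem:7.9}, and the maximality description of $\mc N$ for irreducible $G$-representations in Theorem \ref{thm:7.5}. The hypothesis on $\mc N$-invariance for essentially square-integrable representations does all the real work; the only subtle point is relating $\mc N(\pi)$ to $\mc N(\gamma \cdot \pi)$ rather than just $\mc N(\delta)$ to $\mc N(\gamma \cdot \delta)$, and the symmetric use of $\gamma^{\pm 1}$ in combination with Theorem \ref{thm:7.5} is what dispatches this step cleanly.
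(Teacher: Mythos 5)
Your proposal is correct and follows essentially the same route as the paper: Lemma \ref{lem:7.9} to see that $\gamma\cdot\pi_{st}$ is quasi-standard with unique irreducible quotient $\gamma\cdot\pi$, the hypothesis plus \eqref{eq:7.5} to get $\mc N(\gamma\cdot\pi_{st})=\mc N(\pi_{st})$, the symmetric $\gamma^{\pm 1}$ argument with the maximality in Theorem \ref{thm:7.5} to get $\mc N(\gamma\cdot\pi)=\mc N(\pi)$, and Theorem \ref{thm:7.6} to conclude. The only cosmetic difference is that you phrase the middle step as an equality of maxima over all quasi-standard modules containing $\pi$ respectively $\gamma\cdot\pi$, where the paper uses the corresponding chain of inequalities; you also make explicit the uniqueness of the irreducible quotient of $\gamma\cdot\pi_{st}$, which the paper leaves implicit.
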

\begin{proof}
Consider any quasi-standard $\C G$-module $I_P^G (\delta )_\kappa$. By
\eqref{eq:7.5} and the assumptions:
\begin{multline}\label{eq:3.4}
\mc N \big( I_P^G (\delta )_\kappa \big) = \mc N (\delta) 
= \mc N (\gamma \cdot \delta) = \\ 
\mc N ( \gamma \cdot \delta \otimes \epsilon_{P,\gamma}) =  
\mc N \big( I_P^G (\gamma \cdot \delta \otimes \epsilon_{P,\gamma}) \big) =
\mc N \big( \gamma \cdot I_P^G (\delta )_\kappa \big) .
\end{multline}
Next we consider any standard $\C G$-module $\pi_{st}$, with irreducible
quotient $\pi$. We know from Lemma \ref{lem:7.9} that $\gamma \cdot \pi_{st}$
is a quasi-standard $\C G$-module. By \eqref{eq:3.4} and Theorem \ref{thm:7.5}
we have
\begin{equation}\label{eq:3.5}
\mc N (\gamma \cdot \pi) \geq \mc N ( \gamma \cdot \pi_{st}) = \mc N (\pi_{st})
= \mc N (\pi) .
\end{equation}
We may als apply this to $\gamma^{-1}$ acting on $\gamma \cdot \pi$, then we find
\begin{equation}\label{eq:3.6}
\mc N (\pi) = \mc N (\gamma^{-1} \cdot \gamma \cdot \pi) \geq
\mc N (\gamma \cdot \pi) \geq \mc N (\pi). 
\end{equation}
We conclude that $\mc N (\gamma \cdot \pi)$ equals $\mc N (\pi)$.

From \eqref{eq:3.5} and \eqref{eq:3.6} we see that $\mc N (\gamma \cdot \pi_{st})
= \mc N (\gamma \cdot \pi)$. As $\pi$ is a quotient of $\pi_{st}$, $\gamma
\cdot \pi$ is a quotient of $\gamma \cdot \pi_{st}$. Now we are in the right
position to apply Theorem \ref{thm:7.6}, which guarantees that $\gamma \cdot 
\pi_{st}$ is a standard $\C G$-module.
\end{proof}

\subsection{The Galois action on standard modules} \

We proceed to establish an unconditional version of Proposition \ref{prop:3.3}.
Let $(P,L,\delta)$ be a positive induction datum and let 
$\kappa \in \C [R_\delta,\natural_\delta]$. Recall from Theorem \ref{thm:7.2}
that $I_P^G (\delta )_\kappa$ is a standard $\C G$-module and that every
standard $\C G$-module has this form. Let $M \subset L$ be a Levi subgroup
and let $\sigma \in \Irr (M)$ be such that $(M,\sigma)$ represents the cuspidal
support of $(\delta, V_\delta)$.

We write $\delta = \delta_u \otimes \nu_\delta$ where $\delta_u \in \Irr (L)$
is square-integrable modulo center and $\nu_\delta \in \Hom (L,\R_{>0})$.
We note that $\nu_\delta$ is determined by $\nu_\delta |_{A_L} =
| cc(\delta) |_{A_L}$. Similarly we write $\sigma = \sigma_u \otimes \nu_\sigma$
with $\sigma \in \Irr (M)$ unitary supercuspidal and $\nu_\sigma \in 
\Hom (M,\R_{>0})$. Then $\nu_\sigma$ decomposes as $(\nu_\sigma \nu_\delta^{-1} |_M)
\, \nu_\delta |_M$ where $\nu_\delta |_M$ is trivial on $M \cap L_\der$ and 
$\nu_\sigma \nu_\delta^{-1} |_M$ is trivial on $Z(L)$. 
For $\gamma \in \Gal (\C / \Q)$ we have
\[
\gamma \cdot \delta = (\gamma \cdot \delta )_u \otimes \nu_{\gamma \delta}
\quad \text{with} \quad \nu_{\gamma \delta} |_{A_L} = 
|cc (\gamma \cdot \delta)|_{A_L} = |\gamma \cdot cc (\delta) |_{A_L} ,
\]
and analogously for $\sigma$. Moreover 
\begin{equation}\label{eq:3.8}
\nu_{\gamma \sigma} \nu_{\gamma \delta}^{-1} |_M \text{ is trivial on } Z(L) 
\quad \text{and} \quad \nu_{\gamma \delta} |_M \text{ is trivial on } M \cap L_\der,
\end{equation}
However, in general $(\gamma \cdot \delta)_u \not\cong \gamma \cdot \delta_u$ and
$(\gamma \cdot \sigma)_u \not\cong \gamma \cdot \sigma_u$.

\begin{lem}\label{lem:3.4}
Let $\alpha \in \Phi (G,A_M)$.
\enuma{
\item If $\mu_{M_\alpha,M}(\sigma) = \infty$, then $\mu_{M_\alpha,M}(\gamma \cdot 
\sigma \otimes \epsilon_{M U_\alpha,\gamma}) = \infty$ and 
\[
\langle \alpha^\vee, \log (\nu_\sigma ) \rangle 
\langle \alpha^\vee, \log (\nu_{\gamma \sigma} ) \rangle > 0 .
\]
\item If $\mu_{M_\alpha,M}(\sigma) = 0$, then $\mu_{M_\alpha,M}(\gamma \cdot 
\sigma \otimes \epsilon_{M U_\alpha,\gamma}) = 0$ and
\[
\langle \alpha^\vee, \log (\nu_\sigma ) \rangle =
\langle \alpha^\vee, \log (\nu_{\gamma \sigma} ) \rangle = 0 .
\] 
}
\end{lem}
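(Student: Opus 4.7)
The plan is to derive both assertions in each part from Galois equivariance of the $\mu$-function together with a careful reading of the rank-one picture. The first claim in each of (a) and (b)---that the value of $\mu_{M_\alpha,M}$ at $\gamma \cdot \sigma \otimes \epsilon_{MU_\alpha,\gamma}$ equals its value at $\sigma$---is an immediate consequence of (the argument of) Proposition~\ref{prop:3.1}(e) applied with the ambient group taken to be $M_\alpha$, so that $\Phi(M_\alpha,A_M)^+ = \{\alpha\}$ is a single root; combining parts (c) and (d) of that proposition yields
\[
\mu_{M_\alpha,M}(\gamma \cdot \sigma \otimes \epsilon_{MU_\alpha,\gamma}) = \gamma\big(\mu_{M_\alpha,M}(\sigma)\big),
\]
and $\gamma$ fixes $0$ and $\infty$ in $\C \cup \{\infty\}$.

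For the vanishing assertion in (b), I would appeal directly to Theorem~\ref{thm:6.2}. Writing $\sigma_u = \sigma_0 \otimes \chi_u$ where $\sigma_0$ is the distinguished unitary representative of Theorem~\ref{thm:6.2}(b) and $\chi_u$ is a necessarily unitary unramified character, the identification $\sigma_u \otimes \nu_\sigma = \sigma_0 \otimes (\chi_u \nu_\sigma)$ together with the formula of Theorem~\ref{thm:6.2}(b) puts the condition $\mu_{M_\alpha,M}(\sigma) = 0$ into the form $\chi_u(h_\alpha^\vee) \nu_\sigma(h_\alpha^\vee) = \pm 1$. Taking absolute values and using $|\chi_u(h_\alpha^\vee)| = 1$ forces $\nu_\sigma(h_\alpha^\vee) = 1$, which via the proportionality between $\log \nu(h_\alpha^\vee)$ and $\langle \alpha^\vee, \log \nu \rangle$ means $\langle \alpha^\vee, \log \nu_\sigma\rangle = 0$. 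The same reasoning applied to $\gamma \cdot \sigma$ (whose unitary part $(\gamma\sigma)_u$ is again unitary supercuspidal, and $\nu_{\gamma\sigma \otimes \epsilon_{MU_\alpha,\gamma}} = \nu_{\gamma\sigma}$ since $\epsilon_{MU_\alpha,\gamma}$ takes values in $\{\pm 1\}$) gives $\langle \alpha^\vee, \log \nu_{\gamma\sigma}\rangle = 0$.

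For the sign agreement in (a), I would reduce to a sub-versus-quotient argument that sidesteps any comparison of the Silberger parameters attached to $X_\nr(M)\sigma_u$ and $X_\nr(M)(\gamma\sigma)_u$---which is exactly the obstruction discussed in \eqref{eq:3.3}. Theorem~\ref{thm:6.7} says $\mu_{M_\alpha,M}(\sigma) = \infty$ implies $I_{MU_\alpha}^{M_\alpha}(\sigma)$ has an essentially square-integrable subquotient, so by Proposition~\ref{prop:2.1}(a) the case $\langle \alpha^\vee, \log \nu_\sigma\rangle = 0$ is excluded, and Proposition~\ref{prop:2.1}(b) pins down the essentially square-integrable constituent as the unique subrepresentation when $\langle \alpha^\vee, \log \nu_\sigma\rangle > 0$ and as the unique quotient when $\langle \alpha^\vee, \log \nu_\sigma\rangle < 0$. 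Now apply the exact functor $\gamma \cdot$ and invoke Proposition~\ref{prop:3.1}(c) to identify $\gamma \cdot I_{MU_\alpha}^{M_\alpha}(\sigma)$ with $I_{MU_\alpha}^{M_\alpha}(\gamma\sigma \otimes \epsilon_{MU_\alpha,\gamma})$; the sub/quotient structure is preserved, and by Theorem~\ref{thm:3.2} the essentially square-integrable constituent remains essentially square-integrable in the same position (sub or quotient). Reading off the sign from Proposition~\ref{prop:2.1}(b) applied to this transformed representation then forces $\langle \alpha^\vee, \log \nu_{\gamma\sigma}\rangle$ to carry the same sign as $\langle \alpha^\vee, \log \nu_\sigma\rangle$, giving the required strict positivity of the product.

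The most delicate step is the last one: ensuring that under the identification $\gamma \cdot I_{MU_\alpha}^{M_\alpha}(\sigma) \cong I_{MU_\alpha}^{M_\alpha}(\gamma\sigma \otimes \epsilon_{MU_\alpha,\gamma})$ the "position" of the essentially square-integrable constituent is genuinely preserved. This reduces to the naturality of the isomorphism of Proposition~\ref{prop:3.1}(c) together with the exactness of $\gamma\cdot$, which together transport the length-two short exact sequence with essentially square-integrable sub (resp.\ quotient) to one of the same shape on the $\gamma$-twisted side.
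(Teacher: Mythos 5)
Your proposal is correct and takes essentially the same route as the paper: Galois equivariance of the $\mu$-function (Proposition \ref{prop:3.1}.d/e) gives the values at $\gamma \cdot \sigma \otimes \epsilon_{M U_\alpha,\gamma}$, Theorem \ref{thm:6.2} handles the vanishing of the pairings in part (b), and for part (a) one transports, via exactness of $\gamma\,\cdot$ and Theorem \ref{thm:3.2}, the sub/quotient position of the essentially square-integrable constituent of the corank-one induction and reads off the sign from Proposition \ref{prop:2.1}.b, exactly as the paper does. The only small slip is a citation: excluding $\langle \alpha^\vee, \log \nu_\sigma \rangle = 0$ in (a) follows from Theorem \ref{thm:6.2}.b (as in Case I of the proof of Proposition \ref{prop:2.1}), not from Proposition \ref{prop:2.1}.a, whose hypothesis is $\mu_{M_\alpha,M} \neq \infty$.
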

\begin{proof}
(a) Proposition \ref{prop:3.1}.e says that $\mu_{M_\alpha,M}(\gamma \cdot 
\sigma \otimes \epsilon_{M U_\alpha,\gamma}) = \infty$. By Theorem \ref{thm:3.2},
\[
\gamma \cdot I_Q^G (\sigma) \cong I_Q^G (\gamma \cdot \sigma \otimes 
\epsilon_{M U_\alpha,\gamma}) = I_Q^G \big( (\gamma \cdot \sigma)_u \otimes 
\epsilon_{M U_\alpha,\gamma} \otimes \nu_{\gamma \sigma} \big) 
\]
has the irreducible essentially square-integrable subquotient $\gamma \cdot \delta$.
More precisely, $\gamma \cdot \delta$ is a quotient (resp. a subrepresentation) if
and only if $\delta$ is a quotient (resp. a subrepresentation) of $I_Q^G (\sigma)$.
Now Proposition \ref{prop:2.1} says that $\langle \alpha^\vee, (\log \nu_\sigma ) \rangle$
and $\langle \alpha^\vee, \log (\nu_{\gamma \sigma} ) \rangle$ have the same sign
(which is nonzero by Theorem \ref{thm:6.2}.b).\\
(b) Theorem \ref{thm:6.2} shows that $\langle \alpha^\vee, \log (\nu_\sigma ) 
\rangle = 0$. From Proposition \ref{prop:3.1}.d we see that
\[
\mu_{M_\alpha,M}\big( (\gamma \cdot \sigma)_u \otimes \epsilon_{M U_\alpha,\gamma} 
\otimes \nu_{\gamma \sigma} \big) = \mu_{M_\alpha,M}(\gamma \cdot 
\sigma \otimes \epsilon_{M U_\alpha,\gamma}) = 0. 
\]
Then Theorem \ref{thm:6.2} proves also that 
$\langle \alpha^\vee, \log (\nu_{\gamma \sigma} ) \rangle = 0$.
\end{proof}

Let $Q \subset G$ be a parabolic subgroup with Levi factor $M$, such that
$P = QL$. Recall that $\Phi (G,A_M,X_\nr (M) \sigma)$ is the set of $\alpha \in 
\Phi (G,A_M)$ for which $\mu_{M_\alpha,M}$ is not constant on $X_\nr (M)
\sigma$ (or equivalently has a zero on $X_\nr (M) \sigma$). By 
\cite[Proposition 1.3]{Hei2}, this is a reduced root system in
$X^* (A_M)$. The same holds with $L$ instead of $G$, the crucial point
is that $\sigma$ is cuspidal.

The Weyl group $W \big(\Phi (L,A_M,X_\nr (M)\sigma) \big)$ is contained
in $N_L (M) / M$ and acts on $\Irr (M)$. For any $w_\sigma \in W \big( \Phi
(L,A_M,X_\nr (M)\sigma) \big)$, $I_{Q \cap L}^L (w_\sigma \cdot \sigma)$ has
the same irreducible subquotients as $I_{Q \cap L}^L (\sigma)$, in
particular $\delta$. Furthermore $w_\sigma \cdot \nu_\delta = \nu_\delta$
because $(w_\sigma \cdot \nu_\delta )|_{Z(L)} = \nu_\delta |_{Z(L)}$.

We pick $w_{\sigma}$ such that $w_{\sigma} \cdot \sigma$ lies in
the (closed) positive Weyl chamber for $\Phi (L,A_M,X_\nr (M) \sigma)$,
with respect to the positive roots from $Q \cap L$. Next we replace 
$\sigma$ by $w_\sigma \cdot \sigma$, which is allowed because our main
interest is not $\sigma$ but $\delta$. Thus
\begin{equation}\label{eq:3.9}
\log (\nu_\sigma) \text{ is positive with respect to }
\Phi (L,A_M,X_\nr (M) \sigma) \cap \Phi (U_Q,A_M) ,
\end{equation}
but maybe not with respect to other elements of $\Phi (U_Q,A_M)$.

Recall from Proposition \ref{prop:3.1}.c that
\[
\gamma \cdot I_Q^G (\sigma) \cong I_Q^G (\gamma \cdot \sigma \otimes
\epsilon_{Q,\gamma}) = I_Q^G \big( (\gamma \cdot \sigma)_u \otimes
\epsilon_{Q,\gamma} \otimes \nu_{\gamma \sigma} \big) .
\]
By \eqref{eq:3.7} we have 
\[
\Phi \big( G,A_M, X_\nr (M) (\gamma \cdot \sigma \otimes \epsilon_{Q,\gamma})
\big) = \Phi (G,A_M, X_\nr (M) \sigma) .
\]
We pick a set of positive roots $\Phi (G,A_M,X_\nr (M) \sigma )^{'+}$
such that $\log (\nu_{\gamma \sigma}) \in X^* (A_M) \otimes_\Z \R$ 
lies in the corresponding (closed) positive Weyl chamber. 

For $\alpha \in \Phi (G,A_M,X_\nr (M) \sigma )^{'+}$ we find, using the
definition of $(\alpha |_{A_L})^\vee$ from \eqref{eq:2.5}
\begin{equation}\label{eq:3.10}
\langle \alpha^\vee, \log (\nu_{\gamma \sigma}) \rangle \geq 
\langle (\alpha |_{A_L})^\vee, \log (\nu_{\gamma \sigma}) \rangle =
\langle (\alpha |_{A_L})^\vee, \log (\nu_{\gamma \delta}) \rangle \geq 0 . 
\end{equation}
This enables us to extend $\Phi (G,A_M,X_\nr (M) \sigma )^{'+}$ to
a set of positive roots $\Phi (G,A_M )^{'+}$ of $\Phi (G,A_M)$ such that
\begin{enumerate}[(i)]
\item if $\alpha \in \Phi (G,A_M )^{'+} \cap \Phi (L,A_M )$, then
$\langle \alpha^\vee, \log (\nu_{\gamma \sigma}) \rangle \geq 0$,
\item if $\alpha \in \Phi (G,A_M )^{'+}, \alpha \notin \Phi (L,A_M )$,
then $\langle (\alpha |_{A_L} )^\vee, \log (\nu_{\gamma \delta}) \rangle 
\geq 0$.
\end{enumerate}
Let $Q' \subset G$ be the parabolic subgroup with Levi factor $M$ and
\begin{equation}\label{eq:3.11}
\Phi (U_{Q'}, A_M) = \Phi (G,A_M )^{'+} .
\end{equation}
Then (ii) says that
\begin{equation}\label{eq:3.12}
\gamma \cdot \delta \text{ and } \log (\nu_{\gamma \delta})
\text{ are positive with respect to } Q' L .
\end{equation}

\begin{lem}\label{lem:3.5}
$J'_{Q'L | QL}(\delta) : I_{QL}^G (\delta) \to I_{Q' L}^G (\delta)$
is an isomorphism.
\end{lem}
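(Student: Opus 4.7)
The plan is to apply Theorem~\ref{thm:1.7} with $\pi = \delta$ (viewed as a subquotient of $I_{Q \cap L}^L(\sigma)$) and the parabolic pair $(P,P') = (QL, Q'L)$. This immediately gives the isomorphism $J'_{Q'L|QL}(\delta)$ of the lemma, once the hypothesis of Theorem~\ref{thm:1.7} is verified: $\mu_{M_\alpha,M}(\sigma) \ne \infty$ for every $\alpha \in \Phi(U_{QL},A_M) \cap \Phi(U_{\overline{Q'L}},A_M)$.

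Suppose for contradiction that some $\alpha$ in this intersection has $\mu_{M_\alpha,M}(\sigma) = \infty$. By Lemma~\ref{lem:3.4}(a), both $\langle \alpha^\vee, \log \nu_\sigma\rangle$ and $\langle \alpha^\vee, \log \nu_{\gamma\sigma}\rangle$ are nonzero and have the same sign. On the other hand, $\alpha \notin \Phi(L,A_M)$, $\alpha|_{A_L} \in \Phi(U_{QL},A_L)$, and $-\alpha|_{A_L} \in \Phi(U_{Q'L},A_L)$; the positivity of $(QL,L,\delta)$ together with \eqref{eq:3.12} applied to $-\alpha$ then yield
\[
\langle (\alpha|_{A_L})^\vee, \log \nu_\delta\rangle \geq 0 \qquad\text{and}\qquad \langle (\alpha|_{A_L})^\vee, \log \nu_{\gamma\delta}\rangle \leq 0.
\]

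The core step is to transfer these sign constraints from $(\alpha|_{A_L})^\vee$-pairings to $\alpha^\vee$-pairings. Using the decomposition \eqref{eq:2.5}, we split $\alpha^\vee = \alpha^\vee_{L_\der} + (\alpha|_{A_L})^\vee$ in $X_* (A_M) \otimes \R$. Since $\delta$ is a constituent of $I_{Q\cap L}^L(\sigma)$, it has central character $cc(\sigma)|_{Z(L)}$, so $\nu_\sigma|_{A_L} = \nu_\delta|_{A_L}$; an analogous equality holds after applying $\gamma$. Consequently
\[
\langle \alpha^\vee, \log \nu_\sigma\rangle = \langle \alpha^\vee_{L_\der}, \log \nu_\sigma\rangle + \langle (\alpha|_{A_L})^\vee, \log \nu_\delta\rangle,
\]
and similarly for $\gamma \cdot \sigma$ and $\gamma \cdot \delta$. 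Using the choice \eqref{eq:3.9} of $\sigma$ in the closed positive chamber for $\Phi(L,A_M,X_\nr(M)\sigma) \cap \Phi(U_{Q\cap L},A_M)$, and the analogous positivity of $\log \nu_{\gamma\sigma}$ built into the construction of $Q'$ via condition~(i), one concludes that $\langle \alpha^\vee_{L_\der}, \log \nu_\sigma\rangle \geq 0$ and $\langle \alpha^\vee_{L_\der}, \log \nu_{\gamma\sigma}\rangle \leq 0$. Combining the two ingredients gives $\langle \alpha^\vee, \log \nu_\sigma\rangle \geq 0$ and $\langle \alpha^\vee, \log \nu_{\gamma\sigma}\rangle \leq 0$, contradicting the strict same-sign conclusion of Lemma~\ref{lem:3.4}(a).

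The main obstacle will be controlling the $L_\der$-components $\langle \alpha^\vee_{L_\der}, \log \nu_\sigma\rangle$ and $\langle \alpha^\vee_{L_\der}, \log \nu_{\gamma\sigma}\rangle$, because the positivity data \eqref{eq:3.9} is phrased in terms of simple coroots of the subsystem $\Phi(L,A_M,X_\nr(M)\sigma)$, while $\alpha$ itself lies outside $\Phi(L,A_M)$. This step requires expressing $\alpha^\vee_{L_\der}$ as a non-negative combination of such simple coroots via the root-system structure of $\Phi(G,A_M,X_\nr(M)\sigma)$, which is applicable precisely because $\mu_{M_\alpha,M}(\sigma) = \infty$ forces $\alpha \in \Phi(G,A_M,X_\nr(M)\sigma)$ by Theorem~\ref{thm:6.2} and Lemma~\ref{lem:1.3}.
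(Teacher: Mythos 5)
Your overall skeleton is the same as the paper's: reduce the lemma to the hypothesis of Theorem \ref{thm:1.7} for the pair $(QL,Q'L)$ and show that no root $\alpha$ with $\mu_{M_\alpha,M}(\sigma)=\infty$ lies in $\Phi(U_{QL},A_M)\cap\Phi(U_{\overline{Q'L}},A_M)$, by playing Lemma \ref{lem:3.4}.a against the way $\sigma$ and $Q'$ were positioned. But the step you yourself flag as the ``main obstacle'' is a genuine gap, and the mechanism you propose to close it is false. You need $\langle \alpha^\vee_{L_\der},\log\nu_\sigma\rangle \geq 0$ and $\langle \alpha^\vee_{L_\der},\log\nu_{\gamma\sigma}\rangle \leq 0$, and you propose to obtain them by writing $\alpha^\vee_{L_\der}$ as a non-negative combination of the simple coroots of $\Phi(L,A_M,X_\nr(M)\sigma)$ positive for $Q\cap L$. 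For $\alpha\notin\Phi(L,A_M)$ the component of $\alpha^\vee$ in $X_*(A_M\cap L_\der)\otimes_\Z\R$ is the orthogonal projection onto the coroot span of $\Phi(L,A_M)$, and this is typically a \emph{non-positive} combination of the positive coroots: already for $G=GL_4$, $L=GL_2\times GL_2$, $M$ the diagonal torus and $\alpha=e_2-e_3$ (exactly the kind of root in $\Phi(U_{QL},A_M)$ your argument must handle) one computes $\alpha^\vee_{L_\der}=-\tfrac{1}{2}(e_1-e_2)^\vee-\tfrac{1}{2}(e_3-e_4)^\vee$. So the inequality for $\nu_\sigma$ does not follow from \eqref{eq:3.9}. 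Moreover the two inequalities you need have \emph{opposite} signs for the same fixed vector $\alpha^\vee_{L_\der}$ paired against two vectors that are each dominant for a positive system of (a subsystem of) $\Phi(L,A_M)$, so a single ``non-negative combination'' statement could never deliver both; and condition (i) together with \eqref{eq:3.12}, which only see roots of $L$ and $A_L$-restrictions respectively, give no control whatsoever on $\langle\alpha^\vee_{L_\der},\log\nu_{\gamma\sigma}\rangle$.

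The paper's proof never decomposes $\alpha^\vee$ and never routes through $\nu_\delta$, $\nu_{\gamma\delta}$. For $\alpha\in\Phi(U_Q,A_M)$ with $\mu_{M_\alpha,M}(\sigma)=\infty$ it invokes \eqref{eq:3.9} to get $\langle\alpha^\vee,\log\nu_\sigma\rangle\geq 0$, then Lemma \ref{lem:3.4}.a to make both $\langle\alpha^\vee,\log\nu_\sigma\rangle$ and $\langle\alpha^\vee,\log\nu_{\gamma\sigma}\rangle$ strictly positive, and then the defining chamber condition on $\Phi(G,A_M,X_\nr(M)\sigma)^{'+}$ --- namely that $\log\nu_{\gamma\sigma}$ lies in its closed positive chamber, a full $A_M$-level dominance statement that your argument never uses --- to conclude $\alpha\in\Phi(G,A_M,X_\nr(M)\sigma)^{'+}\subset\Phi(U_{Q'},A_M)$, hence $\alpha\notin\Phi(U_{\overline{Q'}},A_M)$. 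Thus every bad root is excluded from $\Phi(U_{QL},A_M)\cap\Phi(U_{\overline{Q'L}},A_M)$ and Theorem \ref{thm:1.7} applies; there is no contradiction argument and no transfer between $A_M$- and $A_L$-levels. To repair your write-up you should replace the $\alpha^\vee_{L_\der}$/$(\alpha|_{A_L})^\vee$ bookkeeping by this direct use of \eqref{eq:3.9} and of the dominance of $\log\nu_{\gamma\sigma}$ with respect to $\Phi(G,A_M,X_\nr(M)\sigma)^{'+}$.
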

\begin{proof}
We take $\sigma$ and $\Phi (G,A_M,X_\nr (M) \sigma )^{'+}$ as above.
Suppose that $\alpha \in \Phi (U_Q,A_M)$ and $\mu_{M_\alpha,M}(\sigma) =
\infty$. Then $\langle \alpha^\vee, \log (\nu_\sigma ) \rangle \geq 0$ by
\eqref{eq:3.9}. Lemma \ref{lem:3.4}.a says that 
\[
\langle \alpha^\vee, \log (\nu_\sigma) \rangle > 0 \text{ and }
\langle \alpha^\vee, \log (\nu_{\gamma \sigma}) \rangle > 0.
\]
Now \eqref{eq:3.12} guarantees that
\[
\alpha \in \Phi (G,M, X_\nr (M) \sigma )^{'+} \subset 
\Phi (G,A_M )^{'+} = \Phi (U_Q,A_M) ,
\]
and in particular $\alpha \not\in \Phi (U_{\overline{Q'}},A_M)$.
As $U_{QL} \subset U_Q$ and $U_{Q' L} \subset U_{Q'}$, we find 
that $\mu_{M_\alpha,M}(\sigma) \neq \infty$ for all $\alpha \in 
\Phi (U_{QL},A_M) \cap \Phi (U_{\overline{Q'}},A_M)$.
Now Theorem \ref{thm:1.7} says that $J'_{Q'L | QL}(\delta)$ is
an isomorphism.
\end{proof}

In addition to $P = QL$, we write $P' = Q' L$ with $Q'$ as in \eqref{eq:3.11}.

\begin{thm}\label{thm:7.11}
Let $(P,L,\delta)$ be a positive induction datum and let 
$\gamma \in \Gal (\C / \Q)$.
\enuma{
\item $\gamma \cdot I_P^G (\delta) \cong I_{P'}^G (\gamma \cdot \delta \otimes
\epsilon_{P,\gamma})$ and $(P',L,\gamma \cdot \delta \otimes \epsilon_{P,\gamma})$
is a positive induction datum.
\item For any $\kappa \in \Irr ( \C [R_\delta,\natural_\delta])$, there exists
$\kappa' \in \Irr ( \C [R_\delta, \natural_{\gamma \cdot \delta} \otimes
\epsilon_{P,\gamma}])$ such that 
\[
\gamma \cdot I_P^G (\delta )_\kappa \cong
I_{P'}^G (\gamma \cdot \delta \otimes \epsilon_{P,\gamma})_{\kappa'}.
\]
\item The action on $\Gal (\C / \Q)$ on $\Rep ( G)$ stabilizes 
the set of standard $\C G$-modules.
}
\end{thm}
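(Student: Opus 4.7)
The plan is to derive the three parts in sequence, with part (a) doing the main work, part (b) refining it to indecomposable summands, and part (c) following at once from Theorem \ref{thm:7.2}.

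For (a), I would start from Lemma \ref{lem:3.5}, which gives an isomorphism $J'_{P'|P}(\delta) : I_P^G(\delta) \isom I_{P'}^G(\delta)$. Applying the Galois functor $\gamma \cdot (-)$ produces $\gamma \cdot I_P^G(\delta) \cong \gamma \cdot I_{P'}^G(\delta)$, and Proposition \ref{prop:3.1}(c) rewrites both sides as $I_P^G(\gamma \cdot \delta \otimes \epsilon_{P,\gamma})$ and $I_{P'}^G(\gamma \cdot \delta \otimes \epsilon_{P',\gamma})$ respectively. Proposition \ref{prop:3.1}(a) identifies $\epsilon_{P,\gamma}$ with $\epsilon_{P',\gamma}$, completing the isomorphism claim. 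For positivity, note that $\epsilon_{P,\gamma}$ takes values in $\{\pm 1\}$, hence $|cc(\gamma \cdot \delta \otimes \epsilon_{P,\gamma})| = |cc(\gamma \cdot \delta)| = \nu_{\gamma \delta}$; the very purpose of constructing $Q'$ in \eqref{eq:3.11} was to achieve \eqref{eq:3.12}, which says exactly that $\log \nu_{\gamma \delta}$ pairs non-negatively with the coroots of $(P', A_L)$.

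For (b), I would invoke Lemma \ref{lem:7.9} to get $\gamma \cdot I_P^G(\delta)_\kappa \cong I_P^G(\gamma \cdot \delta \otimes \epsilon_{P,\gamma})_{\gamma \cdot \kappa}$, and then push this through the isomorphism of (a). That isomorphism is realized by the normalized operator $J'_{P'|P}(\gamma \cdot \delta \otimes \epsilon_{P,\gamma})$, which commutes with the projective action of $R_{\gamma \cdot \delta \otimes \epsilon_{P,\gamma}} = R_\delta$ that yields the decomposition \eqref{eq:7.9} on each side. Consequently it sends the $\gamma \cdot \kappa$-summand isomorphically onto a summand $I_{P'}^G(\gamma \cdot \delta \otimes \epsilon_{P,\gamma})_{\kappa'}$; taking $\kappa'$ to be the image of $\gamma \cdot \kappa$ under the canonical bijection of R-group representations supplies the desired isomorphism. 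Part (c) is then immediate: any standard $\C G$-module can by Theorem \ref{thm:7.2}(b) be written as $I_P^G(\delta)_\kappa$ with $(P, L, \delta)$ positive; part (b) identifies $\gamma$ applied to it with $I_{P'}^G(\gamma \cdot \delta \otimes \epsilon_{P,\gamma})_{\kappa'}$; part (a) certifies this induction datum as positive; and Theorem \ref{thm:7.2}(a) concludes that $\gamma \cdot \pi_{st}$ is again a standard module, so $\Gal(\C/\Q)$ stabilizes the class.

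The substantive obstacle was already absorbed into the construction preceding Lemma \ref{lem:3.5}: the cuspidal-support representative $\sigma$ had to be moved into the correct Weyl chamber for $\Phi(L, A_M, X_\nr(M)\sigma)$, and the parabolic $Q'$ had to be chosen so that every root $\alpha \in \Phi(G, A_M)$ with $\mu_{M_\alpha, M}(\sigma) = \infty$ would be excluded from $\Phi(U_Q, A_M) \cap \Phi(U_{\overline{Q'}}, A_M)$, exploiting Lemma \ref{lem:3.4} together with the corank-one sign asymmetry of Proposition \ref{prop:2.1}. Once Lemma \ref{lem:3.5} holds, Theorem \ref{thm:7.11} is essentially a bookkeeping argument keeping track of the sign characters $\epsilon_{P,\gamma}$, positivity with respect to the new parabolic, and transport of the R-group decomposition along $J'_{P'|P}$.
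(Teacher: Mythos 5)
Your proposal is correct and follows essentially the same route as the paper: part (a) rests on Lemma \ref{lem:3.5} together with Proposition \ref{prop:3.1} (the paper phrases this by showing directly that $J'_{P'|P}(\gamma\cdot\delta\otimes\epsilon_{P,\gamma})$ acts as $z\otimes v\mapsto z\otimes J'_{P'|P}(\delta)v$, which is the same computation as applying the functor $\gamma\cdot(-)$ to the isomorphism), positivity comes from \eqref{eq:3.12}, and part (c) is the same appeal to Theorem \ref{thm:7.2}. The only cosmetic difference is in (b), where the paper argues via uniqueness of indecomposable direct summands of $I_{P'}^G(\gamma\cdot\delta\otimes\epsilon_{P,\gamma})$ rather than transporting the $R$-group decomposition along $J'_{P'|P}$ as you do; both variants work and rely on the same ingredients (Lemma \ref{lem:7.9}, \eqref{eq:3.1}, Theorem \ref{thm:7.2}.a).
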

\begin{proof}
(a) From Proposition \ref{prop:3.1}.c we know that 
\[
\gamma \cdot I_P^G (\delta) \cong I_P^G (\gamma \cdot \delta \otimes
\epsilon_{P,\gamma}) .
\]
By Lemma \ref{lem:3.5} $J'_{P'|P}(\delta) : I_P^G (\delta) \to I_{P'}^G (\delta)$
is an isomorphism. The operator $J'_{P'|P}(\delta)$ is a normalized version of
$J_{P'|P}(\delta)$, and Proposition \ref{prop:3.1}.c entails that 
\[
J'_{P'|P}(\gamma \cdot \delta \otimes \epsilon_{P,\gamma}) (z \otimes v) =
z \otimes J'_{P' | P}(\delta) v \qquad z \in \C_\gamma, v \in I_P^G (V_\delta) .
\]
It follows that
\[
J'_{P'|P}(\gamma \cdot \delta \otimes \epsilon_{P,\gamma}) :
I_P^G (\gamma \cdot \delta \otimes \epsilon_{P,\gamma}) \to
I_{P'}^G (\gamma \cdot \delta \otimes \epsilon_{P,\gamma})
\]
is an isomorphism. From Theorem \ref{thm:3.2} we see that 
$(P',L,\gamma \cdot \delta \otimes \epsilon_{P,\gamma})$ is an induction datum,
and \eqref{eq:3.12} says that it is positive.\\
(b) Recall from \eqref{eq:7.9} and Theorem \ref{thm:7.2}.a that every indecomposable
direct summand of $I_P^G (\delta)$ is isomorphic to $I_P^G (\delta)_\kappa$
for some $\kappa \in \Irr ( \C [R_\delta,\natural_\delta])$. By part (a) that
applies also to $\gamma \cdot I_P^G (\delta)$, and with \eqref{eq:3.1} we can
simplify it a little to
\begin{equation}\label{eq:3.13}
I_{P'}^G (\gamma \cdot \delta \otimes \epsilon_{P,\gamma}) \cong 
\bigoplus\nolimits_{\kappa' \in \Irr ( \C [R_\delta, \natural_{\gamma \cdot \delta} 
\otimes \epsilon_{P,\gamma}])} \kappa' \otimes I_{P'}^G (\gamma \cdot \delta \otimes 
\epsilon_{P,\gamma} )_{\kappa'} .
\end{equation}
As $I_P^G (\delta )_\kappa$ is isomorphic to an indecomposable direct summand of
$I_P^G (\delta )$, the representation $\gamma \cdot I_P^G (\delta)_\kappa$ is
isomorphic to an indecomposable direct summand of 
\[
\gamma \cdot I_P^G (\delta) \cong 
I_{P'}^G (\gamma \cdot \delta \otimes \epsilon_{P,\gamma}) .
\]
By \eqref{eq:3.13}, the latter has the form
$I_{P'}^G (\gamma \cdot \delta \otimes \epsilon_{P,\gamma})_{\kappa'}$ for
some $\kappa'$.\\
(c) Recall from Theorem \ref{thm:7.2}.b that every standard $\C G$-module has the
form $I_P^G (\delta )_\kappa$. By parts (a) and (b) and Theorem \ref{thm:7.2}.a, 
$\gamma \cdot I_P^G (\delta)_\kappa$ is (isomorphic to) a standard $\C G$-module. 
\end{proof}

\end{document}